\newcommand{\mb}[1]{\mathbf{#1}}
\newcommand{\mbb}[1]{\mathbb{#1}}
\newcommand{\mcl}[1]{\mathcal{#1}}
\newcommand{\PP}{\mathbb{P}}
\newcommand{\EE}{\mathbb{E\:}}
\newcommand{\VV}{\mathbf{Var}}
\newcommand{\reals}{\mathbb{R}}
\newcommand{\complex}{\mathbb{C}}
\newcommand{\integers}{\mathbb{N}}
\newcommand{\dd}{\text{d}}
\newcommand{\dequal}{\stackrel{d}{=}}
\DeclareMathOperator*{\argmin}{arg\,min}
\definecolor{myorange}{rgb}{.8,.0,.0}
\newcommand{\myhl}[1]{{{{#1}}}}
\newtheorem{example}{Example}
 \newtheorem{assumption}{Assumption}
\title{Well-posed Bayesian Inverse Problems with Infinitely-Divisible
  and Heavy-Tailed
 Prior Measures
  \thanks{This work was supported in part by the Natural Sciences and
    Engineering Council of Canada.}}
 \author{Bamdad Hosseini \footnotemark[2]}
\begin{document}
\today


\maketitle

\newcommand{\slugmaster}{%

\slugger{}{xxxx}{xx}{x}{x--x}}
\renewcommand{\thefootnote}{\fnsymbol{footnote}}
\footnotetext[2]{Department of mathematics, Simon Fraser University,
  8888 University Drive, Burnaby, BC, V5A 1S6, Canada
  (\email{bhossein@sfu.ca}).}
\renewcommand{\thefootnote}{\arabic{footnote}}

\begin{abstract}
We
present
 a new class of prior measures 
in connection to $\ell_p$ regularization techniques when $p \in(0,1)$
which is based on the generalized Gamma distribution. We show that
the resulting prior measure is heavy-tailed, non-convex and infinitely divisible.
Motivated by this observation we discuss the class of infinitely divisible prior measures
and draw a connection between their
tail behavior and the tail behavior of
their L{\'evy} measures. \myhl{Next, we use the laws of pure jump L{\'e}vy processes in order to 
define new classes of prior measures that are concentrated on the space of functions with bounded 
variation. These priors serve as an alternative to the classic total variation prior and result in well-defined inverse problems.   
 We then study the well-posedness of Bayesian
inverse problems in a general enough setting that encompasses the above mentioned classes of prior measures. 
 We establish that
well-posedness relies on a balance between the growth of the
log-likelihood function and the tail behavior of the prior and apply
our results to special cases such as additive noise
models and linear problems. }
 Finally, we discuss some of the practical aspects
of Bayesian inverse problems such as their consistent approximation and present
three concrete examples of well-posed Bayesian inverse problems with 
heavy-tailed or stochastic process prior measures. 
\end{abstract}

\begin{keywords}
Inverse problems, Bayesian, Infinitely divisible,
non-Gaussian, Bounded variation.
\end{keywords}

\begin{AMS}
35R30, 
 62F99, 
	60B11. 
\end{AMS}

\pagestyle{myheadings}
\thispagestyle{plain}

%
%
%
\section{Introduction}\label{sec:introduction}
\myhl{ Gaussian prior measures are perhaps the most commonly used class of priors in infinite-dimensional 
Bayesian inverse problems. While the Gaussian class is very conveninet to use in both theory and practice, it has serious shortcomings in modelling of certain types of prior knowledge such as sparsity. In this article we introduce some non-Gaussian prior measures that are able to model parameters 
that are compressible or have jump discontinuities. We will discuss our goals in more detail after a brief introduction to the Bayesian framework for solution of inverse problems.}

Consider  the problem of estimating a parameter $u \in
X$ from a set of measurements $y
\in Y$ where both $X$ and $Y$ are Banach spaces and $y$ is associated with
$u$ through a model of the form 
 \begin{equation}
   \label{generic-inverse-problem}
   y = \tilde{\mcl{G}}(u).
 \end{equation} 
$\tilde{\mcl{G}}$ is a generic stochastic mapping that models the
relationship between the parameter and the observed data by taking the
measurement noise into account (be it additive, multiplicative
etc). 
 As an
example, if the measurement noise is additive then we can write
$$
\tilde{\mcl{G}}(u) = \mcl{G}(u) + \eta
$$
where $\mcl{G}: X  \mapsto Y$ is the (deterministic) {\it forward model}
and $\eta$ is the (random) measurement noise which is independent of
$u$. We want to estimate the parameter $u$ given a
 realization of $y$. Since the map $\mcl{G}$ may not be stably invertible
this problem is in
general ill-posed. 

Here we consider the Bayesian
framework for solution of such ill-posed problems. 
Recall the infinite-dimensional version of {\it
   Bayes' rule} \cite{stuart-acta-numerica} which is
understood in the sense of the Radon-Nikodym theorem \cite[Thm.~3.2.2]{bogachev1}:
\begin{equation}
  \label{bayes-rule}
  \frac{ \dd \mu^y }{\dd \mu_0} (u) = \frac{1}{Z(y)} \exp \left( -
    \Phi(u;y) \right) \quad  \text{where} \quad Z(y) = \int_X \exp(-\Phi(u; y)) \dd \mu_0(u).
\end{equation}
Here $\mu_0$ is the {\it prior measure} which reflects our
prior knowledge of the parameter $u$, $\Phi(u;y)$ is the {\it likelihood
  potential} that can be thought of as the negative log of the
density of the data conditioned on the parameter $u$ and $\mu^y$ is the
posterior measure on $u$. The posterior $\mu^y$ is, in essence,
an updated version of the prior $\mu_0$ that is informed by the data
$y$.

The Bayesian approach has attracted a lot of attention
in the last two decades
\cite{somersalo-inverseproblems-review, somersalo,
  stuart-acta-numerica}. Put simply, the unknown parameter $u$ is
modelled as a random variable and our goal is to obtain a probability
distribution $\mu^y$ on $u$ that is informed by the data $y$ and
our prior knowledge about $u$ (modelled by the measure $\mu_0$). We can generate samples from the posterior 
$\mu^y$
 and 
 if this measure is concentrated around the true value of the parameter,
the sample mean or median will be good estimators of  the true value of the parameter.

The Bayesian approach is well-established in the
statistics literature
\cite{calvetti, bernardo} where it is
often applied in the setting where $X,Y$ are
finite-dimensional spaces.
Here we take $X$ to be an infinite-dimensional 
Banach space, motivation by applications where the parameter
$u$ belongs to a function space such as $L^2$ or $BV$
(the space of functions with bounded variation). Such problems arise
when the forward map involves the solution of a partial differential
equation (PDE) or an integral equation such as the examples
 in Section \ref{sec:examples}. 

In practice  we solve these
problems by discretizing the forward model and approximating the
infinite-dimensional problem with a finite dimensional one. An important task is to ensure
 that the finite dimensional approximation to the posterior measure remains consistent with the
infinite-dimensional posterior measure. For example, we require that the finite
dimensional posterior  converges
to the (true) infinite-dimensional measure in the limit when the
discretization is infinitely fine. 
 Ensuring this 
consistency is a delicate task. An example of an inconsistent
discretization of an infinite
dimensional inverse problem was studied in \cite{lassas-can-we-use-tv} where the
authors demonstrated that the total variation prior loses its edge
preserving properties in the limit of fine discretizations. In order to resolve this issue we study the
 infinite-dimensional inverse
problem
before constructing the discrete approximations.

In this article we set out to achieve the following goals: 
\begin{enumerate}[\hspace{1ex} \it{G}1.]
\item \emph{Construct a new class of infinitely divisible prior measures for recovery of compressible parameters in connection to $\ell_p$ regularization techniques when $p \in (0,1)$.}

\item \emph{Present  a systematic study of the class of infinitely divisible prior measures.}
\item \emph{Introduce an alternative to the classic total variation prior using the laws of pure Jump L{\'e}vy process 
that is well-defined in infinite dimensions.}
\item \emph{Present a theory of well-posedness for Bayesian inverse problems that encompasses the 
prior measures introduced under G1--G3.}
\end{enumerate}
Let us motivate some of these goals with an example.
\begin{example}\label{example-1}
Suppose $u \in \reals^n$ and the data 
$y \in \reals^m$ is generated via the model
$$
y = \mb{A} u + \eta, \qquad \eta \sim \mcl{N}(0, \sigma^2\mb{I})
$$
 where  $\mb{A} \in \reals^{n \times m}$, $\sigma >0$ is fixed and
 $\mb{I}$ is the $m \times m$ identity matrix. We wish  to estimate
 $u$ given $y$. 
Here we are taking $X = \reals^n$, $Y =
\reals^m$ and the forward map has the form $\mcl{G}(u) = \mb{A}u$. 
Since $\eta$ has a Gaussian density  we
can write the likelihood potential $\Phi(u;y)$ as:
$$\Phi(u;y) = \frac{1}{2 \sigma^2} \| \mb{A} u - y \|_2^2.$$
Then, Bayes' rule gives
$$
\frac{\dd \mu^y}{\dd \mu_0}(u) = \frac{1}{Z(y)} \exp\left( -\frac{1}{2\sigma^2} \|
  \mb{A}u - y\|_2^2
\right). 
$$
Now define the prior measure via
\begin{equation}\label{ellp-prior-baby-example}
\frac{\dd \mu_0}{\dd \Lambda}(u) = \frac{1}{U}\exp\left( -\| u\|_p^p\right) 
\end{equation}
where $\dd \Lambda$ denotes the Lebesgue measure on $\reals^n$, $\| \cdot
\|_p$ denotes the usual $\ell_p$ (quasi-)norm in  $\reals^n$ for $p > 0$ and $U$ is
the appropriate normalizing constant. Then the posterior $\mu^y$ can be 
identified via its Lebesgue density as
\begin{equation}\label{posterior-lebesgue-density}
\frac{\dd \mu^y}{\dd \Lambda}(u) = \frac{1}{Z(y)} \exp\left(   -\frac{1}{2\sigma^2} \|
  \mb{A}u - y\|_2^2 - \| u\|_p^p \right).
\end{equation}
The maximizer of the posterior density is referred to as the maximum
  a posteriori (MAP) estimate. Formally, the MAP estimate of the posterior in
  \eqref{posterior-lebesgue-density} is given by
$$
u_{\text{MAP}} = \argmin_{z \in \reals^n} \left\{ \frac{1}{2\sigma^2} \|
\mb{A}z - y
\|_2^2 + \| z\|_p^p \right\}.
$$
For $p \ge 1$ this optimization problem is convex and can be solved
efficiently. Taking $p=1$ results in the well-known
$\ell^1$-regularization technique which is often used in the recovery
of sparse solutions. For values of $p \in (0,1)$ the
resulting optimization problem is no longer convex but it is a good model for recovery of sparse or compressible solutions
\cite{foucart, lucka-dissertation}.
\end{example}

It is straightforward to check that the prior
distribution \eqref{ellp-prior-baby-example} for $p \in (0,1)$ is 
non-convex and heavy-tailed. However, we will see that this measure belongs to the much larger class of  infinitely divisible
measures. Formally, a random
variable $\xi$ is infinitely-divisible if for every $n\in \integers$
its law coincides with the law of $\sum_{k=1}^n \xi_k^{1/n}$ where
$\{\xi_{k}^{1/n}\}$ are i.i.d. random variables. 
Thus, the above example is our first attempt at demonstrating the
potential of infinitely-divisible prior measures (goals {\it G1} and {\it G2}) that are introduced in Section~\ref{sec:non-gaussian-priors}.
 The connection
between sparse recovery and heavy-tailed or infinitely divisible priors has been observed
in the literature. Unser and Tafti \cite{unser} and Unser et
al. \cite{unser-unified, unser-unified2} study the sparse behavior of
stochastic processes that are driven by infinitely divisible force
terms and advocate their use in solution of inverse problems. A
detailed discussion of some heavy-tailed prior distributions such as generalizations
of the student's-$t$ distribution and the $\ell_p$-priors can also be found in the
dissertation \cite{lucka-dissertation}. Finally, Polson and Scott \cite{scott-shrink} and
Carvalho et al. \cite{scott-horseshoe} propose a class of hierarchical
horseshoe priors 
that are tailored to the
recovery of sparse signals.

In practice, {\it solving a Bayesian inverse problem} often refers to
either identifying the posterior measure $\mu^y$ (such as in
\eqref{posterior-lebesgue-density}) or extracting certain statistics
from it such as the mean, the variance, maximizer of the density
etc. But before we can solve a Bayesian inverse problem we need to
know whether the problem is well-posed to begin with (point {\it G4} above):
Does
$\mu^y$ exist? Is it defined uniquely? Does it depend continuously on
the data $y$? And finally, can we approximate it in a consistent
manner?

 Later on we see that the well-posedness of a Bayesian inverse problem relies on the type of prior measure $\mu_0$
that is chosen during the modelling step as well as certain properties of the potential $\Phi$. 
Well-posed Bayesian inverse problems were studied in
\cite{stuart-acta-numerica, cotter-approximation} with Gaussian prior
measures, in \cite{dashti-besov} with Besov priors, in
\cite{hosseini-convex-prior} with convex prior measures and  in \cite{stuart-bayesian-lecture-notes, sullivan} 
with heavy-tailed priors on separable
Banach spaces.
\myhl{We note that our well-posedness results 
in this article are closely related to those of
\cite{stuart-bayesian-lecture-notes}. The main difference is that our theory 
does not rely on the assumption that the parameter space $X$ is
separable and we impose slightly different conditions on the potential $\Phi$}. The non-separability condition is particularly
interesting when
one takes $X$ to be $C^\alpha$(the space of H{\"o}lder continuous
functions) or $BV$, neither of which are separable.  In Section~\ref{sec:stochastic-process-prior} we introduce a class of prior measures that are concentrated on  
$BV$ and have piecewise constant samples (goal {\it G3}). This example is later used in Section~\ref{sec:examples} as 
a prior measure in a deblurring problem.

\subsection{Key definitions and notation} 
We gather here some key definitions and assumptions that are used in the remainder of the article. We let $\reals_+$ denote the positive real line $[0, \infty)$ and
use the shorthand notation $a \lesssim b$ when $a$ and $b$
are real valued functions
 and there exists an independent constant $C > 0$ such that $ a \le C
 b$. Given two random variables $\xi$ and $\zeta$ 
we use the notation $\xi \dequal \zeta$ to denote that they have the same laws (or
distributions). 

We use the shorthand notation $\{ \gamma_k \}$
to denote a sequence of elements $\{ \gamma_k \}_{k=1}^\infty$ in a
vector space. 
The usual $\ell_p$ sequence spaces  for $p \in [1, \infty]$
are defined as the space of real valued sequences $\{ \gamma_k\}$  such that  $\|
\{ \gamma_k\} \|_p < \infty$ where
$$
\left.
\begin{aligned}
\|
\{ \gamma_k \} \|_p := \left( \sum_{k=1}^\infty |\gamma_k|^p
\right)^{1/p} \qquad\text{if} \qquad p \in [1, \infty)   \qquad
\text{and} \qquad
\| \{ \gamma_k \} \|_\infty := \sup_{k} |\gamma_k|.
\end{aligned}  
\right.
$$
Similarly, we define the $\| \cdot \|_p$ norms of
finite dimensional vectors. In particular $\| \cdot \|_2$ will denote the usual
Euclidean norm. Given a positive definite matrix 
$\pmb{\Sigma}$ of size $m
\times m$,  we define the norm 
$$
\| x \|_{\pmb{\Sigma}} := \| \Sigma^{-1/2} x \|_2 \qquad \text{for}
\qquad x \in \reals^m.
$$

Throughout the article we use $\Lambda$ to denote the Lebesgue
measure in finite dimensions.
Given a Borel
 measure $\mu$ on a Banach space $X$ we define the spaces $L^p( X, \mu)$ for $p \in
 [1, \infty)$ 
as the space of $\mu$-equivalent classes of functions $h: X \mapsto
\reals$ such that $|h|^p$ is $\mu$-integrable. We also use the shorthand
notation $L^p(X)$ instead of $L^p(X, \Lambda)$ whenever we are working
with the Lebesgue measure. 
 Finally, if $X$ is a Banach
space, we use $X^\ast$ to denote the topological dual of $X$ and \myhl{$B_X(r)$ to denote the open ball of 
radius $r>0$ in $X$ that is centered at the origin. The shorthand notation $B_X$ denotes 
the unit ball.}

We shall consider the prior probability measure $\mu_0$ to be in the class of
 \myhl{Borel} probability measures on $X$. \myhl{In some cases we assume that the prior $\mu_0$ is Radon
 meaning that it is an inner
 regular probability measure on the Borel sets of $X$.} Furthermore,
 whenever we say that $\mu$ is
a probability  measure on $X$ we automatically mean that $\mu(X)
=1$. Finally, throughout this article we only consider complete probability measures in the following sense: If $\mu$ is a
Borel probability measure on $X$ and $A$ is a set of $\mu$-measure
zero then every subset of $A$ also has measure zero.

In this article we focus on the following notion of
a  well-posed Bayesian inverse problem:

\begin{definition}[Well-posedness] \label{def-existence-uniqueness}
Suppose that $X$ is a Banach space and $d( \cdot, \cdot) \mapsto \reals$
is a metric on the space of
Borel probability measures on $X$.
Then for a choice of the
prior measure $\mu_0$ and the likelihood potential $\Phi$, the Bayesian inverse problem given by \eqref{bayes-rule}
 is well-posed with respect to $d$ if:
  \begin{enumerate}
  \item (Existence and uniqueness) There exists a unique posterior
    probability measure $\mu^y \ll \mu_0$ given by Bayes' rule \eqref{bayes-rule}.
  \item (Stability) For every choice of $\epsilon >0$ there exists a
    $\delta > 0$ so that 
 $d( \mu^y , \mu^{y'} ) \le \epsilon$ for all $y, y' \in Y$ so that
 $\| y - y' \|_Y \le \delta$.
  \end{enumerate}
\end{definition}

We will study the
convergence of probability measures using
 the Hellinger and total variation
metrics
on the space of probability measures on $X$. 
For two probability measures $\mu_1$ and $\mu_2$ that are absolutely
continuous with respect to  
a third measure $\nu$ on $X$, the total variation and Hellinger metrics are defined as 
\begin{equation}
  \label{TV-metric}
  d_{TV}( \mu_1, \mu_2) := \frac{1}{2} \int_X   \left|  \frac{\dd
      \mu_1}{ \dd \nu}  - \frac{\dd \mu_2}{ \dd \nu}  \right| \dd \nu
  \quad \text{and} \quad   d_H( \mu_1, \mu_2) := \left( \frac{1}{2} \int_X  \left( \sqrt{ \frac{\dd \mu_1}{ \dd \nu} } - \sqrt{ \frac{\dd \mu_2}{ \dd \nu} }  \right)^2 \dd \nu \right)^{1/2}.
\end{equation}
Both metrics are independent of the choice of the measure $\nu$
 \cite[Lem.~4.7.35]{bogachev1}. 
Furthermore, convergence in one of these metrics implies convergence
in the other, due
to the following inequalities (see 
\cite[Lem.~4.7.37]{bogachev1} for a proof) 
\begin{equation}\label{tv-hellinger-equivalence}
2 d^2_H(\mu_1 , \mu_2) \le d_{TV} (\mu_1, \mu_2) \le \sqrt{8}
d_H(\mu_1, \mu_2).
\end{equation}
However, one might prefer to work with the Hellinger metric as it relates
directly to the error in expectation of certain
functions. Suppose that $h \in L^2(X, \mu_1) \cap L^2(X, \mu_2)$. Then
using the Radon-Nikodym theorem and H\"{o}lder's inequality one can
show (see \cite[Sec.~1]{hosseini-convex-prior} for details)
\begin{equation}\label{hellinger-metric-bounds-expectation}
\begin{aligned}
 \left| \int_X h(u)  \dd \mu_1 (u)  \right. \left.- \int_X h(u) \dd \mu_2(u) \right|
\le 2\left( \int_X h^2(u) \dd \mu_1 + \int_X h^2(u) \dd \mu_2\right)^{1/2}
d_H(\mu_1, \mu_2). 
\end{aligned}
\end{equation}
For reasons that will become clear in Section
\ref{sec:well-posedness}, we prefer to study the well-posedness of
inverse problems using both the Hellinger and total variation
metrics. The main difference is in the restrictions that we need to
impose on the prior $\mu_0$ in order to obtain a certain rate of convergence for each
metric. 


\section{Infinitely-divisible prior measures} 
\label{sec:non-gaussian-priors}

 We start by presenting a generalization of the prior
distribution \eqref{ellp-prior-baby-example} that was considered in
Example 1. We show that this prior belongs to a larger class of
distributions that are closely related to $\ell_p$ 
regularization techniques. We shall extend these distributions to
measures on Banach spaces with an unconditional Schauder
basis and observe that they belong to
the much larger class of infinitely-divisible (ID) measures (see Definition \ref{definition-ID-measures}).
 Motivated
by this connection between $\ell_p$ regularization and ID priors, we
turn our attention to the ID class and discuss some of its properties. In particular, we study the tail behavior of ID priors 
with respect to their L{\'e}vy measures (see Definition \ref{levy-measure-definition}).




\subsection{A class of shrinkage priors with compressible samples}\label{sec:priors-for-compressibility}
When faced with the problem of recovering a sparse or compressible
parameter we require the prior measure to
reflect the intuition that the solution to the inverse problem is likely to
have only a few large modes in some basis and the rest of the modes are
negligible (see
\cite[Sec.~6.1]{lucka-dissertation}). 
Such prior distributions are often referred to as ``shrinkage
priors'' and they have been the subject of extensive research
\cite{scott-shrink, scott-horseshoe, ghosh,
  vandervaart-needles-in-haystack, vandervaart-bayesian-sparse}. In
this section we consider a few examples of shrinkage priors that are
closely related to $\ell_p$ regularization techniques.

Most of the existing literature on  shrinkage priors is
focused on finite dimensional problems but we present an extension
of these priors to infinite-dimensional 
Banach spaces.
Since compressibility is often considered with
respect to a basis, it makes sense for us to consider 
a parameter space that has a basis. 


Given a parameter space $X$, or at least a subspace $\tilde{X}
\subseteq X$ that has an unconditional Schauder basis $\{ x_k \}$, we
construct random variables of the form 
\begin{equation}
\label{product-prior-sample}
u \sim \sum_{k=1}^\infty \gamma_k \xi_k x_k
\end{equation}
where $\{ \gamma_k \}$ is a fixed sequence of real valued coefficients
that decay sufficiently fast and the $\{\xi_k\}$ are a sequence of
independent real valued random variables that need not be
identically distributed. We will take
the prior measure $\mu_0$ to be the law of the random variable $u$ in \eqref{product-prior-sample}. We 
refer to such a prior measure $\mu_0$ as the {\it product prior}
obtained from $\{ \gamma_k\}$ and $\{ \xi_k\}$. This
construction of the prior is reminiscent of the Karhunen-Lo\'{e}ve
expansion of Gaussian measures \cite[Thm.~3.5.1]{bogachev-gaussian}. The following theorem
gives sufficient conditions that ensure $\| \cdot \|_X < \infty$ $\mu_0$-a.s. 

\begin{theorem}\cite[Thm.~3.9]{hosseini-convex-prior}
\label{product-prior-ellp}
  Suppose that $X$ is a Banach space with an unconditional Schauder
  basis and let $u$
  be  as in \eqref{product-prior-sample}. If $\{ \gamma_k^2 \}
  \in \ell_p$ and $\{ \VV \xi_k \} \in \ell^q$ for $1 < p, q <
  \infty$ so that $1/p + 1/q = 1$ (with $p=1$ for the limiting case
  when $q=\infty$), then $\| u \|_X < \infty $ a.s. In particular, 
if the $\{ \xi_k\}$ are i.i.d.,  $\VV \xi_1 < \infty$ and $\{ \gamma_k\} \in \ell^2$, then 
$\| u\|_X < \infty$ a.s.
\end{theorem}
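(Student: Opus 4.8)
The plan is to reduce the statement to the almost-sure convergence, in the norm of $X$, of the random series $\sum_k \gamma_k \xi_k x_k$, and then to control that convergence through the summability of the coefficient variances. First I would record the elementary but essential consequence of the two hypotheses: since $1/p + 1/q = 1$, Hölder's inequality applied to $\{ \gamma_k^2 \} \in \ell_p$ and $\{ \VV \xi_k \} \in \ell^q$ gives
\[
\sum_{k=1}^\infty \gamma_k^2\, \VV \xi_k \;\le\; \left\| \{ \gamma_k^2 \} \right\|_p \, \left\| \{ \VV \xi_k \} \right\|_q \;<\; \infty ,
\]
so the coefficients $\{ \gamma_k \xi_k \}$ are square-summable in expectation. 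The limiting case $p=1$, $q=\infty$ runs identically, with $\sup_k \VV \xi_k$ in place of the $\ell^q$ norm; specializing further to identically distributed $\xi_k$ with $\{ \gamma_k \} \in \ell^2$ then yields the final ``in particular'' assertion.

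Since $X$ carries a Schauder basis it is separable, which lets me view $S_n := \sum_{k=1}^n \gamma_k \xi_k x_k$ as the partial sums of independent $X$-valued random elements. After subtracting means (or symmetrizing against an independent copy, which is natural here since the priors of interest are symmetric) the summands $\gamma_k(\xi_k - \EE \xi_k) x_k$ are independent and symmetric. The key reduction is the It\^o--Nisio theorem: for independent symmetric summands in a separable Banach space, almost-sure convergence of $S_n$ is equivalent to convergence in probability, equivalently to the existence of a limiting law whose characteristic functional is the pointwise limit of those of $S_n$. It therefore suffices to establish convergence in probability, after which L\'evy's maximal inequality promotes it to the almost-sure conclusion $\| u \|_X < \infty$.

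To obtain convergence in probability I would exploit the unconditionality of the basis. Writing $x_k^\ast$ for the coordinate functionals, the scalar two-series theorem together with $\sum_k \gamma_k^2 \VV \xi_k < \infty$ gives almost-sure convergence of each coordinate series $\sum_k x_k^\ast(S_n) = \sum_k \gamma_k \xi_k$, producing a coordinate-wise candidate limit. I would then try to use the unconditional basis constant to dominate the tail projections $\sum_{k=m}^n \gamma_k \xi_k x_k$ uniformly, passing from coordinate-wise control to control of $\| S_n - S_m \|_X$ itself.

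The main obstacle is precisely this last passage. Square-summability of the variances is a statement about coordinates, whereas the conclusion concerns the Banach norm, and bridging the two is exactly where the unconditional structure of $\{ x_k \}$ must do the work; the Hölder estimate and the It\^o--Nisio reduction, by contrast, are routine. I expect the coordinate-to-norm comparison — and the uniform tail bound it requires — to carry the real content of the argument.
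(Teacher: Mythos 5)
The paper does not actually prove this theorem: it is imported verbatim from \cite[Thm.~3.9]{hosseini-convex-prior}, so there is no internal argument to compare against and your proposal has to stand on its own. Your H\"older step and the It\^o--Nisio reduction (separability from the Schauder basis, symmetrization, almost-sure convergence from convergence in probability) are correct and standard. The genuine gap is exactly the step you single out as ``the main obstacle'': passing from $\sum_k \gamma_k^2\, \VV \xi_k < \infty$, a purely coordinatewise statement, to control of $\| S_n - S_m \|_X$. This is not merely the hard part of the argument --- it is unavailable from unconditionality alone. Unconditionality gives sign- and multiplier-invariance, $\| \sum_k \epsilon_k a_k x_k \|_X \le K \| \sum_k a_k x_k \|_X$ for $|\epsilon_k| \le 1$, but it provides no upper estimate of $\| \sum_k a_k x_k \|_X$ by $( \sum_k a_k^2 )^{1/2}$, so the ``uniform tail bound'' you hope to extract from the unconditional basis constant does not exist in general.

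Concretely, take $X = \ell^1$ with its standard ($1$-unconditional, normalized) basis $\{ e_k \}$, $\gamma_k = 1/k$, and $\xi_k$ i.i.d.\ Rademacher. Every hypothesis of the ``in particular'' clause holds ($\{\gamma_k\} \in \ell^2$, $\VV \xi_1 = 1$), yet $\| \sum_{k=1}^N \gamma_k \xi_k e_k \|_{\ell^1} = \sum_{k=1}^N 1/k \to \infty$ surely, so the coordinate-to-norm comparison --- and indeed the stated conclusion --- fails for a general unconditional basis. What is missing, both from your proof and from the statement as transcribed here, is a geometric assumption on $X$ (type $2$, or an upper $\ell^2$-estimate for the basis, as holds for an orthonormal basis of a Hilbert space) under which $\EE \| \sum_{k=m+1}^{n} \gamma_k \xi_k x_k \|_X^2 \lesssim \sum_{k=m+1}^{n} \gamma_k^2\, \VV \xi_k$; with that in hand the partial sums are Cauchy in $L^2(\Omega; X)$ and your It\^o--Nisio reduction finishes the proof. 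Without some such hypothesis, no amount of work on the tail projections will close the gap.
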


\myhl{We can also show that the product prior $\mu_0$ that is induced by
\eqref{product-prior-sample} is Radon. Proof of the next theorem 
follows the same approach as \cite[Thm.~3.10(ii)]{hosseini-convex-prior} and is hence omitted. }

\begin{theorem}\label{product-prior-is-radon}
  Let $\mu$ be the probability measure that is induced by the random
  variable $u$ given by \eqref{product-prior-sample} where $\{
  \gamma_k \} $ and $\{ \xi_k\}$ satisfy the conditions of Theorem
  \ref{product-prior-ellp}. Then $\mu$ is a Radon probability measure
  on $X$ if the random variables $\{\xi_k\}$ are distributed according
  to Radon
  probability measures on $\reals$. 
\end{theorem}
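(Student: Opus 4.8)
The plan is to reduce the claim to a tightness statement and then invoke the standard fact that, on a metric space, a finite Borel measure is Radon precisely when it is tight. Since $X$ is a Banach space, every finite Borel measure on $X$ is automatically inner regular with respect to closed sets and outer regular with respect to open sets \cite{bogachev1}; intersecting such a closed approximant with a compact set of large measure upgrades this to inner regularity with respect to compact sets. Before doing so I would first record that $\mu$ is genuinely a Borel probability measure: each partial sum $\sum_{k=1}^n\gamma_k\xi_k x_k$ is a measurable map into the finite-dimensional subspace $\Span{x_1,\dots,x_n}$, and by Theorem~\ref{product-prior-ellp} these converge $\mu$-almost surely in $X$, so their almost sure limit $u$ is Borel measurable and $\mu=\PP\circ u^{-1}$. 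It then remains to produce, for each $\epsilon>0$, a compact set $K\subseteq X$ with $\mu(K)>1-\epsilon$.

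To build $K$ I would work with the basis projections $P_n u := \sum_{k=1}^n x_k^\ast(u)\,x_k$, where $\{x_k^\ast\}$ are the coordinate functionals of the Schauder basis and $\sup_n\|P_n\|<\infty$. The compactness criterion I rely on is the routine one: a subset $A\subseteq X$ is relatively compact as soon as it is bounded and its basis tails vanish uniformly, meaning $\sup_{u\in A}\|u-P_n u\|\to 0$ as $n\to\infty$ (truncate at a level where the tail is below $\epsilon/2$ and cover the resulting bounded subset of a finite-dimensional space by finitely many $\epsilon/2$-balls). The set $K$ will be cut out by two families of closed constraints. For the low-order coordinates I would use that each $\xi_k$ is Radon on $\reals$ to choose a compact $C_k\subseteq\reals$ with $\PP(\xi_k\notin C_k)<\epsilon\,2^{-k-1}$ and put $M_k:=\sup_{t\in C_k}|t|<\infty$. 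For the tails I would use that $\|u-P_n u\|\to 0$ $\mu$-a.s., so that the envelopes $G_N(u):=\sup_{m\ge N}\|u-P_m u\|$ decrease to $0$ $\mu$-a.s.; fixing $\delta_j\downarrow 0$, continuity of measure from above produces indices $N_1<N_2<\cdots$ with $\mu(\{G_{N_j}>\delta_j\})<\epsilon\,2^{-j-1}$.

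With these choices I would set
\[
K:=\Big\{\,u=\textstyle\sum_{k}\gamma_k a_k x_k\in X:\ a_k\in C_k\ \text{for } k\le N_1,\ \ \|u-P_m u\|\le\delta_j\ \text{for all } m\ge N_j,\ j\ge 1\,\Big\},
\]
and verify its properties. It is closed because every defining inequality involves a continuous function. It is bounded, since for $u\in K$ one has $\|u\|\le\|P_{N_1}u\|+\|u-P_{N_1}u\|\le\sum_{k\le N_1}|\gamma_k|M_k\|x_k\|+\delta_1$. Its tails vanish uniformly, since for any $\eta>0$ we may pick $j$ with $\delta_j<\eta$ and then $\|u-P_m u\|\le\delta_j<\eta$ for every $m\ge N_j$ and every $u\in K$; hence $K$ is compact. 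Finally the union bound gives
\[
\mu(X\setminus K)\le\sum_{k\le N_1}\PP(\xi_k\notin C_k)+\sum_{j\ge 1}\mu(\{G_{N_j}>\delta_j\})<\tfrac{\epsilon}{2}+\tfrac{\epsilon}{2}=\epsilon,
\]
where I have used $\{\,\exists\,m\ge N_j:\|u-P_m u\|>\delta_j\,\}=\{G_{N_j}>\delta_j\}$. This establishes tightness and hence the claim.

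The step I expect to be the main obstacle is the conversion of the \emph{almost sure} decay of the random tails into a single \emph{deterministic} compact set carrying almost all of the mass; this is precisely what the monotone envelopes $G_N$ together with continuity of measure from above achieve, and it is the only place where the almost sure convergence furnished by Theorem~\ref{product-prior-ellp} is essential. The Radon hypothesis on the laws of $\{\xi_k\}$ is used only to handle the finitely many low-order coordinates (equivalently, to guarantee tightness of each scalar factor); on $\reals$ this is automatic, but invoking it keeps the construction transparent and mirrors the argument of \cite[Thm.~3.10(ii)]{hosseini-convex-prior} on which the proof is modelled.
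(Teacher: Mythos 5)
Your proof is correct, and it takes the route the paper itself points to: the paper omits the proof, deferring to the tightness argument of \cite[Thm.~3.10(ii)]{hosseini-convex-prior}, which is exactly the construction you carry out (compact coordinate constraints on finitely many modes plus uniformly vanishing Schauder-basis tails, then tight $\Rightarrow$ Radon on a metric space). The only cosmetic point is that when some $\gamma_k=0$ the condition $a_k\in C_k$ should be read as $x_k^\ast(u)\in\gamma_k C_k=\{0\}$, which changes nothing.
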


Before going further we present a result on the second raw
moment of product priors which will be useful throughout the remainder
of the article.
\begin{theorem}\label{bounded-moments-product-prior}
  Suppose that $X$ is a Banach space with an unconditional Schauder
  basis $\{ x_k\}$ and
  let $\mu$ be the product prior obtained from $\{ \gamma_k\} \in 
\ell^2 $ and
  $\{ \xi_k \}$ where $\xi_k$ are i.i.d. and $\VV \xi_k < \infty$. Then
  $\|\cdot\|_X \in L^2 (X, \mu)$.
\end{theorem}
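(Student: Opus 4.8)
The plan is to prove the equivalent statement $\EE \|u\|_X^2 = \int_X \|u\|_X^2 \,\dd\mu(u) < \infty$ for $u = \sum_{k=1}^\infty \gamma_k \xi_k x_k$ as in \eqref{product-prior-sample}, by controlling the partial sums $S_N := \sum_{k=1}^N \gamma_k \xi_k x_k$ uniformly in $N$. By Theorem \ref{product-prior-ellp} the defining series converges $\mu$-a.s., so $\|S_N\|_X^2 \to \|u\|_X^2$ a.s. by continuity of the norm, and Fatou's lemma gives $\EE\|u\|_X^2 \le \liminf_N \EE\|S_N\|_X^2$. Hence it suffices to establish $\sup_N \EE\|S_N\|_X^2 < \infty$. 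Throughout I take the basis to be normalized, $\|x_k\|_X = 1$, so that $\sum_k \gamma_k^2 \|x_k\|_X^2 = \sum_k \gamma_k^2 < \infty$. The first move is to split off the mean: writing $m := \EE\xi_1$, I decompose $S_N = T_N + m R_N$, where $T_N := \sum_{k=1}^N \gamma_k(\xi_k - m) x_k$ has independent, mean-zero summands and $R_N := \sum_{k=1}^N \gamma_k x_k$ is deterministic, so that
\[
\EE\|S_N\|_X^2 \le 2\,\EE\|T_N\|_X^2 + 2 m^2 \|R_N\|_X^2 .
\]
I then bound the two terms separately.

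The fluctuation term is the heart of the argument and I would treat it by symmetrization. Letting $\{\xi_k'\}$ be an independent copy of $\{\xi_k\}$ and $T_N'$ the corresponding centered sum, convexity of $\|\cdot\|_X^2$ and $\EE T_N' = 0$ give $\EE\|T_N\|_X^2 \le \EE\|T_N - T_N'\|_X^2 = \EE\big\|\sum_{k\le N} Y_k\big\|_X^2$, where $Y_k := \gamma_k(\xi_k - \xi_k') x_k$ are independent and symmetric, and the series $\sum_k Y_k$ converges a.s. as the difference of two a.s.-convergent copies of \eqref{product-prior-sample}. I would invoke the integrability theorem of Hoffmann--J{\o}rgensen and de~Acosta for a.s.-convergent sums of independent symmetric random vectors, by which finiteness of $\EE\big\|\sum_k Y_k\big\|_X^2$ is equivalent to finiteness of $\EE \sup_k \|Y_k\|_X^2$. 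The latter is immediate from the finite-variance hypothesis, since $\EE\sup_k \|Y_k\|_X^2 \le \EE\sum_k \gamma_k^2 |\xi_k - \xi_k'|^2 = 2\,\VV\xi_1 \sum_k \gamma_k^2 < \infty$. A tail version of the same estimate (applied to $\sum_{M < k \le N} Y_k$, whose summand-maxima and first moment both tend to zero as $M \to \infty$) shows that the symmetrized sums are $L^2$-Cauchy, and then by the Jensen bound above the centered sums $T_N$ are $L^2$-Cauchy as well; thus $T_N \to T_\infty$ in $L^2(X,\mu)$ with $\sup_N \EE\|T_N\|_X^2 < \infty$.

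It remains to bound the deterministic term $\sup_N \|R_N\|_X$, which I expect to be the main obstacle, precisely because it concerns the non-symmetric drift that the symmetric integrability theory does not see. The idea is to read off its convergence from the other two: since $S_N \to u$ a.s. (hence in probability) and $T_N \to T_\infty$ in $L^2$ (hence in probability), the quantity $m R_N = S_N - T_N$ converges in probability; but $R_N$ is non-random, so when $m \neq 0$ this forces $R_N \to R$ in $X$ for some $R$, whence $\sup_N \|R_N\|_X < \infty$ (and when $m = 0$ the term is simply absent). Combining this with the uniform bound on $\EE\|T_N\|_X^2$ yields $\sup_N \EE\|S_N\|_X^2 < \infty$, and the conclusion follows from the Fatou reduction; equivalently one may write $u = T_\infty + m R$ directly and bound $\EE\|u\|_X^2 \le 2\,\EE\|T_\infty\|_X^2 + 2 m^2 \|R\|_X^2$. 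The delicate point is exactly this transfer from the symmetric summands to the original sum: the finite-variance assumption controls the fluctuations cleanly, but the drift $m\sum_k \gamma_k x_k$ must be shown to converge in $X$, and for that one genuinely relies on the a.s. convergence supplied by Theorem \ref{product-prior-ellp} rather than on any moment estimate.
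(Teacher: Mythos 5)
Your proof is correct, but it takes a genuinely different---and noticeably more complete---route than the paper's. The paper's own argument is a two-line computation: it writes the difference of second moments of the partial sums as $\int_X(\|u_M\|_X-\|u_N\|_X)(\|u_M\|_X+\|u_N\|_X)\,\dd\mu$ and concludes from the a.s. convergence $\|u_N\|_X\to\|u\|_X$ that the sequence of moments is Cauchy; passing that pointwise limit through the integral is exactly the step that requires a domination or uniform-integrability argument, which the paper does not supply. Your proof furnishes precisely the missing ingredient, namely the uniform bound $\sup_N\EE\|S_N\|_X^2<\infty$, by centering, symmetrizing, and invoking the Hoffmann--J{\o}rgensen integrability theorem for a.s.-convergent sums of independent symmetric vectors, with the deterministic drift $m\sum_k\gamma_k x_k$ recovered cleanly from convergence in probability. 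The cost of your route is reliance on a nontrivial external theorem that the paper never cites; the benefit is an argument that actually closes the logical gap, and as a by-product you obtain $L^2$ convergence of the partial sums rather than just finiteness of the limit moment. Two points worth making explicit if you write this up: the passage from $\EE\|\sum_k Y_k\|_X^2<\infty$ to $\sup_N\EE\|\sum_{k\le N}Y_k\|_X^2<\infty$ rests on conditional Jensen (each partial sum is the conditional expectation of the full symmetric sum given its first $N$ summands), and your normalization $\|x_k\|_X=1$ matches the convention the paper uses elsewhere but is not stated in the theorem itself.
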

\begin{proof}
Let $u_N = \sum_{k=1}^N \gamma_k \xi_k x_k$ then for  $M > N > 0$ we
have 
$$
\begin{aligned}
\left| \int_X  \| u_M\|_X^2  ~\dd \mu - \int_X \| u_N\|_X^2~ \dd \mu \right| &= 
\left|\int_X (\| u_M\|_X - \| u_N\|_X) (\| u_M\|_X + \| u_N\|_X) ~ \dd
\mu\right|
\end{aligned}
$$
By Theorem~\ref{product-prior-ellp} we know that $\| u \|_X <
\infty$ a.s. and so in the limit as $M,N \mapsto \infty$, $|(\| u_M\|_X -
\| u_N\|_X)| \mapsto 2 \| u\|_X$ and  $|(\| u_M\|_X -
\| u_N\|_X)| \mapsto 0$ and so $\{ \| u_N\|^2_X \}$ is
Cauchy in $L^2(X, \mu)$. 
\end{proof}

We are now in position to discuss a few examples of shrinkage
priors. 
Motivated by Example \ref{example-1}, we define the class of $\ell_p$-priors
 as follows:
\begin{definition}[$\ell_p$-prior]
 ~Let $X$ be a Banach space with an unconditional Schauder basis $\{
 x_k \}$, then we say that a Radon probability measure
$\mu$ is an $\ell_p$-prior on $X$ if its samples can be expressed as
$
u = \sum_{k=1}^\infty \gamma_k \xi_k x_k
$
where $\{\gamma_k\} \in \ell^2$ and $\{ \xi_k \} $ is an i.i.d.
sequence of real valued random variables with Lebesgue density
\begin{equation}\label{generalized-normal}
\xi_k \sim \frac{p}{2 \alpha \Gamma(1/p)}\exp\left(
-{\frac{|t|^p}{\alpha^p} }\right) \dd \Lambda(t)
\end{equation}
where $p \in (0, \infty)$ and $\alpha = \sqrt{ \Gamma(1/p) / \Gamma(3/p) }$.
\end{definition}

Here $\Gamma$ denotes the usual Gamma function.
The distribution in \eqref{generalized-normal} belongs to the larger
class of Generalized Normal distributions
\cite{nadarajah-generalized-normal}.
This class is also referred to as a Kotz-type
distribution \cite{nadarajah-kotz} or a generalized Laplace
distribution \cite{kotz-laplace}. Here we will not use either of these
terms and simply refer to this distribution as the
$\ell_p$-distribution to emphasize its connection to
$\ell_p$-regularization techniques. The random variables $\xi_k$ have bounded moments
of all orders (see
\cite{nadarajah-generalized-normal} or the discussions following the
definition of the $G_{p,q}$-prior below), in fact
$$
\EE \xi_k^s = \frac{\alpha^s ( 1 + (-1)^s)}{2\Gamma(1/p)}
\Gamma\left( \frac{s + 1}{p} \right) \qquad \text{for} \qquad s \in \integers.
$$
In particular we have that $\VV \xi_k =
1$ and so it follows from Theorem \ref{bounded-moments-product-prior}
that the $\ell_p$-prior has bounded second moments.


Another, closely related class of priors to the $\ell_p$-priors can be obtained by a symmetrization of the
Weibull distribution:
\begin{definition}[$W_p$-prior]
  Let $X$ be a Banach space with an unconditional Schauder basis $\{
  x_k \}$, then we say that a Radon probability measure
$\mu$ is a $W_p$-prior on $X$ if its samples can be expressed as
$
u = \sum_{k=1}^\infty \gamma_k \xi_k x_k
$ where $\{\gamma_k\} \in \ell^2$ and $\{ \xi_k \} $ is an i.i.d.
sequence of real valued random variables with Lebesgue density 
\begin{equation}\label{Wp-distribution}
\xi_k \sim \frac{p}{\alpha} \left( \frac{|t|}{\alpha} \right)^{p-1} 
\exp \left( -\frac{|t|^p}{\alpha^p} \right) \dd \Lambda(t),
\end{equation}
where $p \in (0, \infty)$ and $\alpha = (2\Gamma(1 + 2/p))^{-1/2}$.
\end{definition}

The distribution of $\xi_k$ is simply a symmetric version of
the well-known Weibull distribution
\cite{kotz-univariate-v1}, hence the name $W_p$. A straightforward calculation shows that
$\VV \xi_k = 1$ and once again it follows from Theorem
\ref{bounded-moments-product-prior}
that the $W_p$-priors have bounded second moments.

Both the $W_p$ and $\ell_p$
distributions reduce to the Laplace distribution when $p=1$. For $p <
1$ the $\ell_p$ distribution has non-convex level sets and puts a
large portion of
its mass close to the axes (see Figure~\ref{fig:ellp-wp-samples}). This behavior becomes stronger for smaller  $p$
 and suggests that the $\ell_p$-prior will incorporate
sparse behavior as $p \mapsto 0$. 

The $W_p$ distribution behaves very differently in comparison
to the $\ell_p$ distribution.  For
$p< 1$ the $W_p$ distribution blows up at the origin (see Figure~\ref{fig:ellp-wp-samples}(a)). This means that the $W_p$ distribution
puts more of its mass at the
origin which leads us to believe that it must incorporate stronger
compressibility than the $\ell_p$ distribution. 

Further insight into the behavior of the $W_p$-prior can be obtained
by considering its MAP point estimate in finite dimensions. Formally, using this prior in
Example \ref{example-1}
gives rise to an optimization problem of the
form
$$
u_{\text{MAP}} = \argmin_{z \in \reals^n}  \left\{ \frac{1}{2} \| \mb{A} z -
y \|_2^2 + \| z\|_p^p + (1 - p) \sum_{k=1}^n \log( | z_k|) \right\}. 
$$
Of course, the $\log$ term on right hand side is not bounded from
below and so we cannot gain much insight from this problem.
 However, we can consider a slightly modified version of this optimization
problem by introducing a small parameter $\epsilon >0$ 
$$
u_\epsilon = \argmin_{z \in \reals^n}  \left\{ \frac{1}{2} \| \mb{A} z -
y \|_2^2 + \| z\|_p^p + (1 - p) \sum_{k=1}^n \log( \epsilon + | z_k|) \right\}.
$$
Now if $\epsilon$ is small then the $\log$ term will heavily
penalize any modes of the solution that  are on a larger scale than that of
$\epsilon$ and so we expect that most of the modes of the solution
$u_\epsilon$ will be on the scale of the small parameter
$\epsilon$. The stronger shrinkage of the posterior due to the $W_p$-prior is also evident in
Figure \ref{fig:generic-densities-2D} where we compare a prototypical example of  posteriors
that arise from the $W_{p}$ and $\ell_p$ priors for solution of
Example \ref{example-1} in 2D. Here, we clearly see that the
$W_{1/2}$-prior results in a posterior that is highly concentrated
around the axes compared to the posterior that arises from
$\ell_{1/2}$-prior 
which is more spread out. Note that in either case, the posteriors are
highly concentrated around the axes meaning that the map estimates as 
well as most of the samples from these posteriors will incorporate sparsity.

\begin{figure}[htp]
  \centering
        \includegraphics[width=.3\textwidth]{./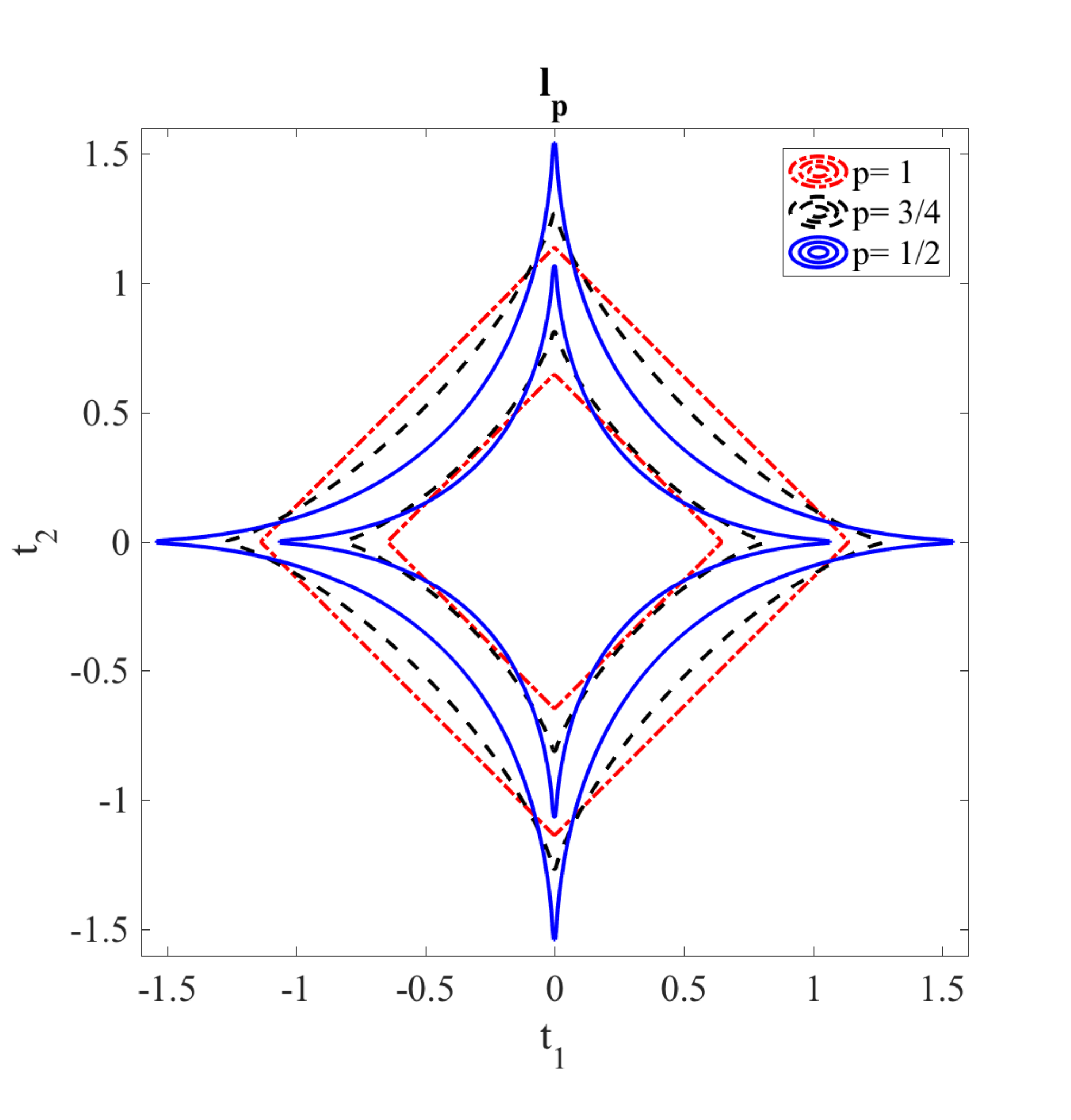}\qquad
        \includegraphics[width=.3\textwidth]{./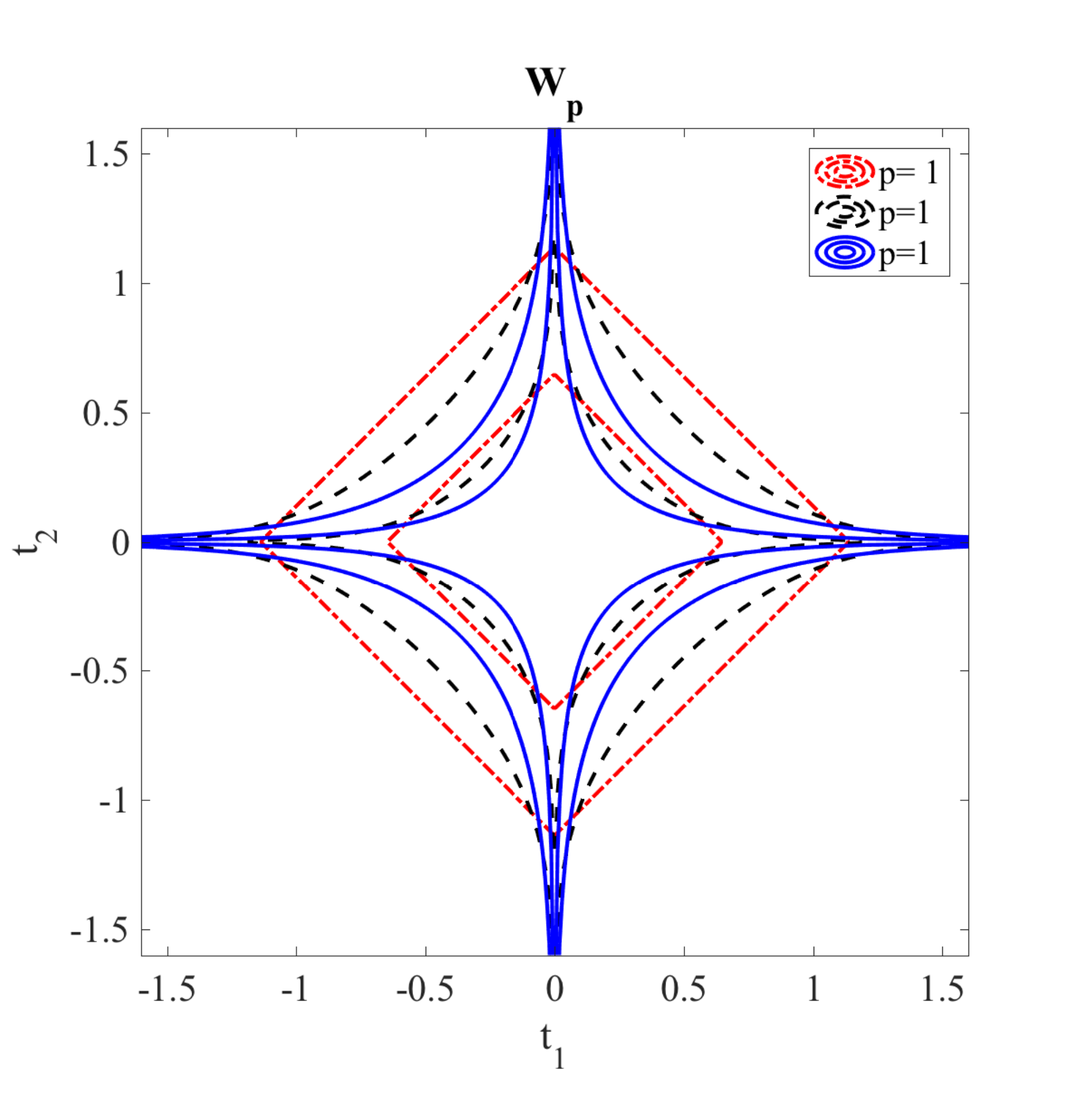}
  \caption{Contour plots of $\ell_p$ and $W_p$
    densities in 2D for different values of $p$.}
  \label{fig:ellp-wp-samples}
\end{figure}

\begin{figure}[htp]
  \centering
  \includegraphics[width=0.32\textwidth, clip=true, trim= 2cm 0cm 3cm 0cm]{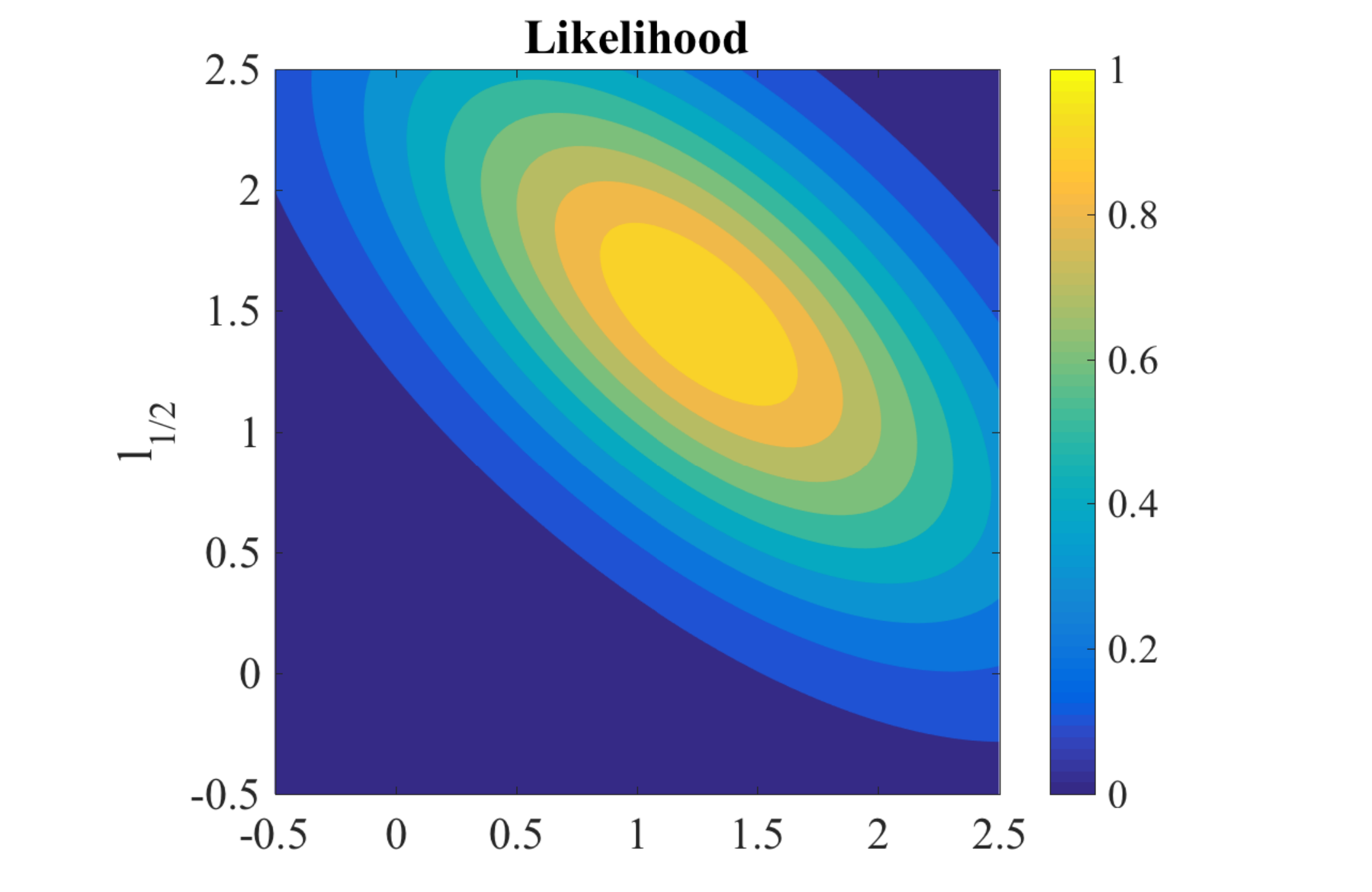}
  \includegraphics[width=0.32\textwidth, clip=true, trim= 2cm 0cm 3cm 0cm]{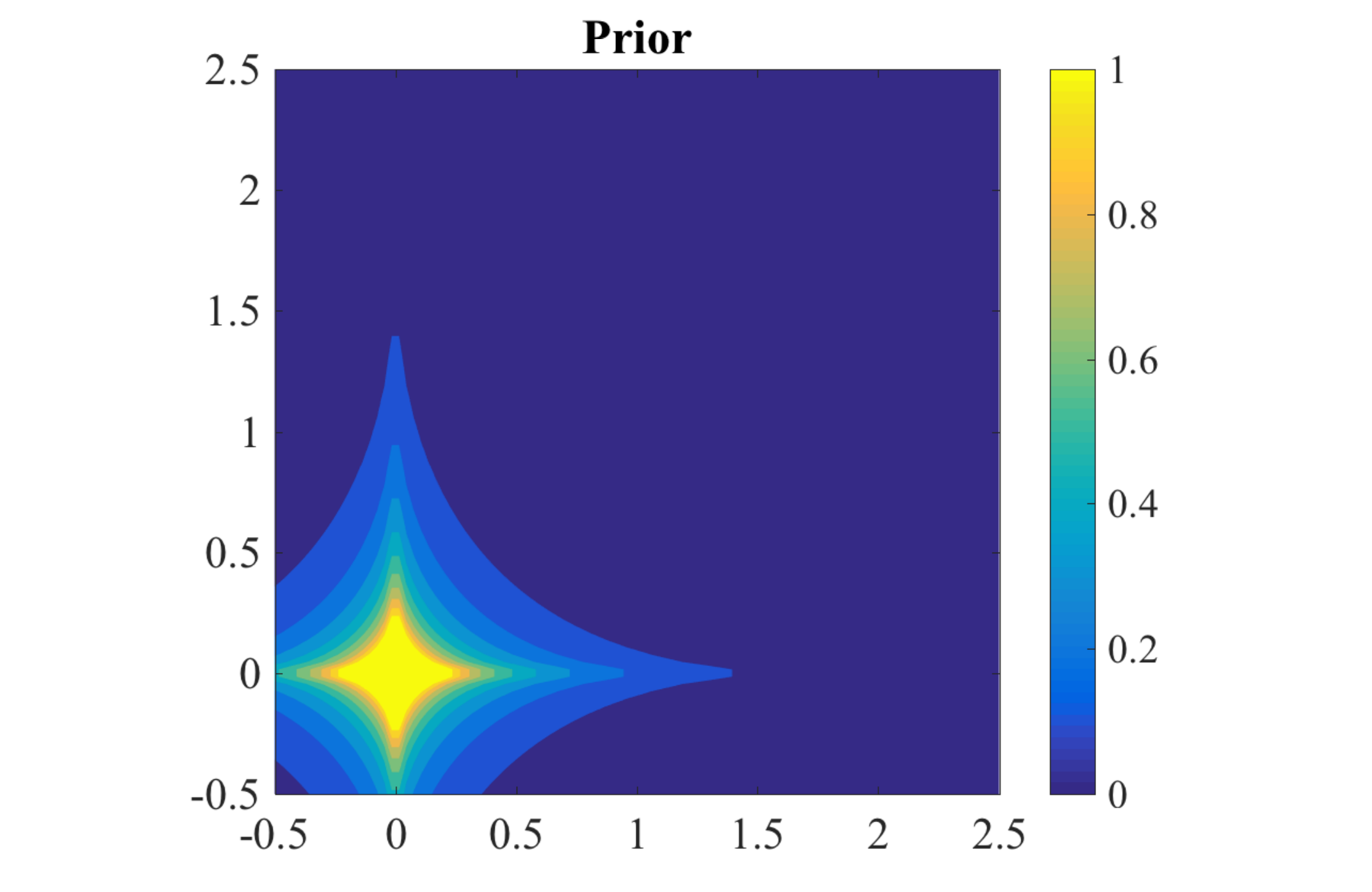}
  \includegraphics[width=0.32\textwidth, clip=true, trim= 2cm 0cm 3cm
  0cm]{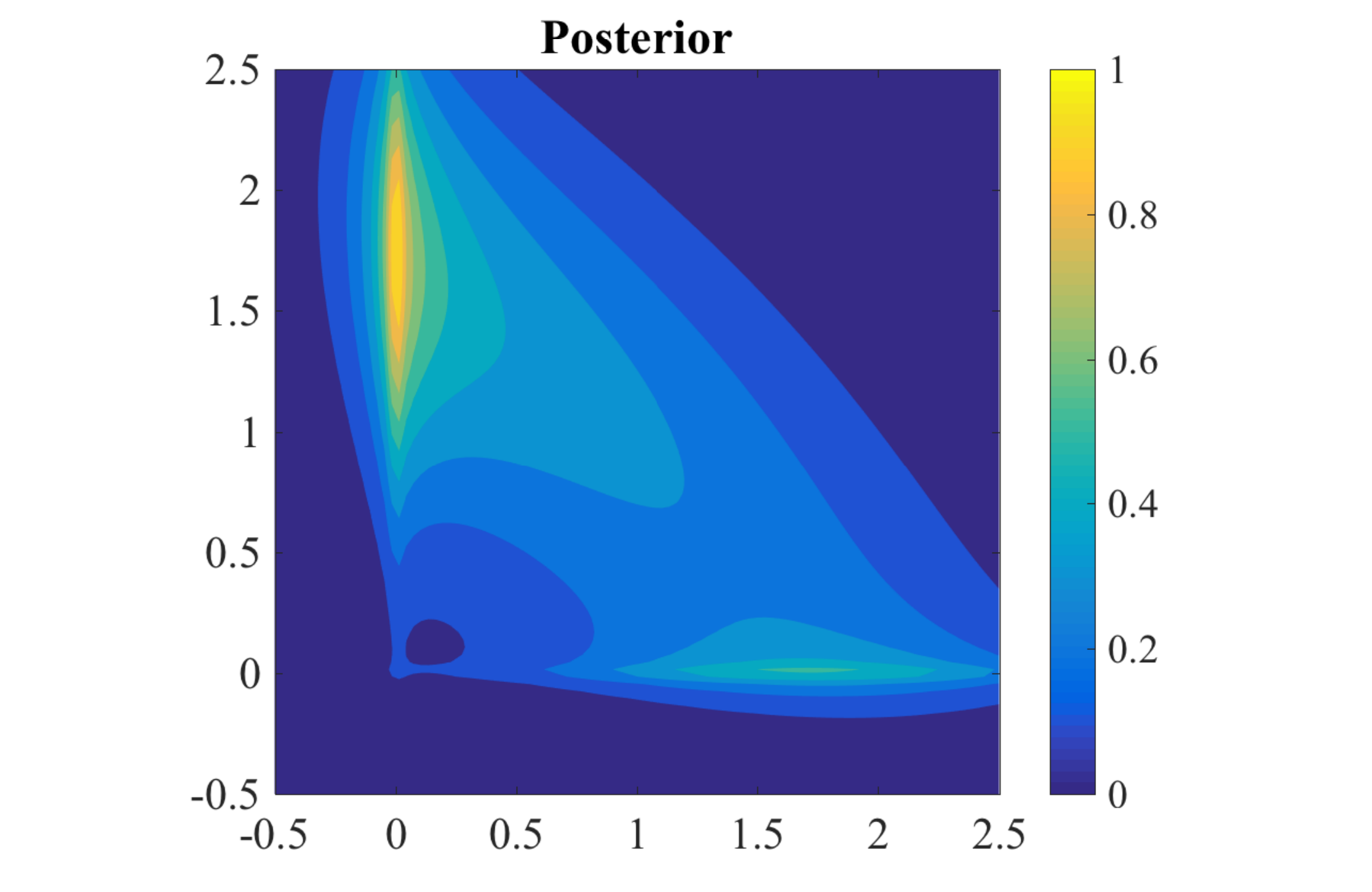} \\
  \includegraphics[width=0.32\textwidth, clip=true, trim= 2cm 0cm 3cm 0cm]{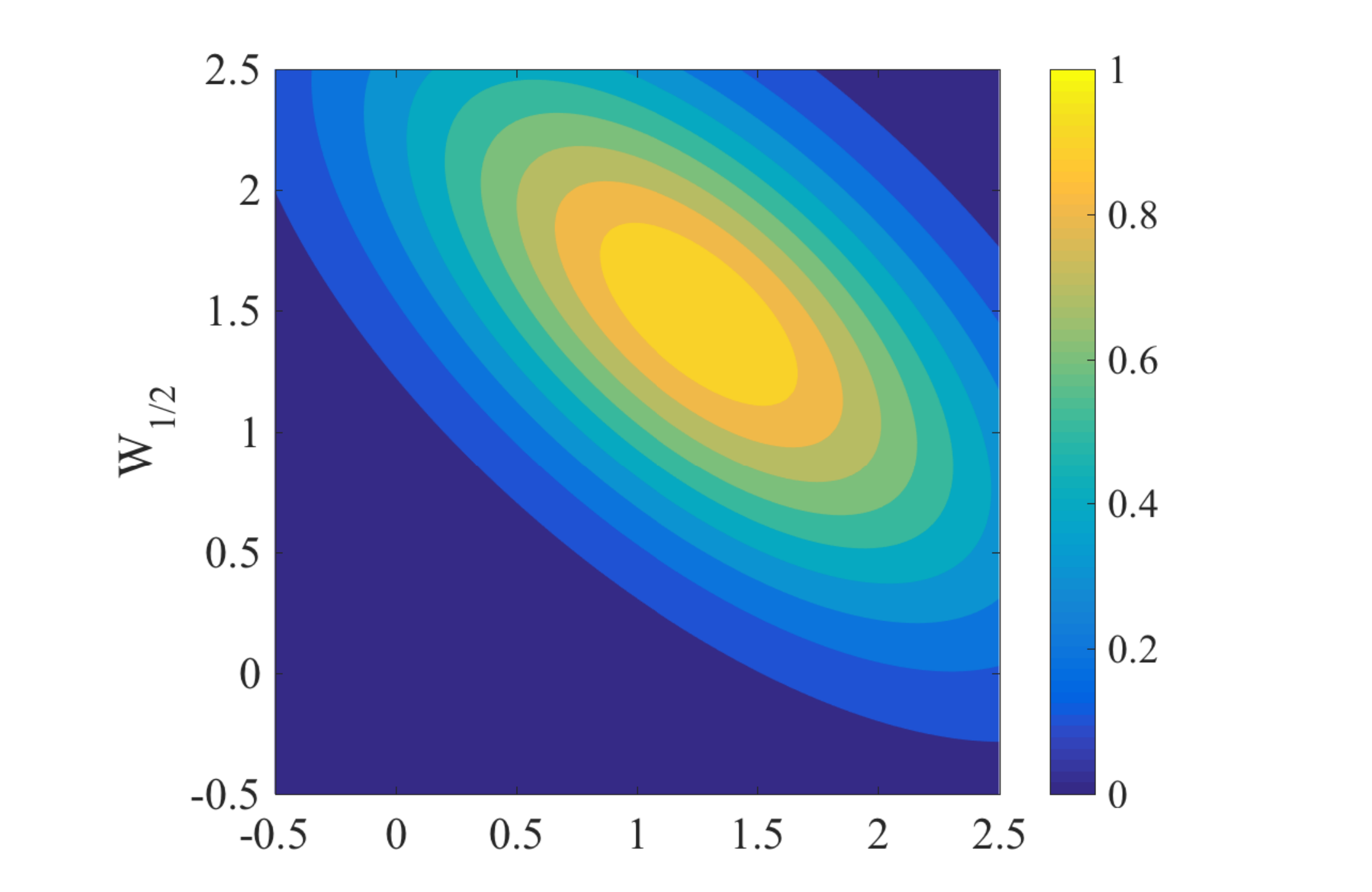}
  \includegraphics[width=0.32\textwidth, clip=true, trim= 2cm 0cm 3cm 0cm]{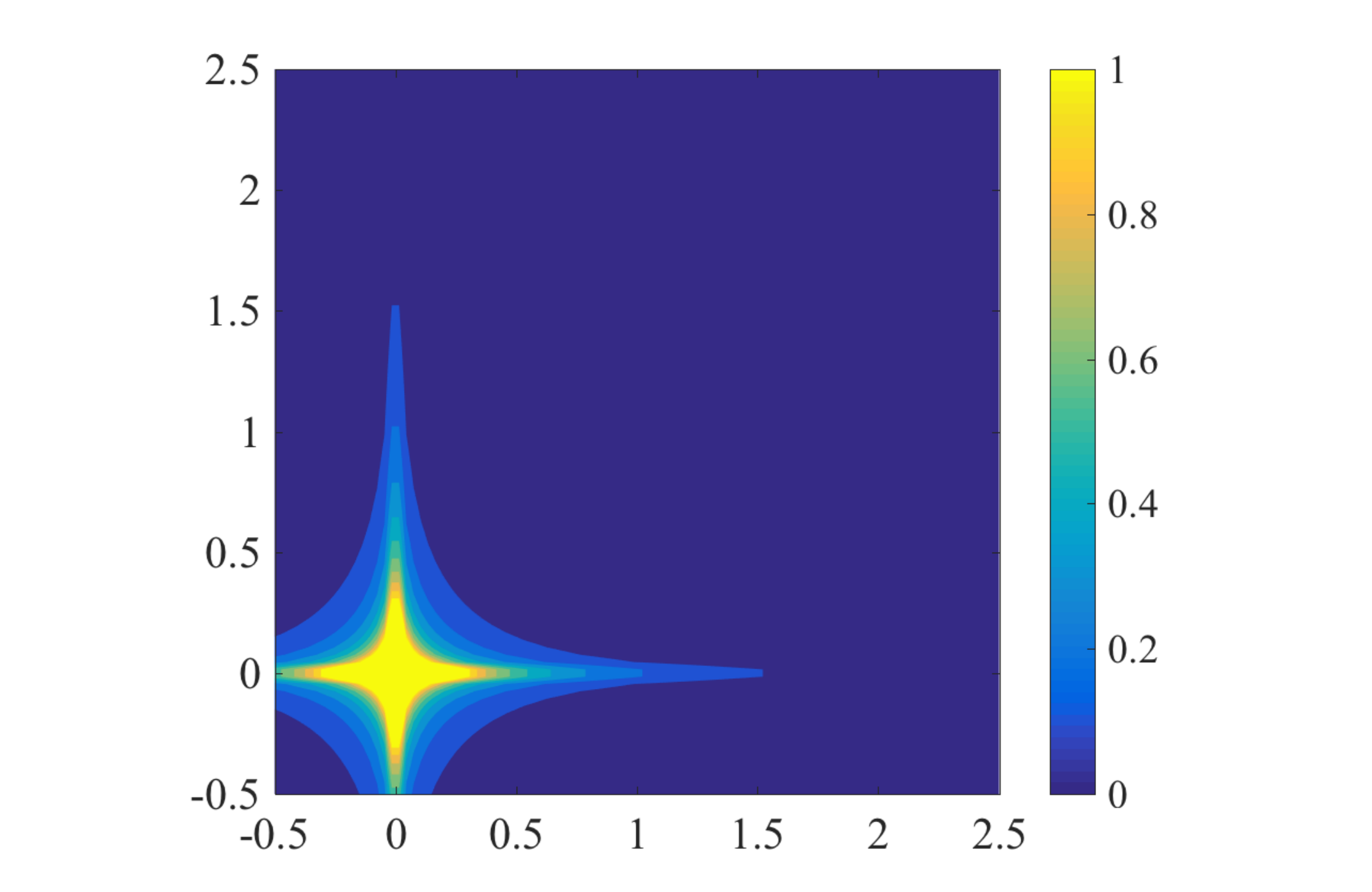}
  \includegraphics[width=0.32\textwidth, clip=true, trim= 2cm 0cm 3cm
  0cm]{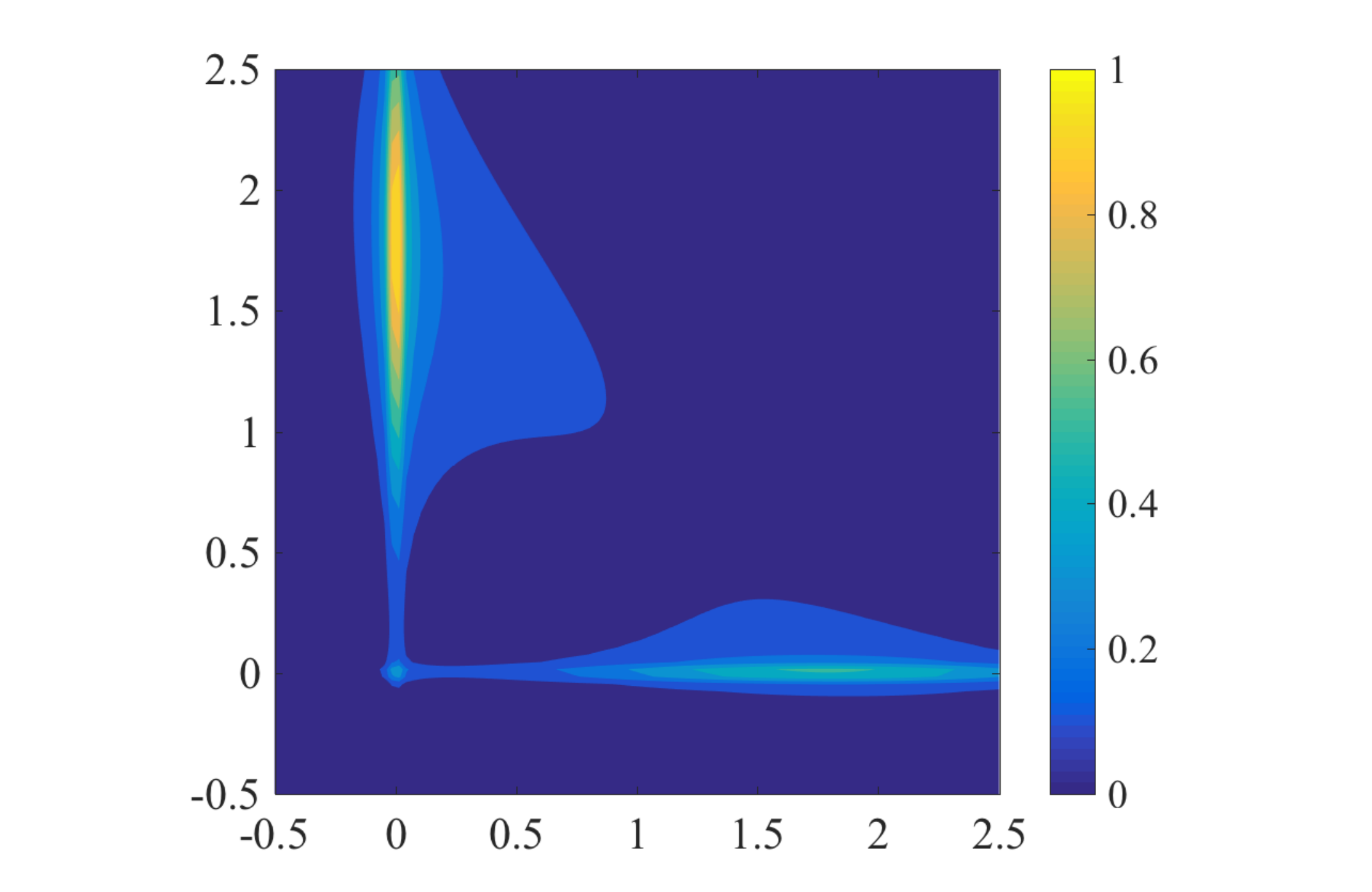} 
  \caption{A prototypical example of densities that arise in the
    solution of Example \ref{example-1} in 2D with the $\ell_{1/2}$ (top row) and
    $W_{1/2}$ priors (bottom row). From left to right columns: The
    likelihood that arises from the additive Gaussian noise model, the
    prior densities and the resulting posteriors. The densities
    are rescaled for
    better visualization.
}
  \label{fig:generic-densities-2D}
\end{figure}

Comparing the distributions \eqref{generalized-normal} and
\eqref{Wp-distribution} suggests the definition of a larger class of
priors that can interpolate between the $\ell_p$ and $W_p$-priors. 
To this end, we introduce a new class of prior measures called the
$G_{p,q}$-priors. The letter
$G$ is chosen due to the connection of the one dimensional version of
these measures to the generalized Gamma distribution \cite{bondesson}.

\begin{definition}[$G_{p,q}$-prior]
  Let $X$ be a Banach space with an unconditional Schauder basis $\{x_k\}$, then we say that a Radon probability measure
$\mu$ is a $G_{p,q}$-prior on $X$ if its samples can be expressed as
$
u = \sum_{k=1}^\infty \gamma_k \xi_k x_k
$
with $\{\gamma_k\} \in \ell^2$ and $\{ \xi_k \} $ is an i.i.d.
sequence of real valued random variables with Lebesgue density
\begin{equation}\label{Cpq-distribution}
\xi_1 \sim \frac{p}{2\alpha \Gamma(q/p)} \left| \frac{t}{\alpha} \right|^{q-1} \exp
\left( - \left|\frac{t}{\alpha} \right|^p \right)d\Lambda(t),
\end{equation}
where $p \in (0, \infty)$ and $\alpha = (\Gamma(q/p)/\Gamma((2 + q)/p))^{1/2}$.
\end{definition}

Using the change of variables $s =
\frac{t^p}{\beta^p}$ we see that for $k, \beta \ge 0$
$$
\begin{aligned}
\int_{0}^\infty t^k \left(\frac{t}{\beta}\right)^{q-1} \exp\left(
  -\left(  \frac{t^p}{\beta^p}\right)
\right) d\Lambda(t)  =\frac{\beta}{p}  \int_{0}^\infty s^{\frac{k + q }{p} -1} \exp\left(
  -s
\right) d\Lambda(s) = \frac{\beta^{k+1}}{p}  \Gamma\left( \frac{k + q}{p} \right).
\end{aligned}
$$
Setting $k =0$ leads us to the normalizing constant in the definition
of the distribution in \eqref{Cpq-distribution}. Furthermore, we
obtain the following expression for the moments of the $G_{p,q}$ distributions
$$
\EE |\xi_1|^s =  \frac{\alpha^s( 1 + (-1)^s) \Gamma\left( (s +
      q)/p \right) }{2\Gamma\left(q/p\right)} \qquad s \in \integers.
$$
In particular 
$
\VV \xi_1 = {\alpha^2 \Gamma( (2 + q)/p)/\Gamma(q/p)} = 1. 
$
Clearly, the $\ell_p$ prior is equivalent to $G_{p,1}$ and 
$W_p$ is equivalent to $G_{p,p}$. Furthermore, the $G_{1,q}$
distribution coincides with a symmetrization of the the Gamma distribution.
 For $q < 1$
the distribution \eqref{Cpq-distribution} will blow up at the origin and so it
will put a lot of its mass at zero. 
The $G_{p,q}$ distributions belong to the
class of ID measures by the following theorem of Bondesson.

\begin{theorem}[{\cite[Cor.~2]{bondesson}}]
  ~All probability density functions on $(0, \infty)$ of the form 
$$
\pi(t) = \frac{p}{\alpha \Gamma(q/p)} \left( \frac{t}{\alpha} \right)^{q-1} \exp
\left( - \left(\frac{t}{\alpha} \right)^p
 \right) 
$$
are ID for $q, \alpha > 0$ and $0<p \le 1$.
\end{theorem}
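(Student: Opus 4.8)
The plan is to prove infinite divisibility by exhibiting $\pi$ as a \emph{hyperbolically completely monotone} (HCM) density and then invoking Bondesson's structural result that every HCM density on $(0,\infty)$ is a generalized Gamma convolution, hence infinitely divisible. First I would remove the scale parameter: since infinite divisibility is preserved under the map $X \mapsto \alpha X$ for $\alpha > 0$, and the stated density is precisely that of $\alpha Y$ when $Y$ has density proportional to $t^{q-1}\exp(-t^p)$, it suffices to treat the case $\alpha = 1$, and the normalizing constant $p/\Gamma(q/p)$ plays no role.

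Recall that a function $f : (0,\infty) \to (0,\infty)$ is HCM if, for every fixed $u > 0$, the map $w \mapsto f(uv)\,f(u/v)$ is completely monotone when expressed as a function of $w = v + v^{-1}$. The argument rests on three standard closure properties of the HCM class: (i) a product of HCM functions is HCM; (ii) $t \mapsto t^{\beta}$ is HCM for every $\beta \in \reals$; and (iii) if $f$ is HCM then $t \mapsto f(t^{\sigma})$ is HCM for every $0 < \sigma \le 1$. The single analytic input I would need is that $f_0(t) = e^{-t}$ is HCM, which is immediate: with $w = v + v^{-1}$ one has $f_0(uv)f_0(u/v) = e^{-u(v + v^{-1})} = e^{-uw}$, and $w \mapsto e^{-uw}$ is completely monotone for each $u > 0$.

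Assembling these pieces, I would first apply (iii) with $\sigma = p$ to the HCM function $f_0$ to conclude that $e^{-t^p}$ is HCM precisely because $0 < p \le 1$; this is exactly the step where the hypothesis $p \le 1$ is essential, and it is the point at which the conclusion genuinely fails for $p > 1$. By (ii) the factor $t^{q-1}$ is HCM for every $q > 0$, and by (i) the product $t^{q-1} e^{-t^p}$, together with its positive constant multiple $\pi(t)$, is HCM. Finally, Bondesson's theorem that an HCM probability density is a generalized Gamma convolution --- and therefore infinitely divisible, its Laplace exponent being a Thorin--Bernstein function --- completes the argument.

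The main obstacle is not the bookkeeping above but the depth of the two imported facts: the composition property (iii), and especially the theorem that HCM densities are generalized Gamma convolutions. Both belong to the technical core of Bondesson's theory and are precisely what makes the corollary nontrivial; a fully self-contained proof would require either reproducing the HCM calculus or, alternatively, computing the L\'{e}vy--Khintchine representation of $\pi$ directly and verifying nonnegativity of its L\'{e}vy density, which for the generalized Gamma is considerably more delicate than the HCM route.
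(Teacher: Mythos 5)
The paper does not prove this statement at all: it is imported verbatim as \cite[Cor.~2]{bondesson}, so there is no in-paper argument to compare yours against. Judged on its own terms, your sketch is correct and is the standard modern route to this fact. The scale reduction to $\alpha=1$ is fine (infinite divisibility is invariant under $X\mapsto \alpha X$, and the stated density is that of $\alpha Y$ with $Y$ having density proportional to $t^{q-1}e^{-t^p}$); the verification that $e^{-t}$ is HCM is the right one-line computation; the closure properties (i)--(iii) you invoke are genuine theorems of the HCM calculus, and property (iii) with $\sigma=p$ is indeed exactly where $0<p\le 1$ enters and where the claim fails for $p>1$ (for $p>1$ the density has Gaussian-or-lighter tails with $q$ not matching, and is in general not ID). The final step, that an HCM probability density is a generalized Gamma convolution and hence infinitely divisible, is Bondesson's structure theorem. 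One historical caveat worth noting: the cited Corollary~2 predates the HCM machinery, and Bondesson's original derivation goes through Thorin's analytic method for generalized Gamma convolutions rather than through HCM closure properties; your route is therefore genuinely different from the cited source's, though both ultimately rest on the Thorin--Bondesson GGC theory. Your version actually buys slightly more than the statement asks for --- it shows the density is a GGC, hence self-decomposable, not merely ID. You are right that the two imported facts carry essentially all the difficulty; as a blind reconstruction of a cited external result, reducing the corollary to those named theorems is an appropriate and complete account.
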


Later on we show that if the $\xi_k$ are distributed according to an
ID distribution then
the corresponding product prior on $X$ will also be an ID probability measure. Then the $G_{p,q}$-priors are also ID.
This fact suggests the question of what other types of
ID measures are good models for compressibility? We know that heavy-tailed
distributions such as the Cauchy or Student's t distributions are 
ID and they incorporate compressible samples as well. Then there is
much to be gained from the study of ID prior measures in Bayesian
inverse problems.
To the best of our knowledge a thorough study of the compressible
behavior of ID distributions is still missing in the
literature. The closest reference in this direction is
 the works of Unser et. al. \cite{unser, unser-unified,
   unser-unified2}. While we do not study the modelling of
 compressible parameters, we recognize the potential impact of ID priors in
 this subject and so we dedicate the remainder of this section to
the study of ID priors.



\subsection{Infinitely divisible priors}
We begin by collecting some results on the class of
ID probability measures on Banach spaces. 
We only present the results that are needed in our exposition and refer
the reader to \cite{linde} for
 a detailed introduction to ID measures on Banach spaces. Further
 reading can be found in the monograph \cite{unser} which contains a
modern treatment of ID probability measures on
 nuclear spaces and
 the books \cite{applebaum, sato,steutel} that
 are good references on the theory of ID measures in finite
 dimensions. 

Recall that
given a Borel probability measure $\mu$ on a Banach space $X$ 
its characteristic function $\hat{\mu} : X^\ast \mapsto \complex$ is given by
$$
\hat{\mu}(\varrho) = \int_X \exp( i \varrho(u) ) \dd \mu(u) \qquad \forall
\varrho \in X^\ast.
$$
Characteristic functions play a crucial role in our
discussion of ID measures in this section. In what
follows $\nu^{\ast n}$ denotes the $n$-fold convolution
of a measure $\nu$ with itself.



 \begin{definition}[ID measures {{\cite{linde}}}] \label{definition-ID-measures}
A Radon probability measure $\mu$ on a Banach space $X$ is 
called an infinitely divisible measure if for each $n \in \integers$ there exists a
Radon probability measure $\mu_{1/n}$ so that 
$
\mu = (\mu_{1/n})^{\ast n}.
$
Equivalently, the probability measure $\mu$ is ID
if $ \hat{\mu}(\varrho) = (\hat{\mu}_{1/n}(\varrho))^n, \forall \varrho \in X^\ast.
$
 \end{definition}

Put simply, a real valued random variable $\xi$ is distributed
according to an ID measure if for every $n \in \integers$ one can
find a collection of i.i.d. random variables $\{\xi_k\}_{k=1}^n$ so that
 $\xi \dequal \sum_{k=1}^n \xi_{k}$. Examples of such distributions
 include Gaussian, Laplace, Gamma,  log-normal, Cauchy and
 Student's-t.
More examples can be found in the monograph
\cite{steutel} where ID distributions on $\reals$ are studied in
detail. We note that an equivalent definition of an ID measure
is given
as
the
 law of a L{\'e}vy process terminated at unit time. 
However,  we will
 not use this definition in order to avoid the technicalities of
 dealing with
 L{\'e}vy processes
but instead we refer the interested reader to
 the monographs \cite{peszat, tankov} for further reading.
The proof of the next theorem can be found in \cite[Sec.~5.1]{linde}. 

\begin{theorem}\label{id-measure-properties}
Let $\mu$ be an ID probability measure on a Banach space $X$. Then 
\begin{enumerate}[(i)]
\item $\hat{\mu}(\varrho) \neq 0$ for all $\varrho \in X^\ast$.
\item There exists a unique and continuous (in the dual norm) function $\psi: X^\ast \mapsto
  X$ so that $\hat{\mu}(\varrho) = \exp( \psi(\varrho))$ and $\psi(0)= 0$.
\item If $\mu$ is symmetric, i.e. $\mu(A) = \mu(-A)$ for all Borel
  subsets $A$ of $X$,
then $\hat{\mu}$ is real valued and
  positive.
\item  For every $n \in \integers$ the measures $\mu_{1/n}$ are
  uniquely determined and 
$\hat{\mu}_{1/n}(\varrho) = \exp( n^{-1} \psi(\varrho))$ for all $\varrho \in X^\ast$.
\end{enumerate}
\end{theorem}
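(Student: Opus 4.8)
The plan is to push the problem down to the classical one-dimensional theory of infinitely divisible laws and then lift the resulting pointwise statements back to $X^\ast$ using only the vector-space topology. The basic device is the image measure $\mu_\varrho := \mu \circ \varrho^{-1}$ on $\reals$ for a fixed $\varrho \in X^\ast$: since $\int_\reals e^{its}\,\dd\mu_\varrho(s) = \int_X e^{it\varrho(u)}\,\dd\mu(u) = \hat\mu(t\varrho)$, the scalar characteristic function of $\mu_\varrho$ is $t \mapsto \hat\mu(t\varrho)$. Because $\varrho$ is additive, taking images commutes with convolution, so $\mu_\varrho = (\mu_{1/n}\circ\varrho^{-1})^{\ast n}$ for every $n \in \integers$; hence each $\mu_\varrho$ is an infinitely divisible probability measure on $\reals$. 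I also record that $\hat\mu$ is continuous in the dual norm, since $\|\varrho_k - \varrho\|_{X^\ast} \to 0$ forces $\varrho_k(u) \to \varrho(u)$ pointwise and $|e^{i\varrho_k(u)}| = 1$, so dominated convergence applies.

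For (i) I would invoke the classical fact that a one-dimensional infinitely divisible characteristic function never vanishes, applied to $\mu_\varrho$. The scalar argument runs as follows: writing $\phi(t)=\hat\mu(t\varrho)$ and using $\phi = \phi_n^n$ for the $n$-th roots coming from infinite divisibility, the moduli satisfy $|\phi_n|^2 = |\phi|^{2/n}$, and each $|\phi_n|^2$ is itself a characteristic function (that of a symmetrization). As $n \to \infty$ these converge pointwise to the indicator $\mathbf{1}_{\{\phi \neq 0\}}$, which is continuous at the origin because $\phi$ is continuous with $\phi(0)=1$. L\'evy's continuity theorem then forces the limit to be a genuine (everywhere continuous) characteristic function, and a $\{0,1\}$-valued continuous function equal to $1$ at $0$ must be identically $1$; thus $\phi$ never vanishes. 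Applying this to each $\mu_\varrho$ gives $\hat\mu(t\varrho) \neq 0$ for all $t \in \reals$, and $t=1$ yields (i). I expect this step to be the crux: the honest difficulty is not the scalar computation but ensuring that the reduction to $\reals$ is legitimate, so that I never have to appeal to a L\'evy continuity theorem directly on $X$, which is delicate in infinite dimensions.

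Statement (ii) (where $\psi$ should be read as $\complex$-valued) is then a lifting argument: $\hat\mu \colon X^\ast \to \complex\setminus\{0\}$ is continuous by the remark above and nonvanishing by (i), and $X^\ast$ is a vector space, hence contractible, so $\hat\mu$ lifts through the exponential covering $\exp\colon \complex \to \complex\setminus\{0\}$ to a continuous $\psi$ normalized by $\psi(0)=0$. Uniqueness follows since two such lifts differ by a continuous $\integers$-valued multiple of $2\pi i$, which is constant on the connected space $X^\ast$ and vanishes at the origin. For (iii), symmetry gives $\hat{\mu}(\varrho) = \overline{\hat\mu(\varrho)}$ (the image under $\varrho$ of a symmetric $\mu$ is a symmetric scalar law), so $\hat\mu$ is real-valued; being real, continuous, nonvanishing, equal to $1$ at $\varrho=0$, and defined on the connected set $X^\ast$, it cannot change sign and is therefore strictly positive.

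Finally, for (iv) both $\hat\mu_{1/n}$ and $\exp(n^{-1}\psi)$ are continuous, take the value $1$ at $\varrho = 0$, and have $n$-th power equal to $\hat\mu$; their quotient is therefore a continuous function valued in the $n$-th roots of unity and equal to $1$ at the origin, hence identically $1$ by connectedness of $X^\ast$. This shows $\hat\mu_{1/n} = \exp(n^{-1}\psi)$, and since a Radon probability measure on $X$ is determined by its characteristic function, $\mu_{1/n}$ is uniquely determined. Apart from step (i), every part of the argument is soft, relying only on connectedness and contractibility of $X^\ast$ together with the continuity and non-vanishing of $\hat\mu$; the entire weight of the theorem rests on the scalar non-vanishing result transported through the image measures $\mu_\varrho$.
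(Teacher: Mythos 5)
The paper does not actually prove this theorem; it cites \cite[Sec.~5.1]{linde} and moves on. Your blind argument is correct and is, in substance, the standard proof one finds in that reference and in Sato's book: push $\mu$ forward along each $\varrho$ to get a one-dimensional ID law, use the classical $|\phi_n|^2=|\phi|^{2/n}$ plus L\'evy continuity argument to get non-vanishing, and then handle (ii)--(iv) by soft topology (the distinguished logarithm, realized here as a lift through the covering $\exp:\complex\to\complex\setminus\{0\}$ over the contractible, locally path-connected space $X^\ast$). Two points you handled that are worth making explicit: the statement's ``$\psi:X^\ast\mapsto X$'' is a typo for $\complex$-valued, as you read it; and the final step of (iv) --- that equality of characteristic functionals forces $\mu_{1/n}$ to be unique --- genuinely needs the Radon hypothesis built into the paper's Definition~\ref{definition-ID-measures}, since a general Borel measure on a non-separable Banach space is not determined by its characteristic functional. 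With that caveat acknowledged, the proof is complete.
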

Furthermore, we define the function  
$$
\Psi(u, \varrho) := \exp( i
\varrho (u)) - 1 - i \varrho (u) \mb{1}_{B_X} (u)  \qquad \forall u
\in X, \varrho \in X^\ast,
$$
where $B_X$ is the unit ball in $X$ and $\mb{1}_{B_X}$ is the
characteristic function of the unit ball. We recall the
definition of a
 L{\'e}vy measure on a Banach space.
\begin{definition}[L{\'e}vy Measure] \label{levy-measure-definition}
  A positive  $\sigma$-finite Radon measure $\lambda$ on $X$ is called a L{\'e}vy measure
  if and only if 
  \begin{enumerate}
  \item $\lambda(\{ 0 \}) = 0$.
\item $\int_X | \Psi( u, \varrho) | \dd \lambda(u) < \infty$ for every
  $\varrho \in X^\ast$. 
\item $\exp( \int_X \Psi(u, \varrho) \dd \lambda(u))$ is the
  characteristic function of a Radon probability measure on $\reals$ for every
  $\varrho \in X^\ast$.
  \end{enumerate}
\end{definition}
We are now ready to present the celebrated L{\'e}vy-Khintchine representation
theorem (see \cite[Sec.~5.7]{linde} for a proof):
\begin{theorem}[L{\'e}vy-Khintchine representation]\label{levy-khintchine-representation}
  A Radon probability measure on a Banach space $X$ is infinitely
  divisible if and only if there exists an element $m \in X$, a
  (positive definite) covariance operator $\mcl{R}: X^\ast \mapsto X$ and a
 L{\'e}vy measure $\lambda$, so that 
\begin{equation}\label{levy-khintchine-formula}
\hat{\mu}(\varrho) = \exp( \psi(\varrho)) \quad \text{where} \quad
\psi(\varrho) = i \varrho (m) - \frac{1}{2} \varrho ( \mcl{R}(\varrho)) + \int_X \Psi( u,
\varrho) \dd \lambda(u).
\end{equation}
Equivalently, $\mu$ is ID precisely when there
exists a point mass $\delta_m$, a Gaussian measure $\mcl{N}(0, \mcl{R})$
and a Radon measure $\nu$ identified via $\hat{\nu}(\varrho) = \int_X \Psi(u,
\rho) \dd \lambda (u)$ so that 
\begin{equation}\label{levy-khintchine-decomposition}
\mu = \delta_m \ast \mcl{N}(0, \mcl{R}) \ast \nu.
\end{equation}
\end{theorem}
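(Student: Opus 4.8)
The plan is to prove the two implications of Theorem~\ref{levy-khintchine-representation} separately, relying throughout on the structural facts established in Theorem~\ref{id-measure-properties}: for an ID measure $\mu$ the characteristic function never vanishes, admits a unique continuous logarithm $\psi$ with $\psi(0)=0$, and its $n$-th convolution roots satisfy $\hat{\mu}_{1/n}(\varrho)=\exp(n^{-1}\psi(\varrho))$ for all $\varrho \in X^\ast$.

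For the \emph{sufficiency} direction I would build $\mu$ through the convolution \eqref{levy-khintchine-decomposition}. The factor $\delta_m$ is trivially ID and the Gaussian $\mcl{N}(0,\mcl{R})$ is ID with roots $\mcl{N}(0,\mcl{R}/n)$; since a convolution of ID measures is again ID, it remains only to treat the jump factor $\nu$. I would approximate the L{\'e}vy measure $\lambda$ by its restrictions $\lambda_\epsilon := \lambda|_{\{\|u\|_X > \epsilon\}}$, which are finite measures because, as is standard for L{\'e}vy measures, $\lambda$ carries finite mass outside any neighbourhood of the origin. Each $\lambda_\epsilon$ defines a compensated compound Poisson measure $\nu_\epsilon$, which is manifestly ID, and I would show $\hat{\nu}_\epsilon(\varrho) \to \exp(\int_X \Psi(u,\varrho)\,\dd\lambda(u))$ for every $\varrho$ by dominated convergence, the domination coming from condition (2) of Definition~\ref{levy-measure-definition}. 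Condition (3) of that definition then guarantees that the limit is the characteristic function of a genuine Radon probability measure $\nu$, and being a limit of ID measures $\nu$ is itself ID.

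For the \emph{necessity} direction, which carries the real difficulty, I would start from $\hat{\mu}=\exp\psi$ and exploit the elementary expansion
\[
n\bigl(\hat{\mu}_{1/n}(\varrho)-1\bigr)=n\bigl(e^{\psi(\varrho)/n}-1\bigr)\longrightarrow \psi(\varrho),
\]
rewriting the left-hand side as $\int_X (e^{i\varrho(u)}-1)\,\dd\lambda_n(u)$ with the accompanying measures $\lambda_n := n\,\mu_{1/n}$. The candidate L{\'e}vy measure $\lambda$ should then emerge as a vague limit of the restrictions of $\lambda_n$ to $X \setminus \{0\}$: I would first establish tightness of $\{\lambda_n|_{\{\|u\|_X>\epsilon\}}\}$ for each fixed $\epsilon>0$, pass to a limit, and verify that the resulting $\lambda$ satisfies the three conditions of Definition~\ref{levy-measure-definition}. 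The drift $m$ arises from the compensator $-i\varrho(u)\mb{1}_{B_X}(u)$ needed to keep the small-jump integral convergent, while the Gaussian part $\mcl{R}$ is recovered as the residual quadratic term surviving after the jump integral is subtracted, i.e. it encodes the accumulation of small jumps in the limit $\epsilon \to 0$. Uniqueness of the triple $(m,\mcl{R},\lambda)$ would follow from the uniqueness of $\psi$ together with the distinct scaling behaviour of the three summands in \eqref{levy-khintchine-formula} under $\varrho \mapsto t\varrho$ (linear, quadratic, and a bounded oscillatory jump term).

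The main obstacle, and the reason this is substantially harder than the finite-dimensional L{\'e}vy-Khintchine theorem, is proving tightness of the accompanying measures $\{\lambda_n\}$ on the infinite-dimensional space $X$ and ensuring that all limiting objects are Radon rather than merely cylindrical or finitely additive. In $\reals^d$ the requisite compactness is automatic from boundedness, but on a Banach space one must invoke the Radon hypothesis on $\mu$ and a Prokhorov-type argument to bound, uniformly in $n$, the mass that $\lambda_n$ places on complements of compact sets. A secondary subtlety is the clean separation of the Gaussian component (the small-jump limit) from the jump component so that $\mcl{R}$ is a bona fide covariance operator, which calls for a careful truncation at scale $\epsilon$ followed by a diagonal argument as $\epsilon \to 0$ and $n \to \infty$. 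I would therefore concentrate all of the hard analysis into these tightness and separation lemmas, after which identifying $\psi$ with the right-hand side of \eqref{levy-khintchine-formula} becomes a routine computation.
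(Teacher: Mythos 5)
The paper does not prove this theorem: it is stated as a quoted classical result with an explicit pointer to \cite[Sec.~5.7]{linde}, so there is no in-paper argument to compare yours against. Your outline is the standard accompanying-laws route (essentially the strategy of Linde and of Araujo--Gin\'e): sufficiency by convolving the three factors and approximating the jump part by compensated compound Poisson laws, necessity via $n(\hat{\mu}_{1/n}-1)\to\psi$ and vague limits of the measures $n\,\mu_{1/n}$ away from the origin. Structurally this is the right proof, and you correctly locate the genuinely hard content in the tightness and Radon-ness of the limiting objects, which is exactly what separates the Banach-space theorem from its $\reals^d$ ancestor.

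That said, as a proof the proposal is a roadmap rather than an argument, and two of the steps you wave through deserve flagging. First, you assert that $\lambda(\{\|u\|_X>\epsilon\})<\infty$ is ``standard for L\'evy measures''; with Definition~\ref{levy-measure-definition} as given this is not immediate, because for a fixed $\varrho$ the integrand $|\Psi(u,\varrho)|$ vanishes on the kernel of $\varrho$ even for large $\|u\|_X$, so condition (2) alone does not bound the mass of $\lambda$ off neighbourhoods of the origin. The finiteness does hold on Banach spaces, but it is itself a theorem requiring the Radon/tightness hypotheses, not a consequence of the stated definition. Second, in the sufficiency direction you pass from pointwise convergence $\hat{\nu}_\epsilon(\varrho)\to\hat{\nu}(\varrho)$ to ``$\nu$ is a limit of ID measures, hence ID''; in infinite dimensions pointwise convergence of characteristic functionals does not imply weak convergence, so to invoke a result like \cite[Thm.~5.6.2]{linde} you again need a tightness argument for the family $\{\nu_\epsilon\}$ (or a direct verification that $\exp(n^{-1}\int_X\Psi\,\dd\lambda)$ is a characteristic functional for every $n$). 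These are precisely the lemmas you defer, so the proposal is best read as a correct plan whose entire difficulty remains in the deferred parts --- which is a reasonable posture given that the paper itself delegates the whole proof to the literature.
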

The L{\'e}vy-Khintchine representation implies that the triple $(m, \mcl{R},
\lambda)$ completely identifies an ID measure $\mu$ and so we use the
shorthand 
notation $\mu =\text{ID}(m, \mcl{R}, \lambda)$. To gain more insight into the implications of the
L{\'e}vy-Khintchine representation we recall the class of compound
Poisson random variables and their corresponding probability measures.

\begin{definition}[Compound Poisson probability measure
  {\cite[Sec.~5.3]{linde}}] \label{def-compound-poisson}
 ~Let
$\eta$ be a Radon probability measure on a Banach space $X$ and
suppose that $\{u_k\}$ is a sequence of i.i.d. random variables so that
$u_k \sim \eta$. Also,
  let $\tau$ be an independent Poisson random variable with rate $c > 0$ 
taking values in $\mbb{Z}_+$. Then 
$
u = \sum_{k=0}^\tau u_k
$
is distributed according to a compound Poisson probability measure
denoted by $\text{CPois}(c, \eta)$.
\end{definition}

It is straightforward to check that the characteristic function of a
compound Poisson measure has the form 
$$
\widehat{\text{CPois}(c, \eta)}(\varrho) = \exp \left( c \int_X (\exp( i \varrho (u)) -
1) \:
\dd \eta(u) \right) \qquad \forall \varrho \in X^\ast.
$$
See \cite[Prop.~5.3.1]{linde} for a proof of this formula along
with the fact that $\text{CPois}(c,\eta)$ is a Radon measure on $X$.

Now let us return to the characteristic function of the
 probability measure $\nu$ that was introduced
in the L{\'e}vy-Khintchine representation \eqref{levy-khintchine-decomposition}
\begin{equation}\label{pure-jump-eq}
\begin{split}
\hat{\nu}(\varrho)
&= 
\exp\left( \int_X (\exp(i \varrho (u)) - 1)\: \dd \lambda(u) \right)
\exp\left( \int_{B_X} - i \varrho (u) \: \dd \lambda(u) \right).
\end{split}
\end{equation}
If $0<\lambda(X) <\infty$ then $\lambda$ can be renormalized to
define a probability measure  $\tilde{\lambda} :=
\frac{1}{\lambda(X)} \lambda$.
 Furthermore, we can define an element $u_\lambda \in X$ so that 
$$
\varrho(u_\lambda) = -\int_X \varrho(u) \mb{1}_{B_X}(u) \dd \lambda(u) \qquad
\forall \varrho \in X^\ast. 
$$
 Putting
these observations together with
\eqref{pure-jump-eq} gives the decomposition
\begin{equation}\label{pure-jump-decomp}
\nu = \text{CPois}(\lambda(X), \tilde{\lambda}) \ast \delta_{u_\lambda}.
\end{equation}

Therefore, from
\eqref{levy-khintchine-decomposition} 
we deduce that any measure
$\mu= \text{ID}(m, \mcl{R}, \lambda)$ with $\lambda(X) <
\infty$ can be decomposed as

\begin{equation}\label{ID-decomposition-centered}
\mu= (\delta_{m + u_\lambda}) \ast
\mcl{N}(0, \mcl{R}) \ast \text{CPois}(\lambda(X), \tilde{\lambda}).
%
\end{equation}
In the remainder of this article we will restrict our attention to the
case of ID
measures with $\lambda(X) < \infty$.
Since the tail behavior of prior measures is of importance to our
well-posedness results in
Section \ref{sec:well-posedness} we now present some results
concerning the tail behavior of ID measures. We begin with 
the notion of a submultiplicative function. 

\begin{definition}[Submultiplicative function]
  A non-negative, non-decreasing and locally bounded
function $h:\reals \mapsto \reals^+$ is called submultiplicative if it
  satisfies 
$$
h(t + s) \le  C h(t) h(s) \qquad \forall t,s \in \reals
$$
with an independent constant $C >0$
\end{definition}

Our interest in the class of submultiplicative functions arises from the next theorem
that describes some of the properties of this class. 
\begin{theorem}[{\cite[Prop.~25.4]{sato}}] \label{submultiplicative-functions}
 \myhl{ \begin{enumerate}[(i)]
  \item The product of two submultiplicative functions is also
    submultiplicative. 
\item If $h$ is submultiplicative then so is $(h(a t + b))^\alpha$ for
  constants $a, b\in \reals$ and $\alpha>0$.
\item The functions $\max\{ 1, |t|\}$ and $\exp( |t|^\beta)$ for
  $\beta \in (0,1)$ are submultiplicative.
  \end{enumerate}}
\end{theorem}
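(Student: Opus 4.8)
The plan is to reduce all three parts to the single defining inequality $h(t+s) \le C\, h(t) h(s)$: in each case non-negativity and local boundedness are inherited without effort, so the only substantive task is to exhibit a suitable constant $C$. (The monotonicity clause of the definition plays no role in the arguments below and is in any case to be read as monotonicity in $|t|$, which is what the even examples in (iii) satisfy.) For (i), if $h_1, h_2$ are submultiplicative with constants $C_1, C_2$, then $h = h_1 h_2$ satisfies $h(t+s) = h_1(t+s)\, h_2(t+s) \le (C_1 C_2)\, h_1(t) h_1(s) h_2(t) h_2(s) = (C_1 C_2)\, h(t) h(s)$, so $C = C_1 C_2$ works.

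For (ii) I would build the map $t \mapsto (h(at+b))^\alpha$ out of three elementary operations, each of which preserves submultiplicativity, and compose them. First, scaling: $t \mapsto h(at)$ obeys $h(a(t+s)) = h(at+as) \le C\, h(at) h(as)$, so it is submultiplicative with the same constant $C$ for every $a \in \reals$ (note that no monotonicity is needed, so negative $a$ is admissible). Second, the power: raising the defining inequality to the $\alpha > 0$ gives $h(t+s)^\alpha \le C^\alpha\, h(t)^\alpha h(s)^\alpha$, so $h^\alpha$ is submultiplicative with constant $C^\alpha$.

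The delicate step, which I expect to be the \emph{main obstacle}, is the translation $t \mapsto h(t+c)$ with $c = b/a$. The naive expansion $h((t+s)+c) = h((t+c)+s) \le C\, h(t+c)\, h(s)$ produces the factor $h(s)$ rather than the desired $h(s+c)$, and there is no way to close the estimate in one step. Here I would invoke submultiplicativity a second time, in the form $h(s) = h\big((s+c) + (-c)\big) \le C\, h(s+c)\, h(-c)$, where $h(-c) < \infty$ is a fixed constant precisely because $h$ is locally bounded. Combining the two bounds yields $h((t+s)+c) \le C^2 h(-c)\, h(t+c)\, h(s+c)$, so the translate is submultiplicative with constant $C^2 h(-c)$. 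Chaining scaling, translation, and the power then establishes (ii); this is also the one place where the local boundedness hypothesis is genuinely used.

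For (iii) I would verify each example directly. For $\exp(|t|^\beta)$ with $\beta \in (0,1)$, submultiplicativity is equivalent to $|t+s|^\beta \le |t|^\beta + |s|^\beta$, which follows from the triangle inequality combined with the subadditivity of $x \mapsto x^\beta$ on $\reals_+$ (valid for $\beta \in (0,1]$, since $x^\beta \ge x$ for $x \in [0,1]$); this in fact gives the constant $C = 1$. For $f(t) = \max\{1,|t|\}$, I would note that $|t| \le f(t) \le f(t) f(s)$ and symmetrically $|s| \le f(t) f(s)$, so $|t+s| \le |t| + |s| \le 2 f(t) f(s)$, while trivially $1 \le f(t) f(s)$; hence $f(t+s) = \max\{1,|t+s|\} \le 2 f(t) f(s)$, i.e. $C = 2$. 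In both examples non-negativity, local boundedness, and monotonicity in $|t|$ are evident, which completes the verification.
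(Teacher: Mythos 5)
Your proof is correct. Note first that the paper contains no proof of this statement at all --- it is imported verbatim as \cite[Prop.~25.4]{sato} --- so there is nothing internal to compare against; your argument is essentially the standard one from Sato's book (reduce to the three elementary operations of scaling, translation, and raising to a positive power, then verify the two examples by hand). You correctly identify the only step requiring an idea, namely the translation $t \mapsto h(t+c)$, and handle it the right way by applying the defining inequality a second time to get $h(s) \le C\,h(s+c)\,h(-c)$, which is exactly where local boundedness (really just finiteness of $h(-c)$) enters. Two trivial loose ends, neither of which affects correctness: your reduction via $c = b/a$ silently assumes $a \neq 0$ (if $a=0$ the function $t \mapsto (h(b))^\alpha$ is constant, hence trivially submultiplicative), and the constant $C^2 h(-c)$ is only usable when $h(-c) > 0$ --- but if $h$ vanishes at any point the inequality $h(t) \le C\,h(t-t_0)h(t_0)$ forces $h \equiv 0$, which is again trivially submultiplicative. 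Your reading of the paper's ``non-decreasing'' clause as monotonicity in $|t|$ is also the right one, since otherwise the paper's own examples in (iii) would not satisfy its own definition.
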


We now present a theorem that relates the tail behavior of an ID
measure to that of its L{\'e}vy measure.
 This result was originally proven by Kruglov \cite{kruglov} for
ID random variables on $\reals$ with L{\'e}vy measures that are not necessarily finite. \myhl{Different generalizations 
of this result to larger spaces are also available in the literature. For example, see
\cite[Thm.~25.3]{sato} for extension to $\reals^n$ and \cite[Prop.~6.9]{peszat}
for Hilbert space valued L{\'e}vy processes. For the reader's convenience
 we briefly prove this result for Banach space valued ID random variables with 
 finite L{\'e}vy measures.}    

\begin{theorem}\label{g-moment-of-ID-measure}
Let $X$ be a Banach space and $\lambda$ be a 
L{\'e}vy measure so that $0<\lambda(X) < \infty$. Suppose that $u \sim
\mu =
  \text{ID}(m, \mcl{R}, \lambda)$,
$\mu(X) = 1$ and $\| \cdot \|_X < \infty$ $\mu$-a.s. Then, given a
submultiplicative function $h$ we have that
$h(\| \cdot \|_X) \in L^1(X,
\mu)$ if $h(\|\cdot\|_X) \in L^1(X, \lambda)$.
\end{theorem}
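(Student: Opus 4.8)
The plan is to exploit the convolution decomposition \eqref{ID-decomposition-centered}, which, since $\lambda(X) < \infty$, lets me represent a sample $u \sim \mu$ as a sum of three independent pieces,
$$u \dequal (m + u_\lambda) + g + S,$$
where $m + u_\lambda \in X$ is a deterministic shift, $g \sim \mcl{N}(0, \mcl{R})$ is Gaussian, and $S \sim \text{CPois}(\lambda(X), \tilde{\lambda})$ with $\tilde{\lambda} = \lambda/\lambda(X)$. The claim $h(\|\cdot\|_X) \in L^1(X, \mu)$ is just the finiteness of $\EE\, h(\|u\|_X)$. Because $h$ is non-decreasing and submultiplicative, iterating Theorem \ref{submultiplicative-functions} gives the elementary bound $h(t_1 + \cdots + t_n) \le C^{n-1} h(t_1) \cdots h(t_n)$; applied with three terms this yields $h(\|u\|_X) \le C^2 h(\|m + u_\lambda\|_X)\, h(\|g\|_X)\, h(\|S\|_X)$, so by independence I would reduce the problem to
$$\EE\, h(\|u\|_X) \le C^2\, h(\|m + u_\lambda\|_X)\, \EE\, h(\|g\|_X)\, \EE\, h(\|S\|_X),$$
and bound the Gaussian and compound Poisson factors separately.

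For the Gaussian factor I would use the standard fact that any submultiplicative function grows at most exponentially, i.e.\ $h(t) \le a\, e^{b|t|}$ for some constants $a, b > 0$. Combining this with Fernique's theorem, which guarantees $\int_X \exp(\epsilon \|g\|_X^2)\, \dd \mcl{N}(0, \mcl{R}) < \infty$ for sufficiently small $\epsilon > 0$, together with the pointwise inequality $bt \le \epsilon t^2 + b^2/(4\epsilon)$, gives $\EE\, h(\|g\|_X) \le a\, e^{b^2/(4\epsilon)}\, \EE \exp(\epsilon \|g\|_X^2) < \infty$. This factor thus requires no hypothesis on $\lambda$.

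The compound Poisson factor is the heart of the argument. Writing $S = \sum_{k=1}^{\tau} u_k$ with $\tau$ Poisson of rate $c := \lambda(X)$ and i.i.d.\ $u_k \sim \tilde{\lambda}$, I would condition on $\tau = n$ and reuse the iterated submultiplicativity estimate to get $\EE\big[\, h(\|S\|_X) \mid \tau = n \,\big] \le C^{n-1} m^n$, where $m := \int_X h(\|u\|_X)\, \dd \tilde{\lambda}(u) = \lambda(X)^{-1} \int_X h(\|u\|_X)\, \dd \lambda(u)$. This is exactly the point where the hypothesis enters: $h(\|\cdot\|_X) \in L^1(X, \lambda)$ is precisely what makes $m < \infty$. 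Summing against the Poisson weights then gives
$$\EE\, h(\|S\|_X) \le e^{-c}\left( h(0) + \frac{1}{C} \sum_{n=1}^\infty \frac{(C c m)^n}{n!} \right) = e^{-c}\left( h(0) + \frac{1}{C}\big( e^{C c m} - 1 \big) \right) < \infty.$$

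The main obstacle I anticipate is precisely this compound Poisson estimate: producing a clean per-order bound from submultiplicativity and then checking that the resulting power series converges. The key observation that resolves it is that the factorial decay of the Poisson weights dominates the geometric factor $C^{n-1}$, so the series converges for every value of $c$, $C$ and $m$ — no smallness is needed. Assembling the three bounds yields $\EE\, h(\|u\|_X) < \infty$, which is the assertion $h(\|\cdot\|_X) \in L^1(X, \mu)$.
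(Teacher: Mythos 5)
Your proposal is correct and follows essentially the same route as the paper's proof: the convolution decomposition \eqref{ID-decomposition-centered} into a shift, a Gaussian part, and a compound Poisson part; iterated submultiplicativity to factor the expectation; the exponential growth bound $h(t)\le a e^{b|t|}$ combined with Fernique's theorem for the Gaussian factor; and conditioning on the Poisson count so that the factorial Poisson weights dominate the geometric factor $C^{n}$ in the series for the compound Poisson factor. Your write-up is, if anything, slightly more explicit than the paper's (the pointwise inequality $bt\le\epsilon t^2+b^2/(4\epsilon)$ and the exact powers of $C$), but the ideas coincide step for step.
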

\begin{proof}
Let $u \sim \mu$, then following Theorem
  \ref{levy-khintchine-representation} and the decomposition \eqref{ID-decomposition-centered} above, we
  know that there exists
an element $\tilde{m} \in X$ and 
  independent random variables $w \sim \mcl{N}(0, R) $ and $v \sim \text{CPois}(\lambda(X),
  \frac{1}{\lambda(X)} \lambda)$ so that 
$$
\EE h( \|u\|_X ) = \EE h( \| \tilde{m} + w + v \|_X) \le C^3 h(\| \tilde{m}\|_X) (\EE
h(\| w\|_X) ) (\EE h(\| v\|_X))
$$
where the inequality follows because of the triangle inequality and
the fact that $h$ is non-decreasing and locally bounded. Now by \cite[Lem.~25.5]{sato}
we know that there exist constants $a,b > 0$ such that $h(x) \le b
\exp( a|x|)$. Using this bound with the assumption that $\| u\|_X
<\infty$ $\mu$-a.s. along with  Fernique's theorem \cite[Thm.~2.8.5]{bogachev-gaussian} for Gaussian
measures on Banach spaces implies that $\EE h(\| w\|_X) <
\infty$. Now suppose that $\frac{1}{\lambda(X)} \int_X h(\| u\|_X) \dd
\lambda(u) = U < \infty$. Then using the law of total expectation
\cite[Thm.~34.4]{billingsley}, the fact that $h$ is
submultiplicative and $v$ is a compound Poisson random variable we get
$$
\begin{aligned}
\EE h(\|v\|_X)  &= \EE \left( \left. \EE h \left( \left \| \sum_{k=0}^N v_k \right\|_X
\right) \right| N \right) 
\le \EE \left( \left. \EE h \left( \sum_{k=0}^N\left\| v_k \right\|_X
    \right)\right| N \right) \\
&\le \EE \left( \left. C^N\EE  \left( \prod_{k=0}^Nh\left\| v_k \right\|_X
    \right)\right| N \right) 
=\EE \left( \left. C^N \left( \prod_{k=0}^N \EE h\left\| v_k \right\|_X
    \right)\right| N \right)  \\ & = \EE ( (U C)^N | N) 
= \sum_{k=0}^\infty
  \frac{e^{-\lambda(X)} ( UC \lambda(X) )^k}{k!} < \infty.
\end{aligned}
$$
 \end{proof}

By putting Theorems \ref{g-moment-of-ID-measure} and
\ref{submultiplicative-functions} together we 
immediately obtain the following corollary concerning the moments of ID measures.
\begin{corollary}\label{moments-of-ID-measure}
Suppose that $X$ is a Banach space and $\mu = \text{ID}(m, \mcl{R},
\lambda)$. If
$\lambda$ is a L{\'e}vy measure on $X$ so that $0<\lambda(X) <\infty$,
$\mu(X) = 1$ and $\| \cdot \|_X < \infty $ $\mu$-a.s.\ then $\|\cdot \|_X \in
  L^p(X, \mu)$ whenever $\| \cdot\|_X \in L^p(X, \lambda)$ for $p \in [1, \infty)$.
\end{corollary}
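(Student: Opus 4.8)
The plan is to follow the text's own hint and combine Theorems \ref{g-moment-of-ID-measure} and \ref{submultiplicative-functions}. The one point that requires care is that the function $t \mapsto |t|^p$ is itself \emph{not} submultiplicative near the origin: the product $|t|^p|s|^p$ collapses to zero as either argument tends to $0$, whereas $|t+s|^p$ need not, so no constant $C$ can make $|t+s|^p \le C|t|^p|s|^p$ hold globally. Hence Theorem \ref{g-moment-of-ID-measure} cannot be applied to $|t|^p$ directly, and the fix is to pass to a submultiplicative majorant.

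Concretely, I would set $h(t) := (\max\{1, |t|\})^p$. By Theorem \ref{submultiplicative-functions}(iii) the function $\max\{1, |t|\}$ is submultiplicative, and by part (ii) of the same theorem (with $a=1$, $b=0$, $\alpha = p$) its $p$-th power $h$ is submultiplicative as well; moreover $h$ is clearly non-negative, non-decreasing and locally bounded, so it meets all the hypotheses of Theorem \ref{g-moment-of-ID-measure}. I would then record the elementary two-sided bound
\[
|t|^p \;\le\; h(t) \;=\; \max\{1, |t|^p\} \;\le\; 1 + |t|^p, \qquad t \in \reals .
\]
Applying the right-hand inequality with the standing assumptions $\lambda(X) < \infty$ and $\|\cdot\|_X \in L^p(X, \lambda)$ gives
\[
\int_X h(\|u\|_X)\, \dd\lambda(u) \;\le\; \lambda(X) + \int_X \|u\|_X^p\, \dd\lambda(u) \;<\; \infty ,
\]
so that $h(\|\cdot\|_X) \in L^1(X, \lambda)$.

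Finally, Theorem \ref{g-moment-of-ID-measure} applies to this particular $h$ and yields $h(\|\cdot\|_X) \in L^1(X, \mu)$; the left-hand inequality in the display above then gives $\int_X \|u\|_X^p\, \dd\mu(u) \le \int_X h(\|u\|_X)\, \dd\mu(u) < \infty$, which is exactly the assertion $\|\cdot\|_X \in L^p(X, \mu)$. I do not expect any serious obstacle: the statement is a direct corollary once the two theorems are in hand, and the only thing one must not overlook is precisely the replacement of $|t|^p$ by its submultiplicative envelope $(\max\{1,|t|\})^p$, since that is what renders Theorem \ref{g-moment-of-ID-measure} applicable while the sandwiching bounds ensure no moment information is lost in the process.
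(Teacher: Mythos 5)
Your proof is correct and is precisely the argument the paper intends when it says the corollary follows "by putting Theorems \ref{g-moment-of-ID-measure} and \ref{submultiplicative-functions} together" (the paper leaves the details implicit). Your observation that one must pass from $|t|^p$ to the submultiplicative envelope $(\max\{1,|t|\})^p$, and that the two are comparable up to the additive constant absorbed by $\lambda(X)<\infty$, is exactly the point that makes the deduction go through.
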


Another interesting case is when the L{\'e}vy measure $\lambda$ is
convex. Recall that a Radon probability measure $\nu$ on $X$ is said
to be convex whenever it satisfies
$$
\nu( \beta A + (1 - \beta) B) \ge \nu(A)^{\beta} \nu(B)^{1- \beta} 
$$
for $\beta \in [0,1]$ and all Borel sets $A$ and $B$ (see
\cite{hosseini-convex-prior, borell-convex} for more details about
convex measures). We are interested in convex measures since they
have exponential tails under mild assumptions
\cite[Thm.~3.6]{hosseini-convex-prior}. More precisely, if
$\nu$ is a convex probability measure on $X$  and $\| \cdot \|_X< \infty$ $\nu$-a.s. then
there exists a constant $0<b < \infty $ so that $\exp( b \|
\cdot  \|_X) \in L^1(X, \nu)$. \myhl{Since the exponential is a submultiplicative function, we immediately 
obtain the following corollary.}
\begin{corollary}\label{id-measures-with-convex-levy-measures}
Suppose that $X$ is a Banach space and $\mu = \text{ID}(m, \mcl{R},
\nu)$. If
$\nu$ is a convex probability measure on $X$, $\mu(X) = 1$ and $\|
\cdot \|_X < \infty $ a.s. under both $\nu$ and $\mu$ then 
there exists a constant $b >0$ so that $\exp( b \| \cdot \|_X )
\in L^1(X, \mu)$.
\end{corollary}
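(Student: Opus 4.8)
The plan is to recognize this statement as a direct specialization of Theorem \ref{g-moment-of-ID-measure}, applied to the submultiplicative weight $h(t) = \exp(b|t|)$, where $b>0$ is the exponential integrability constant furnished by the convexity of $\nu$. First I would invoke the tail estimate for convex measures, \cite[Thm.~3.6]{hosseini-convex-prior}: since $\nu$ is a convex probability measure with $\|\cdot\|_X < \infty$ $\nu$-a.s., there exists $b \in (0, \infty)$ with $\int_X \exp(b\|u\|_X)\,\dd\nu(u) < \infty$, that is, $\exp(b\|\cdot\|_X) \in L^1(X, \nu)$. This is the hypothesis of Theorem \ref{g-moment-of-ID-measure} on the Lévy measure side, once we identify the correct weight.

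Next I would verify that the candidate weight $h(t) := \exp(b|t|)$ is submultiplicative in the sense of the definition preceding Theorem \ref{submultiplicative-functions}. This is immediate and does not even require part (iii) of that theorem, which only covers exponents $\beta \in (0,1)$ and stops short of the endpoint $\beta = 1$: the triangle inequality $|t+s| \le |t| + |s|$ gives $h(t+s) = \exp(b|t+s|) \le \exp(b|t|)\exp(b|s|) = h(t)h(s)$, so the submultiplicativity constant is $C = 1$, and $h$ is clearly non-negative and locally bounded. Because $\|\cdot\|_X \ge 0$, we have $h(\|\cdot\|_X) = \exp(b\|\cdot\|_X)$, so the previous step reads precisely $h(\|\cdot\|_X) \in L^1(X, \nu)$.

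It then remains to confirm that the hypotheses of Theorem \ref{g-moment-of-ID-measure} are in force with the Lévy measure taken to be $\nu$ itself. Here $\nu$ is the convex probability measure serving as the Lévy measure in $\mu = \text{ID}(m, \mcl{R}, \nu)$, so $0 < \nu(X) = 1 < \infty$; moreover $\mu(X) = 1$ and $\|\cdot\|_X < \infty$ $\mu$-a.s.\ by assumption. Applying Theorem \ref{g-moment-of-ID-measure} to the submultiplicative $h$ and the finite Lévy measure $\nu$ transfers the exponential integrability from $\nu$ to $\mu$, yielding $\exp(b\|\cdot\|_X) = h(\|\cdot\|_X) \in L^1(X, \mu)$, which is exactly the claim. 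There is no genuine obstacle here: the only point needing a moment of attention is the submultiplicativity of the pure exponential at the endpoint $\beta = 1$, handled directly above, while the remaining checks (that a convex probability measure qualifies as the finite Lévy measure in $\text{ID}(m,\mcl{R},\nu)$, and that $\|\cdot\|_X$ is finite almost surely under both $\nu$ and $\mu$) are already built into the corollary's hypotheses.
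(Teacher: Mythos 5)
Your proposal is correct and follows exactly the route the paper intends: the paper's entire justification is the sentence preceding the corollary, namely that convexity of $\nu$ gives $\exp(b\|\cdot\|_X)\in L^1(X,\nu)$ via \cite[Thm.~3.6]{hosseini-convex-prior} and that the exponential is submultiplicative, so Theorem~\ref{g-moment-of-ID-measure} applies. Your extra care in checking submultiplicativity of $\exp(b|t|)$ directly (since Theorem~\ref{submultiplicative-functions}(iii) only lists $\beta\in(0,1)$) is a sensible detail the paper glosses over, but it does not change the argument.
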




At the end of this section we ask whether we would obtain an ID measure if we
used a sequence of ID random variables in order to generate a
product prior. The answer to this question is affirmative and serves as
the proof of our claim that $G_{p,q}$-priors that were introduced
earlier belong to the class of ID probability measures.
\begin{theorem}\label{product-ID-prior-is-ID}
Let $X$ be a Banach space with an unconditional Schauder basis $\{ x_k
\}$ and let $\mu$ be the product prior that
is obtained from
$\{\gamma_k\} \in \ell^2$ and an i.i.d. sequence $\{ \xi_k\}$ of
real valued random variables. Suppose that $\xi_k \sim ID(0, \sigma^2,
\lambda)$
where $\sigma >0$ and $\lambda$ is a symmetric and finite L{\'e}vy measure on
$\reals$ such that $\max\{1,|\cdot|^2\} \in L^1( \reals, \lambda)$.  Then
$\mu$ is a Radon ID probability measure on $X$ with characteristic function
$$
\hat{\mu}(\varrho) = \exp \left[ - \frac{1}{2}\sum_{k=1}^\infty 
\sigma^2 \gamma_k^2  \varrho (x_k)^2 +  \sum_{k=1}^\infty\int_\reals
  (\cos(\gamma_k \varrho (x_k) t_k) - 1) \dd
  \lambda(t_k)     \right] \qquad \forall \varrho \in X^\ast.
$$
\end{theorem}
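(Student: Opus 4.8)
The plan is to compute the characteristic function $\hat\mu(\varrho)$ directly from the product structure and to read off both the stated formula and the infinite divisibility from it. Throughout I would write $u_N := \sum_{k=1}^N \gamma_k\xi_k x_k$ and $s_k(\varrho) := \gamma_k\varrho(x_k)$. First I would record the one-dimensional L\'evy--Khintchine representation of the common law of the $\xi_k$. Since $\xi_k \sim \mathrm{ID}(0,\sigma^2,\lambda)$ with $\lambda$ symmetric and finite, the odd part of the integrand vanishes and the truncation term $-i s t\,\mathbf{1}_{B}(t)$ integrates to zero, so $\EE\exp(is\xi_k) = \exp(\psi(s))$ with
\[
\psi(s) = -\tfrac12\sigma^2 s^2 + \int_\reals(\cos(st)-1)\,\dd\lambda(t),
\]
the integral being finite because $|\cos(st)-1|\le 2$ and $\lambda$ is finite. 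I would also note that $\VV\xi_k = \sigma^2 + \int_\reals t^2\,\dd\lambda(t) < \infty$ by the hypothesis $\max\{1,|\cdot|^2\}\in L^1(\reals,\lambda)$; this is exactly what Theorem~\ref{product-prior-ellp} needs (together with $\{\gamma_k\}\in\ell^2$) to guarantee $\|u\|_X<\infty$ a.s., and Theorem~\ref{product-prior-is-radon} then shows $\mu$ is Radon, since every Borel probability measure on $\reals$ is Radon.

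Next I would treat the finite truncations. By independence of the $\xi_k$,
\[
\EE\exp(i\varrho(u_N)) = \prod_{k=1}^N \EE\exp(i\, s_k(\varrho)\xi_k) = \exp\Big(\sum_{k=1}^N\psi(s_k(\varrho))\Big),
\]
which already has the desired shape. To pass to the limit I would argue on two fronts. On the probabilistic side, a.s.\ convergence in $X$ of the series defining $u$ gives $\varrho(u_N)\to\varrho(u)$ a.s.\ by continuity of $\varrho$, and bounded convergence ($|\exp(i\,\cdot)|=1$) yields $\EE\exp(i\varrho(u_N))\to\hat\mu(\varrho)$. On the analytic side I must show that $\sum_k\psi(s_k(\varrho))$ converges. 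The crucial input is that a.s.\ convergence of the real series $\varrho(u)=\sum_k s_k(\varrho)\xi_k$ of independent, symmetric, mean-zero, square-integrable summands forces $\sum_k s_k(\varrho)^2\,\VV\xi_k<\infty$, whence $\sum_k s_k(\varrho)^2<\infty$ because $\VV\xi_k$ is a fixed positive constant. Combined with the elementary bound $|\cos(s t)-1|\le\tfrac12 s^2 t^2$, which gives $\big|\int_\reals(\cos(s_k t)-1)\,\dd\lambda(t)\big|\le\tfrac12 s_k(\varrho)^2\int_\reals t^2\,\dd\lambda(t)$, this shows $\sum_k|\psi(s_k(\varrho))|<\infty$. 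The two limits must then agree, producing the stated formula.

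Finally I would establish infinite divisibility. For each $n\in\integers$ let $\mu_{1/n}$ be the product prior built from the same $\{\gamma_k\}$ and an i.i.d.\ sequence $\xi_k^{(n)}\sim\mathrm{ID}(0,\sigma^2/n,\tfrac1n\lambda)$; the scaled L\'evy measure $\tfrac1n\lambda$ is again symmetric and finite and satisfies the same moment hypothesis, so $\mu_{1/n}$ is a well-defined Radon probability measure on $X$ by the first part of the argument, with $\hat\mu_{1/n}(\varrho)=\hat\mu(\varrho)^{1/n}$. Hence $\hat\mu(\varrho)=\big(\hat\mu_{1/n}(\varrho)\big)^n$ for every $\varrho\in X^\ast$, which is precisely the characteristic-function form of Definition~\ref{definition-ID-measures}, so $\mu$ is infinitely divisible.

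I expect the main obstacle to be the analytic convergence of $\sum_k\psi(s_k(\varrho))$: everything hinges on deducing $\sum_k s_k(\varrho)^2<\infty$, and the cleanest route is to extract this from the a.s.\ convergence of $\varrho(u)$ (together with the finiteness of $\VV\xi_k$) rather than attempting to bound $\sup_k|\varrho(x_k)|$, which need not be finite for a general, non-normalized unconditional Schauder basis.
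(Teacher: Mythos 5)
Your proposal is correct, and it reaches the same intermediate formula $\hat{\mu}(\varrho)=\exp\bigl(\sum_k\psi(\gamma_k\varrho(x_k))\bigr)$ through the same product/truncation computation, but it diverges from the paper's proof at the two points that actually require an argument. First, for convergence of the exponent the paper simply uses that the basis is normalized, so $|\varrho(x_k)|\le\|\varrho\|_{X^\ast}$ and $\sum_k\gamma_k^2\varrho(x_k)^2\le\|\varrho\|_{X^\ast}^2\sum_k\gamma_k^2<\infty$; you instead extract $\sum_k s_k(\varrho)^2<\infty$ from the a.s.\ convergence of $\varrho(u)=\sum_k s_k(\varrho)\xi_k$. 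Your route is more general (it does not presuppose normalization), but be careful with the justification: for general independent, symmetric, mean-zero, square-integrable summands, a.s.\ convergence does \emph{not} force summable variances (take $Y_k=\pm k$ with probability $k^{-3}$ each and $0$ otherwise). Here the conclusion is nevertheless valid because the $\xi_k$ are i.i.d.\ with $\sigma>0$, so $\PP(|\xi_1|\ge 1)>0$ and the symmetric three-series criterion $\sum_k\EE\min\{s_k^2\xi_k^2,1\}<\infty$ yields $\sum_k s_k(\varrho)^2<\infty$; you should route the argument through that criterion rather than through a converse to Kolmogorov's variance test. Second, for infinite divisibility the paper shows each truncation $\mu_N$ is ID and invokes the closedness of the ID class under weak limits (\cite[Thm.~5.6.2]{linde}), whereas you exhibit the $n$-th convolution roots explicitly as product priors built from $\mathrm{ID}(0,\sigma^2/n,\lambda/n)$ and verify $\hat{\mu}=(\hat{\mu}_{1/n})^n$. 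Your version is more self-contained and constructive (no weak-convergence lemma needed), at the cost of having to re-run the Radon and a.s.-finiteness checks for each $\mu_{1/n}$ so that Definition~\ref{definition-ID-measures} applies; the paper's version gets ID ``for free'' from the finite-dimensional truncations but leans on an external limit theorem. Both are sound.
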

\begin{proof}
  Since $\max \{1,|\cdot|^2\} \in L^1(\reals, \lambda)$, the L{\'e}vy
  measure of the $\xi_k$ has bounded second moment and so by Corollary
  \ref{moments-of-ID-measure} we see that 
 $\VV \xi_k < \infty$. Now it follows from 
 Theorem~\ref{product-prior-ellp} that $\| \cdot \|_X<\infty$ $\mu$-a.s.
since $\{ \gamma_k \} \in \ell^2$. The fact that  $\mu$ is Radon
follows 
from 
Theorem \ref{product-prior-is-radon}.
Now we consider the characteristic function of $\mu$.  \myhl{Using the definition of 
 the
characteristic function of $\mu$  and the fact that 
$
\hat{\xi}_k(z) =  \exp\left( -\frac{1}{2}\sigma^2 z^2 + \int_\reals \cos( tz - 1) \dd \lambda(z)\right)
$, we can write
$$
\begin{aligned}
  \hat{\mu}(\varrho) &
= \prod_{k=1}^\infty \EE \exp \left( i \gamma_k \xi_k \varrho (x_k) \right)
=\exp \left[ - \frac{1}{2}\sum_{k=1}^\infty 
\sigma^2 \gamma_k^2  \varrho (x_k)^2 +  \sum_{k=1}^\infty\int_\reals
  (\cos(\gamma_k \varrho (x_k) t_k) - 1) \dd
  \lambda(t_k)     \right]. 
\end{aligned}
$$}
Now consider the sequence of measures $\{\mu_N\}_{N=1}^\infty$ that are defined via
$$
\hat{\mu}_N(\varrho) = \exp \left[ - \frac{1}{2}\sum_{k=1}^N 
\sigma^2 \gamma_k^2  \varrho (x_k)^2 +  \sum_{k=1}^N\int_\reals
  (\cos(\gamma_k \varrho (x_k) t_k) - 1) \dd
  \lambda(t_k)     \right].
$$
Each $\mu_N$ is ID given the fact that a finite sum of ID random
variables is ID. Since the $\{x_k\}$ are normalized and $\{
\gamma_k\}\in\ell^2$  then $\sum_{k=1}^\infty \gamma_k^2 \varrho (x_k)^2 <\infty$. Furthermore, using the inequality
$|\cos(t) - 1 |\le t^2$ we can write 
$$
 \sum_{k=1}^\infty\int_\reals
  (\cos(\gamma_k\varrho(x_k) t_k) - 1) \dd
  \lambda(t_k) \le  \sum_{k=1}^\infty\int_\reals
  \gamma_k^2\varrho(x_k)^2 t_k^2 \dd
  \lambda(t_k) = \sum_{k=1}^\infty
  \gamma_k^2\varrho(x_k)^2\int_\reals t_k^2 \dd
  \lambda(t_k)  
 $$
But this term is also bounded since $\{ \gamma_k \} \in \ell^2$
,
$\{x_k\}$ are normalized and $\max\{ 1, |x|^2\} \in L^1(\reals,
\lambda)$.
Then, $\hat{\mu}_N(\ell) \mapsto
\hat{\mu}(\ell)$ for all $\ell \in X^\ast$ and so the sequence $\mu_N$
converges weakly to $\mu$. Therefore, $\mu$ is also ID by \cite[Thm.~5.6.2]{linde}.
 Observe that the L{\'e}vy measure of $\mu$ is
concentrated along the coordinate axes of the basis $\{ x_k\}$.
\end{proof}

\section{Stochastic process priors on BV}
\label{sec:stochastic-process-prior}
\myhl{
Total variation regularization is a classic technique for recovery of 
blocky images in the variational approach to inverse problems \cite[Ch.~8]{vogel}. As we mentioned earlier, it was shown in \cite{lassas-can-we-use-tv} 
that the TV-prior is not discretization invariant and converges to a Gaussian measure in the limit of fine discretizations.
In this section we consider prior measures that are defined as laws of stochastic processes with jump discontinuities in order to 
model discontinuous functions with bounded variation. The resulting prior measures are defined directly on the function space $BV$ and so our definition can get around the inconsistency that was observed in \cite{lassas-can-we-use-tv}. We emphasize that our construction of a $BV$-prior does not 
disprove the result of \cite{lassas-can-we-use-tv} but provides a well defined 
alternative to the classic TV-prior. We also note that 
our approach is not the only way to construct a well defined TV-prior. For example, \cite{yao-tv-gaussian} presents a TV-prior that is 
absolutely continuous with respect to an underlying Gaussian measure 
and results in a well-posed inverse problem. 
 
Following \cite[Ch.~13]{leoni} we define the space $BV(\Omega)$ of functions of bounded variation
on an open set $\Omega \subset \reals^n$  as the space of 
functions $u\in L^1(\Omega)$ whose first order partial derivatives are finite signed Radon measures i.e. 
for $j= 1, 2, \cdots, n$ there exist finite signed measures $\mcl{M}_j: \mcl{B}(\Omega) \mapsto \reals$ so that 
$$
\int_\Omega u(t) \frac{\partial \phi}{\partial t_j}(t) \dd \Lambda(t) =    - \int_\Omega \phi(t) \dd \mcl{M}_j(t), \qquad \forall \phi \in C^\infty_c(\Omega).
$$
We define the {\it variation} of $u$ as 
$$
V(u) := \sup \left\{ \sum_{k=1}^n \int_\Omega \phi_k(t) \dd \mcl{M}_k(t) ~ \bigg|~ \phi \in C^\infty_c(\Omega; \reals^n), |\phi(t)| \le 1, \forall t \in \Omega  \right\},
$$
where $C^\infty_c(\Omega;\reals^n)$ denotes the space of vector valued smooth functions with compact support in $\Omega$. The space $BV(\Omega)$
is a Banach space when equipped with the norm 
$$
\| u\|_{BV(\Omega)} := \| u\|_{L^1(\Omega)} + V(u)
$$
but it
is not separable \cite[Prop.~2.3]{buttazzo}. There is a correspondence between the space $BV([0,1])$ and  
the space of functions with finite total variation in one dimensions. Recall that the {\it total variation} of a function $u: [0,1] \mapsto \reals$ is defined as 
$$
TV(u) := \sup \left\{ \sum_{k=1}^K |u(t_k) - u(t_{k-1}) |  \right\}
$$
where the supremum is taken over all finite partitions $0 =t_0 < t_1 < t_2 < \cdots < t_{K} =1$ of the interval $[0,1]$ with $K \in \integers$. It is known 
that if $TV(u) < \infty$ then $V(u) \le TV(u)$ and every $u\in BV([0,1])$ has a right continuous representative with bounded total variation 
\cite[Thm.~7.2]{leoni}. We prefer to work with the space $BV$ and its corresponding norm rather than the total variation functional 
since the former is readily defined in higher dimensions.
We start by  constructing a prior measure on $BV([0,1])$
as the law of a pure Jump L{\'e}vy process. We shall generalize our construction to  
$BV(\Omega)$ 
 later in Section~\ref{sec:bv-prior-high-dim}. 

In order to define our prior measure we will use some well-known results from the theory of L{\'e}vy processes (see \cite{tankov} for an extensive introduction).
 Using
the L{\'e}vy-Khintchine formula for L{\'e}vy processes \cite[Thm.~3.1]{tankov} we identify a L{\'e}vy process $u(t)$ via its characteristic function 
$$
\EE \exp( i s u(t) ) = \exp(t \psi(s))  \qquad \text{for} \qquad s \in \reals,
$$
where 
$$
\psi(s) =  i m s  - \frac{1}{2} (\sigma s)^2 + \int_\reals \exp( i \xi s) - 1 - i s \xi \mb{1}_{\{|\xi| \le 1\}}(\xi)  \dd \lambda(\xi).
$$
Here, the constants $m\in \reals$ and $\sigma \ge 0$ are fixed and $\lambda$ is a L{\'e}vy measure on $\reals$ (see Definition~\ref{levy-measure-definition}). Similar to the case of ID measures, the {\it characteristic triplet} $(m, \sigma, \lambda)$ uniquely identifies the 
stochastic process $u(t)$.  Certain pathwise properties of $u(t)$ can be inferred from its characteristic 
triplet. 
\begin{theorem}[{\cite[Prop.~3.9]{tankov}}]\label{pure-jump-levy-process}
 \myhl{ Let $u(t)$ be a L{\'e}vy process with characteristic triplet $(m, \sigma, \lambda)$. Then $u(t) \in BV([0,1])$
a.s. and $\EE \| u \|_{BV([0,1])} < \infty$ if 
\begin{equation}\label{bounded-variation-condition}
\sigma = 0 \quad \text{and} \quad  \int_{\{ |\xi| \le 1\}} |\xi| \dd \lambda(\xi) < \infty. 
\end{equation}
Such a process is of the pure jump type. 
If in addition $\lambda(\reals) < \infty$ then $u(t)$ is a compound Poisson process with piecewise constant 
sample paths.} 
\end{theorem}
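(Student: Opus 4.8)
The plan is to lift the characteristic-function description of $u(t)$ given above to a pathwise statement via the L\'evy--It\^o decomposition (see \cite{tankov}). I would first recall that a L\'evy process with triplet $(m,\sigma,\lambda)$ splits almost surely into a linear drift, an independent Brownian part scaled by $\sigma$, a finite sum of ``large'' jumps (those exceeding $1$ in magnitude, of which there are only finitely many on $[0,1]$ because $\lambda(\{|\xi|>1\})<\infty$), and a compensated sum of ``small'' jumps. The hypothesis $\sigma=0$ eliminates the Brownian part. The essential role of the second hypothesis $\int_{\{|\xi|\le1\}}|\xi|\,\dd\lambda(\xi)<\infty$ is that it renders the compensation of the small jumps unnecessary: since the small jumps are governed by a Poisson random measure whose intensity integrates $|\xi|$, the compensation (Campbell) formula gives $\EE\sum_{0<s\le1,\,|\Delta u(s)|\le1}|\Delta u(s)|=\int_{\{|\xi|\le1\}}|\xi|\,\dd\lambda(\xi)<\infty$, so that $\sum_{0<s\le1}|\Delta u(s)|<\infty$ almost surely. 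This yields the representation $u(t)=bt+\sum_{0<s\le t}\Delta u(s)$ with drift $b=m-\int_{\{|\xi|\le1\}}\xi\,\dd\lambda(\xi)$ and an absolutely convergent jump sum, establishing that $u$ is of pure jump type.

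Granted this representation, I would read off the pathwise $BV$ bound directly. The sample path is \cadlag{} and equals the Lipschitz drift $t\mapsto bt$ plus its pure-jump part, so its total variation over $[0,1]$ obeys $TV(u)\le|b|+\sum_{0<s\le1}|\Delta u(s)|$, which is finite almost surely by the previous step. Invoking the facts recalled just before the theorem---that $TV(u)<\infty$ implies $V(u)\le TV(u)$, and that a \cadlag{} path is bounded on the compact interval $[0,1]$ and hence lies in $L^1([0,1])$---I conclude $\|u\|_{BV([0,1])}=\|u\|_{L^1([0,1])}+V(u)<\infty$ almost surely.

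For the moment bound I would take expectations in the same inequality, using the compensation formula once more to get $\EE\sum_{0<s\le1}|\Delta u(s)|=\int_\reals|\xi|\,\dd\lambda(\xi)$ and $\EE\|u\|_{L^1([0,1])}\le\int_0^1\EE|u(t)|\,\dd t$, both controlled by the same integral. The step I expect to demand the most care is precisely this one: finiteness of $\EE\|u\|_{BV}$ forces the large jumps to have a finite first moment, $\int_{\{|\xi|>1\}}|\xi|\,\dd\lambda(\xi)<\infty$, which the L\'evy-measure axioms alone do not supply (they give only $\lambda(\{|\xi|>1\})<\infty$). The almost-sure $BV$ membership survives without this, but the expectation does not, so I would either strengthen the integrability hypothesis to $\int_\reals|\xi|\,\dd\lambda<\infty$ or import the first-moment bound from the concrete setting in which the theorem is applied.

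Finally, for the last assertion I would specialize to $\lambda(\reals)<\infty$, where the small-jump condition holds automatically. Then the total number of jumps on $[0,1]$ is Poisson with mean $\lambda(\reals)$, so almost surely there are finitely many jumps and the jump part is genuinely a compound Poisson sum with rate $\lambda(\reals)$ and jump law $\lambda/\lambda(\reals)$, i.e.\ a compound Poisson process in the sense of Definition~\ref{def-compound-poisson}. Between consecutive jump times the path then moves only through the drift $bt$, so it is piecewise constant precisely when $b=0$, that is when $m=\int_{\{|\xi|\le1\}}\xi\,\dd\lambda(\xi)$; under that normalization $u(t)$ is the piecewise-constant compound Poisson process asserted in the statement.
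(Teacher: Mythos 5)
The paper gives no proof of this theorem: it is imported verbatim from \cite[Prop.~3.9]{tankov}, so there is no internal argument to measure yours against, and your reconstruction via the L\'evy--It\^o decomposition is precisely the standard route taken in that reference. The core of your argument is correct: $\sigma=0$ removes the Gaussian component, the integrability of $|\xi|$ near the origin makes the small-jump sum absolutely convergent so that no compensation is needed, and the representation $u(t)=bt+\sum_{0<s\le t}\Delta u(s)$ with $b=m-\int_{\{|\xi|\le1\}}\xi\,\dd\lambda(\xi)$ yields the pathwise bound $TV(u)\le |b|+\sum_{0<s\le1}|\Delta u(s)|<\infty$ a.s., hence $u\in BV([0,1])$ a.s. Two of your side remarks are worth stressing because they identify genuine imprecisions in the statement as transcribed. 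First, the claim $\EE\|u\|_{BV([0,1])}<\infty$ does not follow from \eqref{bounded-variation-condition} alone; it additionally requires $\int_{\{|\xi|>1\}}|\xi|\,\dd\lambda(\xi)<\infty$, and the paper in fact silently supplies exactly this hypothesis (the condition $\int_\reals s\,\dd\lambda(s)<\infty$, added ``to ensure that $\lambda$ can be normalized to define a probability measure with bounded expectation'') when the theorem is applied in Section~\ref{sec:stochastic-process-prior}. Second, piecewise constancy of the sample paths in the compound Poisson case presupposes that the drift $b$ vanishes, i.e.\ $m=\int_{\{|\xi|\le1\}}\xi\,\dd\lambda(\xi)$; this holds in the paper's application, where the triplet is $(0,0,\lambda)$ with $\lambda$ symmetric, but not for arbitrary $m$. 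With those two caveats made explicit, your proof is complete and matches the intended argument.
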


 Thus, the law of a L{\'e}vy process $u(t)$ that satisfies \eqref{bounded-variation-condition}
coincides with a probability measure that is supported on $BV([0,1])$. Let us denote this measure by $\mu$. We wish  to use 
this measure as a prior within the Bayesian framework and achieve a well-posed inverse problem. An 
important question at this point is whether $\mu$ is a Radon measure on $BV([0,1])$ since the Radon 
property can often simplify the well-posedness analysis. We will now show that in the compound 
Poisson case, i.e. when $\lambda(\reals) <\infty$, the measure $\mu$ is tight and hence Radon \cite[Lem.~12.6]{aliprantis}. To our knowledge this result does not hold for general choices of the L{\'e}vy 
measure $\lambda$.

Recall Helly's selection principle \cite[Thm.~12]{hanche} stating that 
a set $A \subset BV([0,1])$ is relatively compact if there exists a constant 
$M > 0$ so that 
$$
\| w\|_{L^\infty([0,1])} < M, \qquad TV(w) < M, \qquad \forall w \in A. 
$$
Thus, 
 to show that ${\mu}$ is a tight measure on $BV([0,1])$ we need to argue 
that for every $\epsilon > 0$ there is an $M > 0$ so that 
$$
{\mu}( \{ w \in BV([0,1]) : \| w\|_{L^\infty([0,1])} \ge M, \:
TV(w) \ge M \} ) < \epsilon. 
$$
Now suppose that $u(t)$ is a compound Poisson process with characteristic triplet $(0, 0, \lambda)$ such that 
$c = \lambda(\reals) < \infty$ and $\int_\reals s \dd \lambda(s) < \infty$ (We added the last condition to ensure that $\lambda$ can be normalized to define a probability measure with 
bounded expectation).
 Then, we can write
\begin{equation}\label{compound-Poisson-process-generic}
u(0) = 0, \quad u(t) = \sum_{k=1}^{\tau(t)} \xi_k \quad \text{for} \quad t \in (0,1].
\end{equation}
Here $\tau(t)$ is a Poisson process with  intensity $c$ i.e. 
$$
\PP (\tau(t) = k) = \frac{(ct)^k}{k!} \exp( - ct)
$$
and $\{ \xi_k\}$ is an i.i.d.\ sequence of random variables distributed according 
to the measure $c^{-1}\lambda$. A few draws from such a process are given in Figure~\ref{fig:stochastic-process-prior-sample}(a)  when $\lambda$ is a standard Gaussian. 
Using this representation of $u(t)$ and the law of total expectation \cite[Thm.~34.4]{billingsley} we can write 
\begin{equation}\label{uniform-norm-poisson-process}
\EE \| u\|_{L^\infty([0,1])} =  \EE \sup_{t\in [0,1]} \left| \sum_{k=1}^{\tau(t)} \xi_k \right| \le \EE\left( \EE \sum_{k=1}^{N} |\xi_k| \bigg| \tau(1) = N \right) = c \EE |\xi_1| < \infty. 
\end{equation}
Furthermore, since the total variation of a piecewise constant function is simply the 
sum of the  the jump sizes we have 
\begin{equation}\label{total-variation-poisson-process}
\EE TV(u) = \EE \sum_{k=1}^{\tau(1)} |\xi_k | = c \EE |\xi_1| < \infty. 
\end{equation}
Now it follows from Markov's inequality that for any
 $M >0$
$$
\PP( \| u\|_{L^\infty([0,1])} \ge M) \le \frac{\EE \| u\|_{L^\infty([0,1])}}{M},\qquad 
\PP( TV(u) \ge M) \le \frac{ \EE TV(u)}{M}.
$$  
A straightforward argument yields that for any choice of $\epsilon >0$ we can choose 
$M > 0$ large enough 
so that $\PP (\| u\|_{L^\infty([0,1])} > M , TV(u) > M ) \le \epsilon$ and so 
the measure $\mu$, the law of $u(t)$, is tight (Radon) on $BV([0,1])$.}

\subsection{Combination with Gaussian processes}
\myhl{The compound Poisson process is a convenient model for functions with jump discontinuities. However, the fact that its sample paths  are piecewise constant can be too restrictive. In order to achieve a more flexible prior,
 that can model piecewise continuous functions, we combine our compound Poisson processes 
with a Gaussian process. If the sample paths of the Gaussian process 
 are sufficiently regular
 then the resulting prior measure will still be concentrated on $BV([0,1])$.  
The theory of Gaussian processes is well 
developed and a detailed introduction can be found in the monograph \cite{rasmussen}. 
Here we recall some basic results  and only consider the case of a Gaussian process with $C^\infty$ sample paths. Our approach can easily be 
generalized to less regular Gaussian processes by choosing a different kernel \cite[Sec.~4.2]{rasmussen}.

 Let $g(t)$ be a random function on $[0,1]$ so that
for any finite collection of points $\{ t_{k} \}_{k=1}^n$ the random variables $\{ g(t_k) \}_{k=1}^n$ are jointly Gaussian. Furthermore, suppose that 
$$
(g(t_1), \cdots, g(t_n))^T \sim \mcl{N}( 0, \mb{K}) \qquad  \text{where} \qquad \mb{K}_{k,j} = \kappa(t_k, t_j).
$$
Here  $\kappa(r,s) := \exp( -b|r-s|^2)$ is the {\it covariance kernel} of $g(t)$
and $b > 0$ is a fixed constant. Under these assumptions $g(t)$ is a mean zero Gaussian process and its samples are almost surely in
$C^\infty([0,1])$ (see \cite[Sec.~2.5.4]{paciorek-dissertation}). By definition, the Law of $g(t)$ is a Gaussian measure and since the kernel 
$\kappa$ is positive definite and continuous it follows from the Karhunen-Lo\'{e}ve
theorem (see \cite[Sec.~2.3]{ghanem}  that the law of $g(t)$ is supported on a 
Hilbert space and so it is a Radon measure.   

Now let us consider a compound Poisson process $u(t)$ as in 
\eqref{compound-Poisson-process-generic} in addition to the Gaussian process $g(t)$. Then 
the new process 
\begin{equation}\label{piecewise-smooth-process}
v(t) = g(t) + u(t)
\end{equation}
will have sample paths that are piecewise $C^\infty$ with finitely many jumps. Furthermore, since 
the laws of $u(t)$ and $g(t)$ are both Radon then the law of $v(t)$ is also Radon. Examples of draws from the 
process $v(t)$ are given in Figure~\ref{fig:stochastic-process-prior-sample}(b)
}
\begin{figure}[htp]
  \centering
        \raisebox{.18\textwidth}{a)}
        \includegraphics[width=.4\textwidth]{./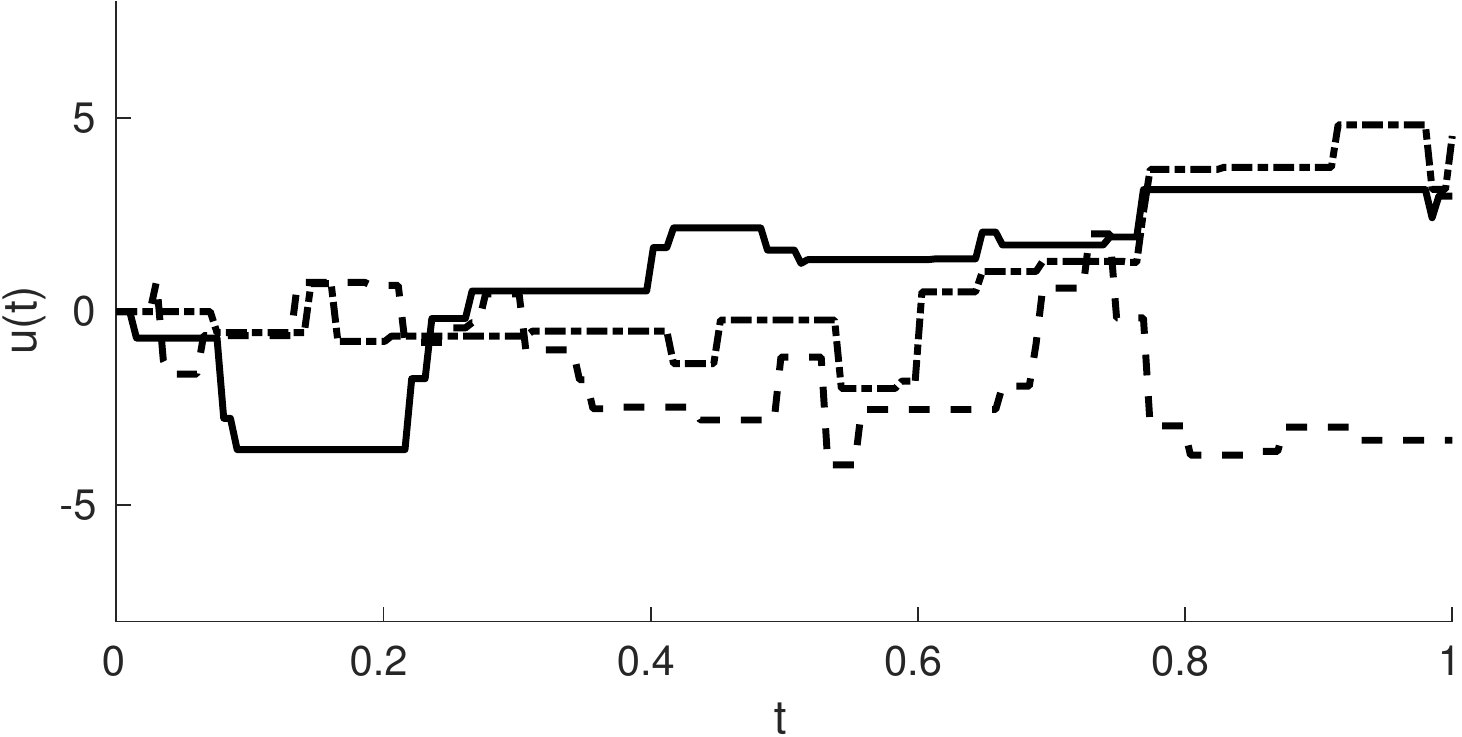} \qquad
        \raisebox{.18\textwidth}{b)}
        \includegraphics[width=.4\textwidth]{./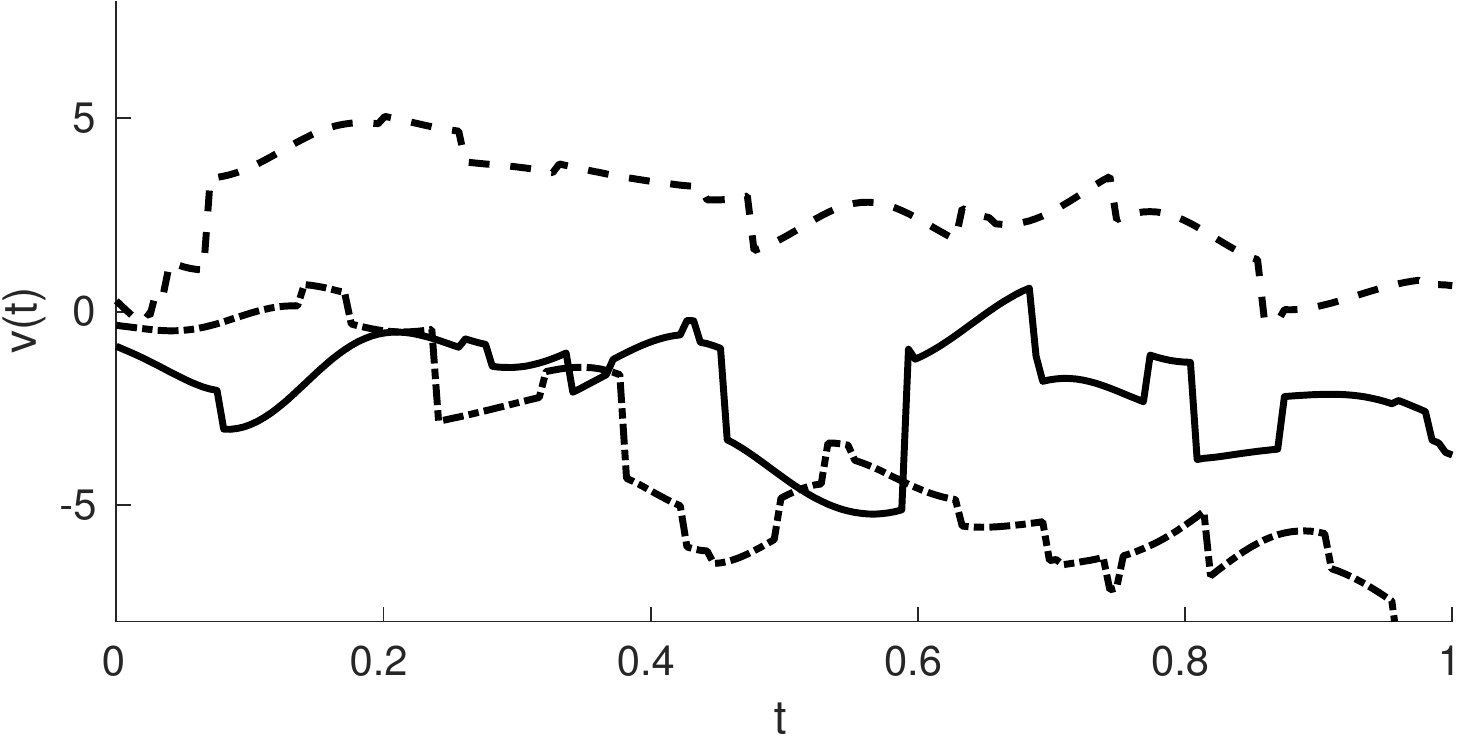} \\

        \raisebox{.3\textwidth}{c)}
        \includegraphics[width=.4\textwidth, clip=true, trim= 1cm 0cm 2cm 0 cm]{./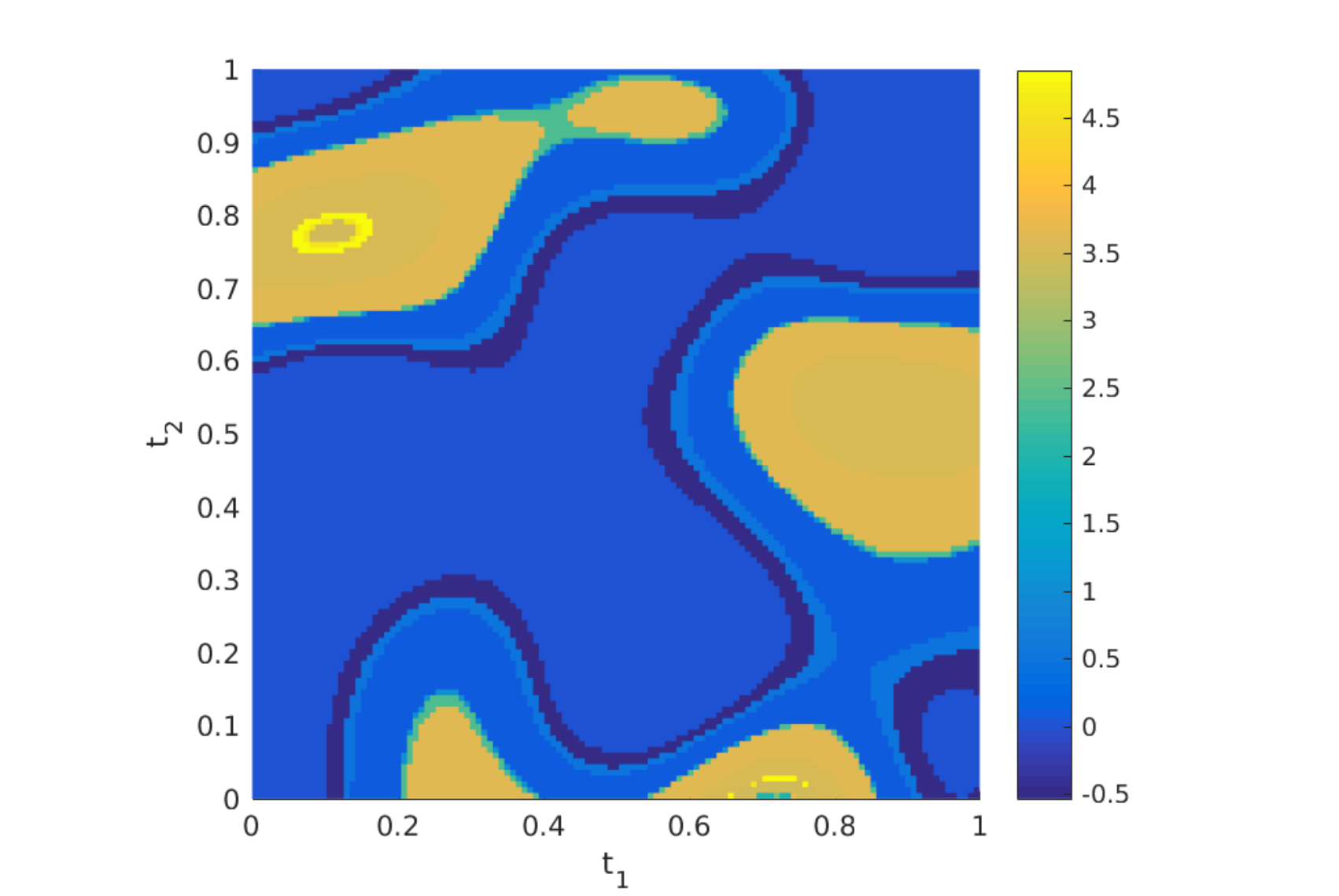} \qquad{ }
        \raisebox{.3\textwidth}{d)}
        \includegraphics[width=.38\textwidth, clip=true, trim= 1cm 0cm 2cm 0 cm]{./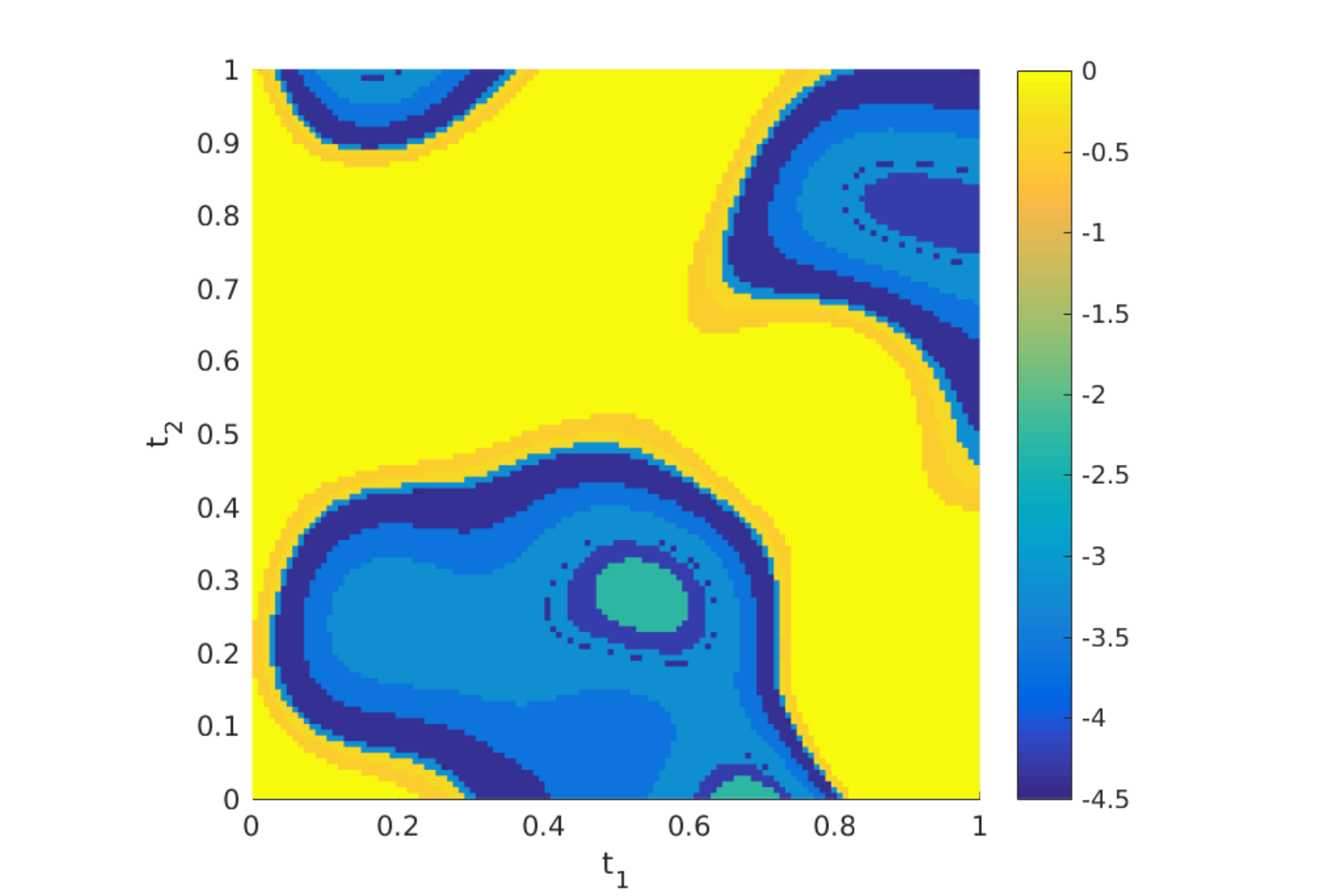}
  \caption{(a) Three samples from the compound Poisson process \eqref{compound-Poisson-process-generic} with normal jumps depicting the 
 piecewise constant sample paths. (b) Samples from the $v(t)$ process generated using 
\eqref{piecewise-smooth-process} by combining a smooth Gaussian process and an independent 
compound Poisson process with normal jumps. (c,d) Two draws from the random field $u(\mb{t})$
generated using \eqref{BV-process-in-2D-example} by combining a smooth Gaussian field with a 
Poisson process. }
  \label{fig:stochastic-process-prior-sample}
\end{figure}

\subsection{Extension to higher dimensions}\label{sec:bv-prior-high-dim}
\myhl{At the end of this section we discuss some possibilities for extension of the 
 compound Poisson process priors to random fields in $BV(\Omega)$ for compact domains $\Omega \subset \reals^n$, for 
$n=2, 3$, with Lipschitz boundary. Let $g$ be a Gaussian process on $\Omega$ with kernel 
$\kappa(r, s) = \exp( -|{r- s}|^2)$ i.e.
$$
(g({t}_1), \cdots, g({t}_n))^T \sim
\mcl{N}(0, \mb{K}), \quad \mb{K}_{k,j} = \kappa( {t}_k, {t}_j)
\quad \text{for any collection} \quad {t}_1, \cdots {t}_n \in \Omega.
$$    
Under these assumptions $g\in C^\infty(\Omega)$ a.s. Now consider the random field
\begin{equation}\label{BV-process-in-2D-example}
u({t}) = \sum_{k=1}^{\tau(g^+({t}))} \xi_k, \qquad  \text{for } t\in \Omega \qquad \text{where } g^+(t) := \max\{0, g(t)\}.
\end{equation}
As before, $\tau$ is an independent Poisson random variable with rate $c > 0$ and $\{\xi_k\}$ is a sequence of i.i.d. random variables distributed according to the probability measure $c^{-1} \lambda$. It is straightforward to check that samples from $u$ are piecewise 
constant functions on $\Omega$ that jump along a finite number of the positive level sets of the field $g$. These level sets are chosen by the poisson random variable $\tau$.
 Since we assumed that $g$ is 
in $C^\infty(\Omega)$ a.s. we expect that the level sets of $g$ are also smooth and so $u$ is a piecewise 
constant function that jumps along finitely many smooth curves (surfaces). We will now check whether the 
law of the field $u$ is indeed supported on $BV(\Omega)$.

In what follows we will occasionally suppress the dependence of different functions on $t$ to make the 
expressions more readable.
Consider a test function $\phi \in C^\infty_c(\Omega; \reals^n)$ so that $|\phi(t)| \le 1$.  We can write
$$
\int_{\Omega} u \: \text{div} \phi  \: \dd \Lambda(t) = \int_\Omega \left( \sum_{k=1}^{\tau(g^+)} \xi_k\right) \: \text{div} \phi \: \dd \Lambda(t) = \sum_{j=1}^{\tau( \max g^+ )} \int_{A_j} \left(\sum_{k=1}^j \xi_k \right) \: \text{div} \phi \: \dd \Lambda(t)  
$$
with the convention that the sum inside the integral is set to zero whenever $\tau(\max g^+) = 0$.
The sets $
A_j:=  \{ t \in \Omega \:\big| \: \tau(g^+(t)) = j\}
$
are the subsets of $\Omega$ on which $u$ is constant. Since $\tau$ is almost surely finite, we can 
integrate by parts \cite[Thm.~5.6]{evans-measure-theory} to get 
$$
\int_{\Omega} u \: \text{div} \phi  \: \dd \Lambda(t) = \sum_{j=1}^{\tau(\max g^+)} \int_{\partial A_j} \mcl{J}_j(u) \langle \phi, \vartheta_j \rangle \dd \Theta_j^{n-1}(s).    
$$
Here $\partial A_j$ is the boundary of $A_j$, $\vartheta_j$ is the unit outward normal  on $\partial A_j$, 
$\mcl{J}_j(u)$ is the jump of  $u$ across $\partial A_j$ going from $A_{j-1}$ to $A_j$ and $\Theta_j^{n-1}$ are the $(n-1)$-dimensional Hausdorff measures on $\partial A_j$ \cite[Ch.~2]{evans-measure-theory}. Recall that the 
$(n-1)$-dimensional Hausdorff measure of a simple curve in 2D (resp. surface in 3D) coincides with its length (resp. surface area) \cite[Sec.~3.4]{evans-measure-theory}. Since the size of the jumps of $u$ are a.s. finite we can write 
$$
\begin{aligned}
  \int_\Omega u \: \text{div} \phi \: \dd \Lambda(t) &\le \left(
    \sum_{k=1}^{\tau(\max g^+)} |\xi_k| \right) \left(
    \sum_{j=1}^{\tau(\max g^+)} \int_{\partial A_j} \langle \phi,
    \vartheta_j \rangle \dd \Theta_j^{n-1}(s) \right) \\ & \le \left(
    \sum_{k=1}^{\tau(\max g^+)} |\xi_k| \right) \sum_{j=1}^{\tau(\max
    g^+)}\Theta_j^{n-1}({\partial A_j})
\end{aligned}
$$
 The main question now is whether or not $\Theta_j^{n-1}( \partial A_j)$ are finite a.s. This is solely a property 
of the field $g$. We need a generalization of Rice's formula in order to respond to this question.

\begin{theorem}[{\cite[Thm.~6.8 and Prop.~6.12]{azais}}]\label{gaussian-level-set-thrm}
  \myhl{Let $\Omega$ be a compact set in $\reals^n$ with $n \ge 1$. Let $z: \Omega \mapsto \reals$ be a Gaussian random field so that $z \in C^2(\Omega)$ a.s. and
$\VV z(t) >0$ for all $t \in \Omega$. For a fixed constant $b \in \reals$, $ \EE {\Theta}^{n-1}( \{ t\in \Omega | z(t) = b\} ) <\infty$ 
if the pair $(z(t), \nabla z(t))$ have a  joint density on $\Omega \times \reals^n$ that is locally bounded.}
\end{theorem}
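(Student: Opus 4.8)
The plan is to prove the statement through the Kac--Rice formula for the expected surface measure of a level set, reducing finiteness to the joint-density hypothesis and the Gaussianity of the field. First I would fix a realization for which $z \in C^2(\Omega)$ (a full-probability event) and apply the coarea formula to the Lipschitz map $z$ on the compact domain $\Omega$. For every $\epsilon > 0$ this yields the identity
$$
\int_{\{ |z(t) - b| < \epsilon \}} |\nabla z(t)| \, \dd \Lambda(t) = \int_{b - \epsilon}^{b + \epsilon} \Theta^{n-1}( \{ z = u \} ) \, \dd \Lambda(u).
$$
Using the lower semicontinuity of $u \mapsto \Theta^{n-1}( \{ z = u \} )$ for a $C^2$ field (itself a Fatou-type consequence of the coarea identity), averaging over $(b - \epsilon, b + \epsilon)$ and letting $\epsilon \to 0$ gives the almost-sure bound
$$
\Theta^{n-1}( \{ z = b \} ) \le \liminf_{\epsilon \to 0} \frac{1}{2\epsilon} \int_{\{ |z(t) - b| < \epsilon \}} |\nabla z(t)| \, \dd \Lambda(t).
$$

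Next I would take expectations. Applying Fatou's lemma on the right-hand side and then Tonelli's theorem to exchange the spatial integral with the expectation (every integrand is nonnegative) produces
$$
\EE \, \Theta^{n-1}( \{ z = b \} ) \le \liminf_{\epsilon \to 0} \frac{1}{2\epsilon} \int_\Omega \EE \left[ |\nabla z(t)| \, \mb{1}_{\{ |z(t) - b| < \epsilon \}} \right] \dd \Lambda(t).
$$
This is precisely where the joint-density hypothesis enters: writing $p_t$ for the locally bounded joint density of the pair $(z(t), \nabla z(t))$, the inner expectation equals $\int_{|w - b| < \epsilon} \int_{\reals^n} |y| \, p_t(w, y) \, \dd \Lambda(y) \, \dd \Lambda(w)$, and local boundedness of $p_t$ together with the Gaussian tails of $\nabla z(t)$ justify passing to the limit, giving
$$
\EE \, \Theta^{n-1}( \{ z = b \} ) \le \int_\Omega \left( \int_{\reals^n} |y| \, p_t(b, y) \, \dd \Lambda(y) \right) \dd \Lambda(t) = \int_\Omega \EE \left[ |\nabla z(t)| \mid z(t) = b \right] p_{z(t)}(b) \, \dd \Lambda(t).
$$

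Finiteness of this final integral is where Gaussianity and compactness do the work. For a Gaussian field the pair $(z(t), \nabla z(t))$ is jointly Gaussian, so the conditional law of $\nabla z(t)$ given $z(t) = b$ is again Gaussian; hence $\EE [ |\nabla z(t)| \mid z(t) = b ]$ is finite and depends continuously on $t$. The marginal density $p_{z(t)}(b)$ is bounded because $\VV z(t) > 0$ and, by compactness of $\Omega$ and continuity of the covariance, $\inf_{t \in \Omega} \VV z(t) > 0$. Thus the integrand is bounded on the compact set $\Omega$ and the integral is finite.

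I expect the main obstacle to be the rigorous justification of the two limiting steps rather than the algebra. Specifically, the almost-sure coarea inequality requires one to argue that $\{ z = b \}$ carries no $n$-dimensional Lebesgue mass and that critical points lying on it are negligible --- both of which follow from the nondegeneracy of $z$ and the density hypothesis but must be verified carefully --- and the interchange of $\lim_{\epsilon \to 0}$ with the spatial integral must be dominated uniformly in $t$, which is exactly the role of local boundedness of $p_t$ (uniform over $t \in \Omega$ and $w$ near $b$). Once these two analytic points are secured, the remaining estimates reduce to standard properties of Gaussian densities on a compact parameter set.
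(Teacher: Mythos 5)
The paper does not prove this statement at all --- it is imported directly from A\-za\-{\"i}s and Wschebor (Thm.~6.8 together with Prop.~6.12), so there is no internal proof to compare against. Your sketch does follow the route of the cited source: the coarea/Kac--Rice reduction of $\EE\,\Theta^{n-1}(\{z=b\})$ to $\int_\Omega \EE\left[\,|\nabla z(t)|\mid z(t)=b\,\right]p_{z(t)}(b)\,\dd\Lambda(t)$, followed by Gaussian conditioning and the uniform lower bound $\inf_{t\in\Omega}\VV z(t)>0$ on the compact set $\Omega$. Those last steps are fine as you describe them.

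There is, however, one genuine gap, and it sits at the crux. The inequality $\Theta^{n-1}(\{z=b\})\le\liminf_{\epsilon\to 0}\frac{1}{2\epsilon}\int_{\{|z-b|<\epsilon\}}|\nabla z|\,\dd\Lambda$ is \emph{not} a ``Fatou-type consequence of the coarea identity.'' The coarea formula controls $\Theta^{n-1}(\{z=u\})$ only for Lebesgue-a.e.\ level $u$ and says nothing about the single level $u=b$; the map $u\mapsto\Theta^{n-1}(\{z=u\})$ is not lower semicontinuous for a general $C^2$ function. Concretely, if $z\equiv b$ on a ball (a perfectly good $C^\infty$ function), then $\Theta^{n-1}(\{z=b\})=+\infty$ while $\nabla z$ vanishes on a neighbourhood of that ball and the coarea integrals stay finite, so the claimed bound fails outright. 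What rescues the argument is a Bulinskaya-type lemma --- this is precisely the content of the cited Prop.~6.12 --- showing that under the locally bounded joint density hypothesis on $(z(t),\nabla z(t))$ there is almost surely no $t$ with $z(t)=b$ and $\nabla z(t)=0$, i.e.\ $b$ is a.s.\ a regular value; only then is $\{z=b\}$ a $C^1$ hypersurface whose measure is continuous in the level (away from $\partial\Omega$) and your liminf bound holds. You gesture at this in your closing paragraph as something ``to be verified carefully,'' but it is not a routine check: it is the second, independent place where the joint-density hypothesis is consumed, and without it the theorem's conclusion can fail. Supplying that lemma (a first-moment/covering argument using the local boundedness of the density of $(z(t),\nabla z(t))$ near the degenerate set $\{(b,0)\}$) would close the proof.
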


It is well known \cite[Thm.~2.2.2]{adler-geometry} that for $j \in \{ 1,\cdots, n\}$ the processes $g_j(t) := \frac{\partial g}{\partial t_j}(t)$ are themselves Gaussian processes with kernels $\kappa_j(r,s) = \frac{\partial^2}{\partial r_j \partial s_j} \kappa(r,s)$ whenever the second order partial derivative of the kernel exists. Using this fact it is straightforward to check that our choice of the field $g$ satisfies the 
assumptions of the above theorem. Therefore, $\Theta_j^{n-1}(\partial A_j)$ are finite a.s. and we conclude that 
$V(u) < \infty$ a.s. 
 }

 \myhl{We show two samples from the random field of \eqref{BV-process-in-2D-example} on the box $[0,1]^2$ in Figure~\ref{fig:stochastic-process-prior-sample}(c--d) with standard normal jumps. The choice of the field $g$ influences the shape of the discontinuity curves  of $u$. The main difficulty in proving  $u \in BV(\Omega)$ is in showing that the level sets of the underlying Gaussian field have finite length. Theorem~\ref{gaussian-level-set-thrm} allows us to relax the regularity assumptions 
on $g$ and take Gaussian fields that are in $C^2(\Omega)$ rather than $C^\infty(\Omega)$ but for less regular fields it is not clear whether the level sets have finite length. Finally, one can show that the law of the process in \eqref{BV-process-in-2D-example} is a Radon measure on $BV(\Omega)$. The method of proof is identical to our 
argument for the 1D case except that now we need a different version of Helly's selection principle \cite[Thm.~5.5]{evans-measure-theory} in order to construct compact sets in $BV(\Omega)$.}



 \section{Well-posed Bayesian inverse problems}\label{sec:well-posedness}
\myhl{ Recall from Section~\ref{sec:introduction} that we are interested in the problem of inferring 
a parameter $u \in X$ from data $y \in Y$ that are related via a generic stochastic mapping $\tilde{\mcl{G}}$ that models the physical process that generates the data as well as the measurement noise:
$$
y = \tilde{\mcl{G}}(u).
$$
In order to solve this problem we employ Bayes' rule \eqref{bayes-rule}.
In this section we collect certain conditions on the prior measure $\mu_0$ and the likelihood potential $\Phi$ that result in well-posed inverse problems. We consider a general enough setting that
 encompasses the heavy-tailed priors of Section~\ref{sec:non-gaussian-priors} and the stochastic process priors of 
Section~\ref{sec:stochastic-process-prior}. 
 We assume that the 
parameter space $X$ is a Banach space that is not necessarily separable (such as $BV$) and 
the prior measure $\mu_0$ is possibly heavy-tailed (such as the $G_{p,q}$-priors) and not necessarily Radon (such as the law of the
pure jump L{\'e}vy processes when  $\lambda$ is 
not finite).}

\myhl{The main results of this section are Theorems \ref{existence-uniqueness} and \ref{stability-TV}
that establish the existence, uniqueness and stability of the posterior measure. 
We acknowledge that these theorems are very similar to the results
 in \cite[Sec.~4.1]{stuart-bayesian-lecture-notes} and \cite{sullivan}. In comparison to these articles, we impose 
slightly different assumptions on the potential $\Phi$ and assume that the space $X$ is not necessarily separable.} \myhl{We also note that 
under the assumption that the prior measure $\mu_0$ is a Radon measure one can immediately generalize the 
result of \cite{stuart-bayesian-lecture-notes} to non-separable parameter spaces $X$
 using the fact that 
Radon measures on a Banach space are automatically concentrated on a separable subspace.}
\begin{theorem}[{\cite[Thm.~7.12.4]{bogachev2}}] \label{concentration-of-measure}
Let $\mu$ be a Radon probability measure on a Banach space $X$. Then, there exists
a reflexive and separable Banach space $E$ embedded in $X$ such that $\mu(X \setminus E) =0$ and the closed balls of $E$ are compact in $X$. 
\end{theorem}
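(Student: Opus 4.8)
The plan is to exploit the one distinguishing feature of a Radon measure --- its inner regularity --- and then to upgrade the resulting tightness into a genuine compactly-embedded reflexive separable subspace. First I would use inner regularity of $\mu$ to choose, for each $n \in \integers$, a compact set $K_n' \subseteq X$ with $\mu(X \setminus K_n') \le 2^{-n}$. Passing to finite unions I may assume $K_n' \subseteq K_{n+1}'$, and replacing each $K_n'$ by its closed absolutely convex hull $K_n := \overline{\mathrm{aco}}(K_n')$ --- which is again compact by Mazur's theorem in a Banach space --- I obtain an increasing sequence of compact, convex, symmetric sets with $\mu\bigl(X \setminus \bigcup_n K_n\bigr) = 0$.

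Second, I would compress this whole sequence into a single compact convex symmetric set. Writing $R_n := \sup_{x \in K_n}\|x\|_X < \infty$ and choosing $\epsilon_n := 2^{-n}(1+R_n)^{-1}$, the set $S := \{0\} \cup \bigcup_n \epsilon_n K_n$ is compact: any sequence either meets a fixed $\epsilon_n K_n$ infinitely often, where compactness applies, or escapes to larger indices, where $\|\cdot\|_X \le 2^{-n} \to 0$ forces convergence to $0$. Its closed absolutely convex hull $K := \overline{\mathrm{aco}}(S)$ is then compact, and by construction $K_n \subseteq \epsilon_n^{-1} K$ for every $n$.

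Third --- and this is the crux --- I would build a reflexive Banach space $E \hookrightarrow X$ whose closed unit ball contains $K$ and is relatively compact in $X$. Here I would invoke the Davis--Figiel--Johnson--Pe\l czy\'nski (DFJP) factorization: setting $U_n := 2^n K + 2^{-n} B_X$ with gauges $\|\cdot\|_{U_n}$ and $\|x\|_E := \bigl(\sum_n \|x\|_{U_n}^2\bigr)^{1/2}$, the space $E := \{x \in X : \|x\|_E < \infty\}$ is a Banach space with $K \subseteq B_E$ and a bounded injection into $X$. The content of the DFJP theorem is that weak compactness of the generating set yields reflexivity of $E$, while compactness of the generating set yields compactness of the embedding; since $K$ is compact (hence weakly compact) both conclusions hold simultaneously. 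This reflexivity is the delicate point, and the $\ell^2$-interpolation gauge is precisely the device that produces it; I expect this to be the main obstacle, and would either cite DFJP directly or reproduce the standard verification that $B_E$ is weakly compact.

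Finally, I would collect the three required properties. Concentration is immediate: $\bigcup_n K_n \subseteq \Span{K} \subseteq E$, so $\mu(X \setminus E) \le \mu\bigl(X \setminus \bigcup_n K_n\bigr) = 0$. Compactness of the closed balls of $E$ in $X$ is the compact-embedding conclusion of the DFJP step. Separability I would then deduce from these two facts together: the compact injection $j : E \to X$ is weak-to-norm continuous on bounded sets, so it carries the weakly compact ball $(B_E, w)$ homeomorphically onto the norm-compact metrizable set $j(B_E) \subseteq X$; hence $(B_E, w)$ is metrizable, which forces $E^\ast$ --- and therefore $E$ --- to be separable. Assembling these gives the reflexive, separable space $E$ with $\mu(X \setminus E) = 0$ and compact closed balls, as claimed.
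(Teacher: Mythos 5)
The paper does not prove this statement; it is quoted directly from Bogachev \cite[Thm.~7.12.4]{bogachev2}, and your argument --- inner regularity $\to$ increasing compact absolutely convex sets $\to$ a single compact convex symmetric generator $\to$ the Davis--Figiel--Johnson--Pe\l czy\'nski interpolation space --- is exactly the standard proof underlying that citation. The construction and the deduction of reflexivity, compact embedding, concentration, and separability are all correct (the only point worth making explicit is that weak-to-norm continuity of $j$ on $B_E$ follows because the weak and norm topologies coincide on the norm-compact set $j(B_E)$), so your proposal is a faithful reconstruction of the cited proof rather than a new route.
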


\myhl{It is important to note that while this theorem guarantees the existence of the separable space 
$E$ it does not provide us with a method for identifying $E$ or its norm. In the case of the product priors of 
Section~\ref{sec:non-gaussian-priors} one can argue that the measures are concentrated on a separable Hilbert space
but for the stochastic process priors of Section~\ref{sec:stochastic-process-prior} it is no longer clear 
what the space $E$ is and so it is more convenient for us to analyze the inverse problem on the 
ambient space $X$ rather than passing to the space $E$.}

 \myhl{We will present our well-posedness results using the total variation metric, since this metric is less often used in previous works,
 and refer the reader to \cite{stuart-bayesian-lecture-notes} for proofs using the Hellinger metric that can easily be 
generalized to our setting by comparison to the proofs using the total variation metric. Given the inequalities \eqref{tv-hellinger-equivalence}
we immediately see that well-posedness in one of these metrics implies well-posedness in the other but the convergence rates 
will differ.}

We start by presenting
minimal assumptions on the likelihood potential and the forward map and
make our way to more specific cases of inverse problems such as
problems with linear forward maps. In a nutshell, as we put more restrictions on
$\Phi$ we are able to relax our assumptions on $\mu_0$. In order to help with navigation through this section we present Table
\ref{tab:result-summary} that collects our main results and the key
underlying assumptions. 

\begin{table}[htp]
\small
  \centering
  \begin{tabular}[htp]{| m {.2 \textwidth} | m {.55\textwidth} | m {.15 \textwidth} |}
\hline
    Theorem/Corollary & Main assumptions  & type of result \\ \hline 
    Theorem~\ref{existence-uniqueness} & $\Phi$ is locally bounded and 
Lipschitz in $u$. & $\mu^y$ is well-defined  \\ \hline 
   Theorem~\ref{stability-TV} & $\Phi$ satisfies Assumption
                             \ref{assumption-on-likelihood} & $\mu^y$
                                                              depends
                                                              continuously
                                                              on $y$
    \\ \hline 
Corollary~\ref{well-posedness-with-bounded-moments} &
$\Phi$ has polynomial growth in $u$ and $\mu_0$ has finitely many
                                                  moments &
well-posedness \\ \hline 
Corollary~\ref{additive-noise-stability} & $\Phi \ge 0$ in addition to
                                        Assumption
                                        \ref{assumption-on-likelihood}
                                          & well-posedness \\ \hline 
Corollary~\ref{well-posed-linear-inverse-problem-with-ID-prior} &
$Y = \reals^m$,                                      measurement noise
                                                    is additive and
                                                    Gaussian,
          prior is ID & well-posedness  \\ \hline
Corollary~\ref{linear-inverse-problems-well-posedness} & $Y = \reals^m$,
                                                    forward map is
                                                    linear and
                                                    bounded,
                                                    measurement noise
                                                    is additive and
                                                    Gaussian &
                                                               well-posedness
    \\ \hline
Corollary~\ref{well-posedness-ellp-prior} & $Y = \reals^m$,
                                                    forward map is
                                                    linear and
                                                    bounded,
                                                    measurement noise
                                                    is additive and
                                                    Gaussian,
          $G_{p,q}$-prior & well-posedness \\ \hline 

  \end{tabular}
  \caption{Summary of the key theorems and corollaries of Section
    \ref{sec:well-posedness}. In each case we identify the key
    underlying assumptions as well as the type of final result.}
  \label{tab:result-summary}
\end{table}

We begin by
identifying some conditions on $\Phi$ that allow us to use a
very large class of prior measures including those that are heavy-tailed.

\begin{assumption} \label{assumption-on-likelihood}
Suppose that $X$ and $Y$ are Banach spaces and
the likelihood potential
$\Phi: X \times Y \mapsto \reals$
satisfies the
  following properties:  \label{forward-assumption}
  \begin{enumerate}[(i)]
  \item (Lower bound in $u$): There is a positive and
    non-decreasing function $f_1:\reals_+ \mapsto [1, \infty)$ so that $\forall r>0$, there is a
    constant $M(r) \in \reals$ such that, $$\Phi(u;y) \ge M(r) -
\log \left( f_1(\| u\|_X) \right), \qquad \forall u \in X \text{ and } \forall y \in B_Y(r).$$

    \item (Boundedness above): $\forall r>0$ there is a constant
      $K(r)>0$ such that
 $$ \Phi(u;y) \le K(r), \qquad \forall u\in B_X(r) \text{ and } \forall y \in B_Y(r). $$

      \item (Continuity in $u$): $\forall r>0$ there exists a 
constant $L(r)>0$ such that  $$ | \Phi(u_1;y) -
\Phi(u_2,y)| \le L(r) \| u_1 - u_2 \|_X, \qquad \forall u_1,u_2 \in B_X(r) \text{ and } y\in B_Y(r).$$ 

              \item (Continuity in $y$):
There is a positive and
        non-decreasing function $f_2: \reals_+ \mapsto \reals_+$ so that 
$ \forall r>0$, there is a 
constant $C(r) \in \reals$ such that  $$ | \Phi(u;y_1) -
\Phi(u,y_2)| \le C(r) f_2(\| u\|_X)  \| y_1 - y_2 \|_Y, \qquad \forall y_1,y_2 \in B_Y(r)\text{ and }\forall u \in X. $$ 
  \end{enumerate}
\end{assumption}

Our first task is to establish the existence and uniqueness of the
posterior measure. 

\begin{theorem} \label{existence-uniqueness}
Suppose $X$ is a Banach space, $\mu_0$ is a \myhl{Borel} probability
measure on $X$ and 
let $\Phi$ satisfy  Assumptions \ref{assumption-on-likelihood} (i),
(ii) and (iii) with a function $f_1 \ge 1$. If $ f_1( \| \cdot \|_X) \in
L^1(X, \mu_0)$
then the posterior $\mu^y$ given by \eqref{bayes-rule} is a well-defined
Borel probability measure on $X$. If $\mu_0$ is Radon then so is $\mu^y$.
\end{theorem}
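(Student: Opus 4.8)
The plan is to verify each requirement of Definition~\ref{def-existence-uniqueness}(1) in turn: measurability of the integrand, finiteness and strict positivity of the normalizing constant $Z(y)$, assembly of $\mu^y$ as a density against $\mu_0$, and finally the Radon claim. To begin, I would fix $y$ and observe that $u \mapsto \exp(-\Phi(u;y))$ is Borel measurable and non-negative, so that $Z(y)$ is a well-defined element of $[0,\infty]$. This is immediate from Assumption~\ref{assumption-on-likelihood}(iii): local Lipschitz continuity on every ball $B_X(r)$ makes $u \mapsto \Phi(u;y)$ continuous on all of $X$, hence Borel measurable, and the exponential preserves this.

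The crucial step is the finiteness of $Z(y)$, which is precisely where the balance between the growth of the likelihood and the tail of the prior enters. Fixing $y$ and choosing $r$ with $y \in B_Y(r)$, Assumption~\ref{assumption-on-likelihood}(i) gives $\Phi(u;y) \ge M(r) - \log f_1(\| u\|_X)$, so that $\exp(-\Phi(u;y)) \le e^{-M(r)} f_1(\| u\|_X)$ for every $u \in X$. Integrating against $\mu_0$ and invoking the hypothesis $f_1(\| \cdot\|_X) \in L^1(X,\mu_0)$ yields $Z(y) \le e^{-M(r)} \int_X f_1(\| u\|_X)\,\dd\mu_0(u) < \infty$. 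For strict positivity I would use Assumption~\ref{assumption-on-likelihood}(ii): on $B_X(r)$ one has $\exp(-\Phi(u;y)) \ge e^{-K(r)}$, hence $Z(y) \ge e^{-K(r)}\,\mu_0(B_X(r))$; since $\mu_0$ is a probability measure and $B_X(r)\uparrow X$, enlarging $r$ if necessary makes $\mu_0(B_X(r))>0$ and thus $Z(y) > 0$.

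With $0 < Z(y) < \infty$ secured, I would define $\mu^y(A) := Z(y)^{-1}\int_A \exp(-\Phi(u;y))\,\dd\mu_0(u)$ for Borel $A \subseteq X$. Countable additivity follows from the monotone convergence theorem, $\mu^y(X)=1$ by the choice of $Z(y)$, and $\mu^y \ll \mu_0$ holds by construction with Radon--Nikodym derivative equal to the prescribed density; uniqueness is then just the essential uniqueness of Radon--Nikodym derivatives, so that any probability measure satisfying \eqref{bayes-rule} must agree with $\mu^y$. This settles that $\mu^y$ is a well-defined Borel probability measure.

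For the final assertion, assuming $\mu_0$ Radon I would show $\mu^y$ is inner regular via truncation. Writing $g := Z(y)^{-1}\exp(-\Phi(\cdot;y)) \in L^1(X,\mu_0)$ and $g_N := \min(g,N)$, for any Borel $A$ and $\epsilon>0$ I split $\mu^y(A\setminus K) = \int_{A\setminus K}(g-g_N)\,\dd\mu_0 + \int_{A\setminus K} g_N\,\dd\mu_0$; the first term is at most $\int_{\{g>N\}} g\,\dd\mu_0 < \epsilon/2$ for $N$ large since $g \in L^1(X,\mu_0)$, while the second is at most $N\,\mu_0(A\setminus K) < \epsilon/2$ once a compact $K \subseteq A$ with $\mu_0(A\setminus K)$ small is chosen by inner regularity of $\mu_0$. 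I expect this Radon step, rather than the existence argument, to be the main obstacle: finiteness and positivity of $Z(y)$ follow almost mechanically from Assumptions (i) and (ii), whereas manufacturing compact sets for $\mu^y$ from those for $\mu_0$ in the absence of a bounded density requires the truncation above. (Alternatively one may simply invoke the standard fact that a measure with an integrable density against a Radon measure is itself Radon.)
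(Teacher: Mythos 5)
Your proposal is correct and follows essentially the same route as the paper: measurability from Assumption (iii), finiteness of $Z(y)$ from Assumption (i) together with $f_1(\|\cdot\|_X)\in L^1(X,\mu_0)$, positivity from Assumption (ii) plus the fact that $\mu_0$ must charge some ball, and the Radon property from absolute continuity with respect to $\mu_0$. The only cosmetic difference is that you prove the Radon step by an explicit truncation argument, whereas the paper simply cites the standard fact (Bogachev, Lem.~7.1.11) that a measure with a density against a Radon measure is Radon --- an alternative you yourself note at the end.
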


\begin{proof}
  Our proof will closely follow the approach of
  \cite[Thm.~4.3]{sullivan}
and \cite[Thm.~2.2]{hosseini-convex-prior}. 
 Assumption \ref{assumption-on-likelihood}(iii) implies  the
continuity of $\Phi$ on $X$ which in turn
implies that $\Phi( \cdot, y) :X \mapsto \reals$
is $\mu_0$-measurable. We will now show that the normalizing constant
satisfies $0<Z(y) <\infty$ which proves that $\mu^y$ is well-defined.  
\myhl{The assertion that
$\mu^y$ inherits the Radon property of $\mu_0$ will then follow from the absolute continuity of
$\mu^y$ with respect to $\mu_0$ \cite[Lem.~7.1.11]{bogachev2}.}

Following Assumption \ref{assumption-on-likelihood}(i) we can write
$$
\begin{aligned}
  Z(y) &
  \le \int_X \exp( \log( f_1( \| u \|_X)) - M ) \dd
   \mu_0(u) = \exp(-M) \int_X  f_1( \| u \|_X )
\dd \mu_0(u) <\infty.
\end{aligned}
$$
We now need to show that the normalizing constant $Z(y)$ does not vanish. 
\myhl{It follows from Assumption \ref{assumption-on-likelihood} that for $R > 0$
$$
\begin{aligned}
  Z(y) & 
   \ge 
 \int_{B_X(R)} \exp(-K) \dd \mu_0(u)
 = \exp(-K) \mu_0( B_X(R)).
\end{aligned}
$$
 However, $\mu_0(B_X(R) ) > 0$
for large enough $R$.} \myhl{To see this consider the disjoint sets $A_k :=\{ u | k-1\le \| u\|_X <
k\}$ for $k \in \integers$. The $A_k$ are open and hence 
measurable and $\sum_{k=1}^\infty \mu_0(A_k) =
\mu_0(\bigcup_{k=1}^\infty A_k) = \mu(X) = 1$. Then the measure of at least one
of the $A_k$ has to be nonzero.}
\end{proof}


 We now establish the stability of Bayesian inverse problems with respect to 
perturbations in the data. Similar versions of the following theorems are available for Gaussian priors in \cite{stuart-acta-numerica}, for 
Besov priors in \cite{dashti-besov}, for convex priors in
\cite{hosseini-convex-prior} and for heavy-tailed priors on separable
Banach spaces \myhl{in \cite{stuart-bayesian-lecture-notes,sullivan}.} 

\begin{theorem}\label{stability-TV}
  Suppose that $X$ is a Banach space, $\mu_0$ is a \myhl{Borel} probability
  measure on $X$ and $\Phi$ satisfies Assumptions
  1(i), (ii) and (iv) with functions $f_1, f_2$. Let 
$\mu^y$ and $\mu^{y'}$ be two measures defined via \eqref{bayes-rule} for any $y$ and $y' \in Y$,
 both absolutely continuous with respect to 
$\mu_0$. \myhl{
\begin{enumerate}[(i)]
\item If $f_2(\| \cdot \|_X) f_1(\| \cdot\|_X) \in L^1(X, \mu_0)$ then
  $\forall r > 0$, there exists a constant $C(r) > 0$
   so that
  $ d_{TV} (\mu^y, \mu^{y'}) \le C \| y - y' \|_Y, \forall y,y'\in B_Y(r)  $
\item If instead $(f_2(\|\cdot\|_X))^2 f_1(\|\cdot \|_X) \in L^1(X, \mu_0)$
then $\forall r >0$ there exists 
a constant $C'(r) > 0$  so that 
$d_H (\mu^y, \mu^{y'}) \le C' \| y - y' \|_Y.$
\end{enumerate}}
\end{theorem}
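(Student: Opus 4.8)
The plan is to follow the template of \cite[Thm.~4.3]{sullivan} and \cite[Thm.~2.2]{hosseini-convex-prior}, working directly with the densities with respect to the common dominating measure $\mu_0$ so that the metrics in \eqref{TV-metric} become integrals against $\mu_0$. Throughout I fix $r>0$ and take $y,y'\in B_Y(r)$. The first preparatory step is to obtain two-sided bounds on the normalizing constants that are uniform over $B_Y(r)$. The upper bound $Z(y)\le \exp(-M(R))\int_X f_1(\|u\|_X)\,\dd\mu_0(u)<\infty$ comes directly from Assumption~\ref{assumption-on-likelihood}(i), exactly as in the proof of Theorem~\ref{existence-uniqueness}. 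The lower bound is obtained by choosing $R\ge r$ large enough that $\mu_0(B_X(R))>0$ (such $R$ exists by the disjoint-annulus argument in the proof of Theorem~\ref{existence-uniqueness}) and invoking Assumption~\ref{assumption-on-likelihood}(ii) to get $Z(y)\ge \exp(-K(R))\mu_0(B_X(R))=:Z_{\min}(r)>0$ for every $y\in B_Y(r)$.

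The core estimate is a Lipschitz bound on the unnormalized density in $y$. Using the elementary inequality $|\exp(-a)-\exp(-b)|\le |a-b|\max(\exp(-a),\exp(-b))$ together with Assumption~\ref{assumption-on-likelihood}(iv) and the lower bound Assumption~\ref{assumption-on-likelihood}(i), I would show pointwise that
$$
|\exp(-\Phi(u;y))-\exp(-\Phi(u;y'))|\le C(r)\exp(-M(R))\,f_2(\|u\|_X)\,f_1(\|u\|_X)\,\|y-y'\|_Y.
$$
Integrating against $\mu_0$ and using $f_1 f_2\in L^1(X,\mu_0)$ yields the auxiliary bound $|Z(y)-Z(y')|\lesssim\|y-y'\|_Y$ as an immediate corollary. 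For part (i) I then split
$$
\frac{\dd\mu^y}{\dd\mu_0}-\frac{\dd\mu^{y'}}{\dd\mu_0}=\frac{1}{Z(y)}\left(\exp(-\Phi(u;y))-\exp(-\Phi(u;y'))\right)+\left(\frac{1}{Z(y)}-\frac{1}{Z(y')}\right)\exp(-\Phi(u;y')),
$$
bound the first summand by the displayed pointwise estimate and the uniform lower bound on $Z$, and bound the second using $|Z(y)^{-1}-Z(y')^{-1}|=|Z(y)-Z(y')|/(Z(y)Z(y'))\lesssim\|y-y'\|_Y$ together with $\int_X\exp(-\Phi(u;y'))\,\dd\mu_0=Z(y')<\infty$. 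Integrating and halving gives $d_{TV}(\mu^y,\mu^{y'})\le C(r)\|y-y'\|_Y$.

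For part (ii) the only change is that the Hellinger integrand involves the difference of the \emph{square roots} of the densities. I would write $\sqrt{\dd\mu^y/\dd\mu_0}=Z(y)^{-1/2}\exp(-\Phi(u;y)/2)$, perform the analogous two-term splitting, and apply $(a+b)^2\le 2a^2+2b^2$. The half-exponent version of the elementary inequality, $|\exp(-a/2)-\exp(-b/2)|\le\tfrac{1}{2}|a-b|\max(\exp(-a/2),\exp(-b/2))$, produces a factor $f_2(\|u\|_X)^2$ after squaring, so the first term is controlled by $\int_X f_2(\|\cdot\|_X)^2 f_1(\|\cdot\|_X)\,\dd\mu_0$; this is precisely where the stronger hypothesis $f_1 f_2^2\in L^1(X,\mu_0)$ enters. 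The second term reuses the bound on $|Z(y)^{-1/2}-Z(y')^{-1/2}|$, which follows from $|Z(y)-Z(y')|\lesssim\|y-y'\|_Y$ and the uniform lower bound. Note that $f_1 f_2\in L^1$, needed to control $|Z(y)-Z(y')|$, follows from $f_1\in L^1$ (in force since $\mu^y$ is assumed to exist) and $f_1 f_2^2\in L^1$ by Cauchy--Schwarz.

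I do not expect a genuine obstacle here: the argument is a careful but routine bookkeeping exercise. The one point that requires attention is the uniformity of all constants over $y,y'\in B_Y(r)$, which forces the radius used for the lower bound on $Z$ and for Assumption~\ref{assumption-on-likelihood}(ii) to be chosen as $R=\max(r,r_0)$ with $r_0$ large enough to charge a ball of positive $\mu_0$-measure, and the need to establish the normalizing-constant estimate $|Z(y)-Z(y')|\lesssim\|y-y'\|_Y$ as a separate step before feeding it into the second summand of each split.
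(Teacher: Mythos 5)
Your argument is correct and follows essentially the same route as the paper's: the same two-sided bounds on $Z(y)$ inherited from Theorem~\ref{existence-uniqueness}, the same mean-value/pointwise Lipschitz estimate on $\exp(-\Phi)$ combining Assumptions~\ref{assumption-on-likelihood}(i) and (iv), the resulting bound $|Z(y)-Z(y')|\lesssim \|y-y'\|_Y$, and the same add-and-subtract splitting of the density difference into a normalization term and an exponential-difference term. The only difference is that the paper proves only part (i) and defers the Hellinger case to the literature, whereas you carry out part (ii) explicitly (correctly, including the Cauchy--Schwarz observation that $f_1 f_2^2\in L^1$ together with $f_1\in L^1$ yields $f_1f_2\in L^1$), which is the standard argument the paper has in mind.
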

\begin{proof}
\myhl{We will only prove (i) and refer the reader to \cite[Sec.~4.1]{stuart-bayesian-lecture-notes}
for the proof of (ii) that will readily generalize to our setting.}
Consider the normalizing constants $Z(y)$ and $Z(y')$. We have already
established in the proof of Theorem \ref{existence-uniqueness} that neither of these 
constants will vanish and they are both bounded. Thus the measures
$\mu^y$ and $\mu^{y'}$ are well-defined.
Applying the mean value theorem to the exponential function and using
Assumptions 1(i), (iv) 
and the assumption that $f_2(\| \cdot\|_X)f_1(\|\cdot\|_X) \in L^1(X, \mu_0)$ we obtain
\begin{equation}\label{normalizing-constant-difference}
\begin{aligned}
| Z(y) - Z(y') | &\le  \int_X \exp( -\Phi(u;y) ) | \Phi(u;y) - \Phi(u;y') | \dd \mu_0(u)  \\
&\le \left( \int_X \exp( \log(  f_1(\| u \|_X)) - M) C f_2(\|
  u\|_X) \dd \mu_0(u) \right) \| y - y' \|_Y  \\
& \le C\exp( - M) \left( \int_X f_1(\| u\|_X)f_2(\| u
  \|_X) \dd \mu_0(u) \right) \| y - y' \|_Y 
 \lesssim \| y -y' \|_Y.
\end{aligned}
\end{equation}
Following the definition of the total variation distance we have
$$
\begin{aligned}
  2 d_{TV} ( \mu^y, \mu^{y'})  &= \int_X \left| Z(y)^{-1} \exp( -
  \Phi(u;y) ) - Z(y')^{-1} \exp( - \Phi(u, y')) \right| \dd \mu_0(u)
\\ 
& \le \int_X \left| Z(y)^{-1} \exp( -
  \Phi(u;y) ) - Z(y')^{-1} \exp( - \Phi(u, y)) \right| \dd \mu_0(u) \\
& \quad + 
Z(y')^{-1} \int_X \left|\exp( -
  \Phi(u;y) ) - \exp( - \Phi(u, y')) \right| \dd \mu_0(u) =: I_1 + I_2.
\end{aligned}
$$
 Now using \eqref{normalizing-constant-difference} we have 
$$
I_1 = | Z(y)^{-1} - Z(y')^{-1} | Z(y) = \frac{Z(y)}{Z(y')} | Z(y') -
Z(y)| \lesssim  \| y - y'\|_X.
$$
Furthermore, using the mean value theorem, Assumption
\ref{assumption-on-likelihood} (i) and (iv) we can write
$$
\begin{aligned}
{Z(y')}{I_2} & =  \int_X \left|\exp( -
  \Phi(u;y) ) - \exp( - \Phi(u, y')) \right| \dd \mu_0(u) \\
& \le \int_X \exp( -
  \Phi(u;y) ) \left|  \Phi(u;y') - \Phi(u, y) \right| \dd \mu_0(u) \\
 & \le C \exp(-M) \left( \int_X \exp( \log(f_1( \| u\|_X) ))
   f_2(\| u\|_X) \dd \mu_0(u) \right) \| y - y'\|_Y 
 \lesssim \| y - y' \|_Y.
\end{aligned}
$$
\end{proof}

\myhl{The main distinction between the choice of the metrics in Theorem \ref{stability-TV} is} that in order to obtain the same rate
of convergence in the Hellinger metric we need a (possibly) stronger assumption
regarding the integrability of $f_1(\|u\|_X)$ and $f_2(\|u\|_X)$.
So far we
encountered conditions of the form $ (f_2(\| u\|_X))^p f_1(\|
u\|_X) \in L^1(X, \mu_0)$ for
$p \in \{ 0, 1, 2\}$. Intuitively, these conditions identify the interplay
between the growth of $\Phi(u;y)$ as a function of $\| u\|_X$ and the tail
behavior of the prior $\mu_0$. 

\begin{corollary}\label{well-posedness-with-bounded-moments}~
Suppose that $\Phi$ satisfies the conditions of Assumption
  \ref{assumption-on-likelihood}
with $f_1(t) = f_2(t) = \max \{1, |t|^p\}$ for $p \ge 0$ and $\mu_0$ is a \myhl{Borel}
probability measure on $X$. \myhl{If $\mu_0$ has bounded raw moments of degree up to $\lceil 2p \rceil$
then the Bayesian inverse problem \eqref{bayes-rule} is
well-posed in both the total variation and Hellinger metrics.}
\end{corollary}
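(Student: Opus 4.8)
The plan is to read off well-posedness from the two master results, Theorems \ref{existence-uniqueness} and \ref{stability-TV}, by checking that the moment hypothesis on $\mu_0$ supplies exactly the integrability conditions they require, and then to transfer well-posedness from the total variation metric to the Hellinger metric through the equivalence \eqref{tv-hellinger-equivalence}. The structural conditions (i)--(iv) of Assumption \ref{assumption-on-likelihood} hold by hypothesis (with $f_1 = f_2 = \max\{1,|t|^p\} \ge 1$), so the only thing left to verify is the integrability of the relevant products of $f_1(\|\cdot\|_X)$ and $f_2(\|\cdot\|_X)$. The key observation is that Definition \ref{def-existence-uniqueness} requires only continuity of $y \mapsto \mu^y$, not a prescribed rate, which is precisely what lets me avoid the stronger moment bound that a linear Hellinger estimate would demand.

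First I would reduce every integrability requirement to a single raw moment. Since $\mu_0(X) = 1$ we have $\max\{1, \|u\|_X^s\} \le 1 + \|u\|_X^s$, so $\max\{1,\|\cdot\|_X^s\} \in L^1(X, \mu_0)$ precisely when the $s$-th raw moment $\int_X \|u\|_X^s \, \dd\mu_0(u)$ is finite. For the chosen $f_1, f_2$ one has $f_1(\|u\|_X) = \max\{1,\|u\|_X^p\}$ and $f_1(\|u\|_X) f_2(\|u\|_X) = \max\{1, \|u\|_X^{2p}\}$, so existence requires the $p$-th moment and the total variation bound requires the $2p$-th moment. By Lyapunov's inequality for the probability measure $\mu_0$, a finite raw moment of degree $\lceil 2p \rceil$ dominates all lower-order moments, and in particular the (possibly non-integer) moments of degree $2p$ and $p$ because $p \le 2p \le \lceil 2p \rceil$. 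Hence both $f_1(\|\cdot\|_X) \in L^1(X,\mu_0)$ and $f_1(\|\cdot\|_X) f_2(\|\cdot\|_X) \in L^1(X,\mu_0)$.

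With these two facts, Theorem \ref{existence-uniqueness} yields a well-defined posterior $\mu^y \ll \mu_0$ for every $y \in Y$, and Theorem \ref{stability-TV}(i) yields, for each $r > 0$, a constant $C(r) > 0$ with $d_{TV}(\mu^y, \mu^{y'}) \le C(r)\|y - y'\|_Y$ for all $y, y' \in B_Y(r)$. This already gives the $\epsilon$--$\delta$ stability of Definition \ref{def-existence-uniqueness} for $d_{TV}$, so the inverse problem is well-posed in the total variation metric.

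Finally I would promote this to the Hellinger metric. Existence and uniqueness are metric-independent, and for stability the left inequality in \eqref{tv-hellinger-equivalence} gives $2 d_H^2(\mu^y, \mu^{y'}) \le d_{TV}(\mu^y, \mu^{y'}) \le C(r)\|y - y'\|_Y$, whence $d_H(\mu^y, \mu^{y'}) \le (C(r)/2)^{1/2} \|y - y'\|_Y^{1/2} \to 0$ as $\|y - y'\|_Y \to 0$. Thus the stability condition also holds for $d_H$, now with a square-root modulus of continuity, and the problem is well-posed in both metrics. The only subtle point --- and the reason the hypothesis is $\lceil 2p \rceil$ moments rather than $\lceil 3p \rceil$ --- is this last transfer: applying Theorem \ref{stability-TV}(ii) directly would require $(f_2(\|\cdot\|_X))^2 f_1(\|\cdot\|_X) = \max\{1, \|\cdot\|_X^{3p}\} \in L^1(X,\mu_0)$ and hence a $3p$-th moment, whereas treating well-posedness as a purely topological property and passing through the metric equivalence delivers Hellinger well-posedness at the mild cost of a weaker convergence rate.
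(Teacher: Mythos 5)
Your proposal is correct and follows the route the paper intends: the paper states this corollary without proof as a direct consequence of Theorems \ref{existence-uniqueness} and \ref{stability-TV}(i) (the $\lceil 2p\rceil$-moment hypothesis giving $f_1(\|\cdot\|_X)\in L^1(X,\mu_0)$ and $f_1(\|\cdot\|_X)f_2(\|\cdot\|_X)=\max\{1,\|\cdot\|_X^{2p}\}\in L^1(X,\mu_0)$ via Lyapunov's inequality), with Hellinger well-posedness obtained from the total variation bound through \eqref{tv-hellinger-equivalence} at the cost of a square-root rate, exactly as the paper's surrounding discussion indicates. Your closing remark correctly identifies why the hypothesis is $\lceil 2p\rceil$ rather than the $\lceil 3p\rceil$ that a linear Hellinger rate via Theorem \ref{stability-TV}(ii) would require.
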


\begin{corollary}~
Suppose that $\mu_0$ is a \myhl{Borel} probability measure on $X$ and 
$\exp( b \| \cdot \|_X) \in
L^1(X, \mu_0)$ for some constant $b > 0$. Then 
the Bayesian inverse problem \eqref{bayes-rule} is
well-posed in both the total variation and Hellinger metrics whenever 
$\Phi$ satisfies the conditions of Assumption
  \ref{assumption-on-likelihood} with functions $f_1, f_2$ that are
  polynomially bounded.
\end{corollary}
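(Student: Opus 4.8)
The plan is to deduce this corollary directly from Theorem~\ref{existence-uniqueness} and Theorem~\ref{stability-TV}, whose hypotheses require only the integrability against $\mu_0$ of certain products of $f_1(\|\cdot\|_X)$ and $f_2(\|\cdot\|_X)$. The entire argument therefore reduces to a single elementary observation: exponential integrability of the norm dominates polynomial integrability.

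First I would record the comparison estimate. Since $f_1$ and $f_2$ are polynomially bounded, there is an integer $d$ and a constant $A>0$ with $f_1(t), f_2(t) \le A(1 + t^d)$ for all $t \in \reals_+$. Consequently each of the products appearing in the hypotheses of the two cited theorems --- namely $f_1(\|\cdot\|_X)$, $f_2(\|\cdot\|_X) f_1(\|\cdot\|_X)$, and $(f_2(\|\cdot\|_X))^2 f_1(\|\cdot\|_X)$ --- is again polynomially bounded in $\|\cdot\|_X$, say dominated by $A'(1 + \|\cdot\|_X^{D})$ for some degree $D$ and constant $A'$. The key elementary fact is that for every $b>0$ and every $D \ge 0$ the function $t \mapsto t^{D} e^{-bt}$ is bounded on $\reals_+$, so that $1 + t^{D} \le C_{b,D}\, \exp(b t)$ for a finite constant $C_{b,D}$.

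Next, combining this bound with the standing assumption $\exp(b\|\cdot\|_X) \in L^1(X,\mu_0)$, each of the three products is bounded above by a constant multiple of $\exp(b\|\cdot\|_X)$ and hence lies in $L^1(X,\mu_0)$. In particular $f_1(\|\cdot\|_X)\in L^1(X,\mu_0)$, so Theorem~\ref{existence-uniqueness} applies and produces a well-defined posterior $\mu^y \ll \mu_0$; moreover both $f_2(\|\cdot\|_X)f_1(\|\cdot\|_X)$ and $(f_2(\|\cdot\|_X))^2 f_1(\|\cdot\|_X)$ lie in $L^1(X,\mu_0)$, so parts (i) and (ii) of Theorem~\ref{stability-TV} apply and yield the local Lipschitz stability estimates in the total variation and Hellinger metrics respectively. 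Putting existence, uniqueness and stability together gives well-posedness in both metrics in the sense of Definition~\ref{def-existence-uniqueness}.

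The hard part is essentially vacuous here: the only thing to verify is the domination of polynomials by the exponential, which is immediate. The single point deserving a line of care is ensuring the degree $D$ used for the squared factor $(f_2)^2 f_1$ is taken large enough, since squaring $f_2$ roughly doubles its degree; but because $\exp(b\,\cdot)$ dominates monomials of \emph{every} degree for any fixed $b>0$, this presents no obstacle and no additional restriction on $b$ beyond positivity is required.
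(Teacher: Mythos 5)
Your proposal is correct and is exactly the argument the paper intends (the corollary is stated without proof precisely because it reduces to the elementary domination $1+t^{D}\le C_{b,D}\exp(bt)$ followed by an appeal to Theorems~\ref{existence-uniqueness} and~\ref{stability-TV}). No gaps; the observation that no extra restriction on $b$ is needed because the exponential dominates monomials of every degree is the right point to flag.
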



 For the
remainder of this section we will 
focus on specific classes of 
likelihood potentials $\Phi$ which allow us to further relax 
our assumption regarding the tail behavior of $\mu_0$. The rest of our results follow from Theorems~\ref{existence-uniqueness}
and \ref{stability-TV} but they are of great interest in practical applications. We start with
the case of additive noise models and consider linear inverse problems
afterwards. 

\subsection{The case of additive noise models}

Additive noise models have a special place in practical applications
due to their convenience and flexibility \cite{somersalo}. 
Suppose that the data is finite dimensional and, without
loss of generality, take $Y = \reals^m$,
$m \in \mbb{N}$. Now suppose that $y\in Y$ is related
to the parameter $u \in X$ via the model 
\begin{equation}\label{additive-noise-model}
y = \mcl{G}(u) + \eta \qquad \eta \sim \pi(y) \dd \Lambda(y)
\end{equation}
where $\pi(y)$ is the Lebesgue density of the measurement noise
$\eta$ and $\mcl{G}: X \mapsto \reals^m$ is the forward map. 
 It is straightforward to check that under these assumptions
\begin{equation} \label{additive-noise-likelihood}
\Phi(u;y) = -\log \pi( \mcl{G}(u) - y ).
\end{equation}
In particular if $\eta \sim \mcl{N}(0, \pmb{\Sigma})$ with an $m
\times m$ positive
definite matrix $\pmb{\Sigma}$ then 
\begin{equation} \label{gaussian-likelihood}
\Phi(u;y) = \frac{1}{2} \left\|  (\mcl{G}(u) - y) \right\|_{\pmb{\Sigma}}^2.
\end{equation}
Now if $\log \pi(y) \le 0$ (which is clearly the case when
$\eta$ is Gaussian or Laplace) then $\Phi(u;y)$ will satisfy
Assumption \ref{assumption-on-likelihood}(i) with the constant $ M 
=  0$ and $f_1(x) = 1$. This observation will allow us to relax our
assumption on the tail behavior of the prior whenever the measurement
noise is additive.

\begin{corollary} \label{additive-noise-stability}
Suppose $Y = \reals^m$, $X$ is a Banach space and $\Phi(u;y) \ge 0$ and it satisfies Assumptions
(ii) and (iv) with a function $f_2$.  Suppose that the
prior measure $\mu_0$ is a \myhl{Borel} probability measure on $X$ and
let 
$\mu^y$ and $\mu^{y'}$ be two measures defined via \eqref{bayes-rule}
for $y$ and $y' \in Y$. Then the posterior measure $\mu^y$ is
well-defined and
\begin{enumerate}[(i)]
\item If $f_2( \|
  \cdot \|_X) \in L^1(X, \mu_0)$ then $\forall r > 0, \exists C(r) > 0$ so that 
$
d_{TV}(\mu^y, \mu^{y'}) \le C(r) \| y - y' \|_Y \text{ for all } y,y' \in B_Y(r).
$
\item  If $f_2(\|\cdot\|_X) \in L^2(X, \mu_0)$ then $\exists C'(r) >0$ 
$
d_H (\mu^y, \mu^{y'}) \le C'(r) \| y - y' \|_Y.
$
\end{enumerate}
\end{corollary}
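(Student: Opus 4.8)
The plan is to recognise this corollary as the specialisation of Theorems~\ref{existence-uniqueness} and~\ref{stability-TV} to the case $f_1 \equiv 1$, a choice that the hypothesis $\Phi \ge 0$ makes available. Indeed, if $\Phi(u;y) \ge 0$ for all $u \in X$ and $y \in Y$, then Assumption~\ref{assumption-on-likelihood}(i) holds with the constant function $f_1 \equiv 1$ and $M(r) = 0$ for every $r$, since $M(r) - \log(f_1(\|u\|_X)) = 0 \le \Phi(u;y)$. Together with the assumed parts~(ii) and~(iv), this means that $\Phi$ meets exactly the hypotheses needed to invoke both earlier theorems with $f_1 \equiv 1$, and the only remaining work is to read off what the integrability conditions become under this choice.

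For well-definedness I would apply Theorem~\ref{existence-uniqueness}. Since $f_1 \equiv 1$ we have $f_1(\|\cdot\|_X) \equiv 1 \in L^1(X,\mu_0)$ automatically, because $\mu_0(X) = 1$; hence the integrability hypothesis of that theorem is vacuous here and $\mu^y$ (and likewise $\mu^{y'}$) is a well-defined Borel probability measure. Equivalently one can argue directly: $\Phi \ge 0$ gives $\exp(-\Phi(u;y)) \le 1$, so $Z(y) \le 1 < \infty$, while the lower bound $Z(y) \ge \exp(-K(r))\,\mu_0(B_X(r)) > 0$ for $r$ large follows from Assumption~\ref{assumption-on-likelihood}(ii) exactly as in the proof of Theorem~\ref{existence-uniqueness}. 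In either route the two posteriors are absolutely continuous with respect to $\mu_0$ by construction, which is the standing hypothesis required by Theorem~\ref{stability-TV}.

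The stability estimates then follow by feeding $f_1 \equiv 1$ into Theorem~\ref{stability-TV}. For part~(i), the condition $f_2(\|\cdot\|_X) f_1(\|\cdot\|_X) \in L^1(X,\mu_0)$ collapses to $f_2(\|\cdot\|_X) \in L^1(X,\mu_0)$, which is precisely the hypothesis, and the conclusion $d_{TV}(\mu^y,\mu^{y'}) \le C(r)\|y-y'\|_Y$ on $B_Y(r)$ carries over verbatim. For part~(ii), the condition $(f_2(\|\cdot\|_X))^2 f_1(\|\cdot\|_X) \in L^1(X,\mu_0)$ collapses to $f_2(\|\cdot\|_X) \in L^2(X,\mu_0)$, again the stated hypothesis, giving $d_H(\mu^y,\mu^{y'}) \le C'(r)\|y-y'\|_Y$.

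There is essentially no analytic obstacle beyond this bookkeeping; the single point deserving care is the measurability of $\Phi(\cdot,y)$, which underlies the very definition of $Z(y)$ and which in Theorem~\ref{existence-uniqueness} is supplied by the continuity Assumption~\ref{assumption-on-likelihood}(iii) that is absent here. In the additive-noise setting this measurability is inherited from that of the forward map $\mcl{G}$ through the representation $\Phi(u;y) = -\log\pi(\mcl{G}(u)-y)$. The conceptual content of the corollary is therefore simply that non-negativity of $\Phi$ (automatic for Gaussian or Laplace noise, where $\log\pi \le 0$) decouples the existence of the posterior from any tail decay of the prior, transferring the only remaining integrability requirement onto $f_2$ alone.
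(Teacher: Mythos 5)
Your proposal is correct and follows essentially the same route as the paper, which leaves this corollary unproved precisely because it is the specialisation of Theorems~\ref{existence-uniqueness} and~\ref{stability-TV} to $f_1\equiv 1$, $M=0$, enabled by $\Phi\ge 0$ (this is stated explicitly in the paragraph preceding the corollary). Your added remark about measurability of $\Phi(\cdot;y)$ in the absence of Assumption~\ref{assumption-on-likelihood}(iii) is a fair observation about a point the paper glosses over, and your direct bounds $Z(y)\le 1$ and $Z(y)\ge \exp(-K)\mu_0(B_X(R))>0$ dispose of it cleanly.
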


At this point it is natural to identify conditions on the distribution
of the noise and
the forward operator that guarantee that the
 likelihood potential of \eqref{additive-noise-likelihood} satisfies
 the conditions of Assumption \ref{assumption-on-likelihood}.
We will address this when $\eta$ is
Gaussian but our approach can be generalized to other types of
additive noise models.
 \begin{theorem}\label{assumption-on-forward-map}
Consider the additive noise model of \eqref{additive-noise-model} 
when $\eta \sim \mcl{N}(0, \pmb{\Sigma})$ and $\pmb{\Sigma}$ is a
positive-definite matrix.
Then the corresponding likelihood potential $\Phi(u;y)\ge 0$. Furthermore, $\Phi$
satisfies the conditions of Assumption
\ref{assumption-on-likelihood}(iv) with $f_2(x) = 1+ \tilde{f}(x)$ if there is
a positive, non-decreasing and locally bounded function $\tilde{f}:\reals_+ \mapsto
  \reals_+$ so that
\begin{enumerate}[(i)]
\item  $\exists C> 0$ for which
$
\|\mcl{G}(u)\|_2  \le C \tilde{f}( \| u\|_X), \forall u \in X.
$
\item $\forall r > 0, \exists K(r) > 0$ so that 
$
\| \mcl{G}(u_1) - \mcl{G}(u_2) \|_2 \le K(r) \| u_1 - u_2 \|_X
$ for
  all $u_1, u_2\in B_X(r)$.
\end{enumerate}
 \end{theorem}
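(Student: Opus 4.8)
The plan is to verify the two assertions in turn, both of which follow directly from the explicit form of the Gaussian likelihood in \eqref{gaussian-likelihood}, namely $\Phi(u;y) = \frac{1}{2}\| \mcl{G}(u) - y\|_{\pmb{\Sigma}}^2$. The first assertion is immediate: since $\pmb{\Sigma}$ is positive definite, $\|\cdot\|_{\pmb{\Sigma}}$ is a genuine norm on $\reals^m$, so the half-squared norm $\frac{1}{2}\|\mcl{G}(u) - y\|_{\pmb{\Sigma}}^2$ is non-negative for every $u\in X$ and $y\in Y$, whence $\Phi \ge 0$.

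For Assumption \ref{assumption-on-likelihood}(iv), I would fix $r>0$ and $y_1,y_2 \in B_Y(r)$, write $a := \mcl{G}(u) - y_1$ and $b := \mcl{G}(u) - y_2$, and use the difference-of-squares factorization $\| a\|_{\pmb{\Sigma}}^2 - \|b\|_{\pmb{\Sigma}}^2 = (\|a\|_{\pmb{\Sigma}} - \|b\|_{\pmb{\Sigma}})(\|a\|_{\pmb{\Sigma}} + \|b\|_{\pmb{\Sigma}})$. Combining the reverse triangle inequality $|\|a\|_{\pmb{\Sigma}} - \|b\|_{\pmb{\Sigma}}| \le \|a - b\|_{\pmb{\Sigma}} = \|y_1 - y_2\|_{\pmb{\Sigma}}$ with the ordinary triangle inequality for the sum $\|a\|_{\pmb{\Sigma}} + \|b\|_{\pmb{\Sigma}}$ gives
$$
|\Phi(u;y_1) - \Phi(u;y_2)| \le \frac{1}{2} \|y_1 - y_2\|_{\pmb{\Sigma}} \left( 2\|\mcl{G}(u)\|_{\pmb{\Sigma}} + \|y_1\|_{\pmb{\Sigma}} + \|y_2\|_{\pmb{\Sigma}}\right).
$$

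Next I would pass from the $\pmb{\Sigma}$-weighted norm to the Euclidean one. Setting $\beta := \| \Sigma^{-1/2}\|$ (the operator norm, finite because $\pmb{\Sigma}$ is positive definite), the definition $\|x\|_{\pmb{\Sigma}} = \|\Sigma^{-1/2} x\|_2$ yields $\|x\|_{\pmb{\Sigma}} \le \beta \|x\|_2$ for all $x\in\reals^m$. Applying this to each factor, invoking hypothesis (i) to bound $\|\mcl{G}(u)\|_2 \le C \tilde{f}(\|u\|_X)$ for every $u\in X$, and using $\|y_1\|_2,\|y_2\|_2 \le r$ together with $\|\cdot\|_2 = \|\cdot\|_Y$ on $Y=\reals^m$, I obtain
$$
|\Phi(u;y_1) - \Phi(u;y_2)| \le \beta^2 \left( C \tilde{f}(\|u\|_X) + r\right) \| y_1 - y_2\|_Y, \qquad \forall u \in X.
$$
Absorbing constants through $C\tilde{f}(\|u\|_X) + r \le \max\{C, r\}\,(1 + \tilde{f}(\|u\|_X))$ then recovers exactly Assumption \ref{assumption-on-likelihood}(iv) with $f_2(x) = 1 + \tilde{f}(x)$ and $C(r) = \beta^2 \max\{C, r\}$.

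I do not expect a genuine obstacle; the argument is a routine estimate. The only points needing care are (a) converting between the $\pmb{\Sigma}$-weighted and Euclidean norms, so that the positive-definiteness of $\pmb{\Sigma}$ is used exactly once through $\beta < \infty$, and (b) packaging the affine bound $C\tilde{f} + r$ into the prescribed shape $1 + \tilde{f}$. I would also remark that the Lipschitz hypothesis (ii) does not enter the verification of (iv) at all; rather, applying the same difference-of-squares manipulation to $\Phi(u_1;y) - \Phi(u_2;y)$ and invoking (ii) together with the local boundedness and monotonicity of $\tilde{f}$ yields the continuity-in-$u$ condition Assumption \ref{assumption-on-likelihood}(iii), so that hypotheses (i) and (ii) between them supply all the conditions on $\Phi$ needed for the stability results such as Corollary \ref{additive-noise-stability}.
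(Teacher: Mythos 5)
Your proposal is correct and follows essentially the same route as the paper: your difference-of-squares factorization combined with the reverse and ordinary triangle inequalities is just the paper's identity $\|a\|_{\pmb{\Sigma}}^2-\|b\|_{\pmb{\Sigma}}^2=\langle \pmb{\Sigma}^{-1/2}(a-b),\pmb{\Sigma}^{-1/2}(a+b)\rangle$ followed by Cauchy--Schwarz, and both yield the bound $C(r)\bigl(1+\tilde{f}(\|u\|_X)\bigr)\|y_1-y_2\|_Y$. Your closing remark is also consistent with the paper, whose proof additionally verifies Assumption \ref{assumption-on-likelihood}(ii) and (iii) from hypotheses (i) and (ii) by the same manipulation applied to $\Phi(u_1;y)-\Phi(u_2;y)$.
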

 \begin{proof}
Since we assumed that $\eta$ is Gaussian then the likelihood potential
is of the form \eqref{gaussian-likelihood}.
Then it is clear that $\Phi(u;y) \ge 0$ which immediately implies that $\Phi$
   satisfies Assumption \ref{assumption-on-likelihood}(i) with
   $M = 0$ and $f_1(x) = 0$. Now fix $r > 0$ and suppose that $u \in B_X(r)$ and $y
   \in B_Y(r)$. Define
   $\tilde{r} = \max\{ r,  C \tilde{f}(r) \}$ and note that $\tilde{r}$ is
   bounded since we assumed that $\tilde{f}$ is locally bounded. 
 Therefore, we have
$
\Phi(u;y) \le \| \mcl{G}(u) \|_{\pmb{\Sigma}}^2 + \|
y \|_{\pmb{\Sigma}}^2  \lesssim \tilde{r}^2
$ and so
 $\Phi$ satisfies Assumption \ref{assumption-on-likelihood}(ii).

Now we will show that $\Phi$ satisfies Assumption
\ref{assumption-on-likelihood}(iii) as well. Let $r$ and $\tilde{r}$
be defined as above and consider $u_1, u_2 \in B_X(r)$ and $y \in B_Y(r)$. Using the
identity
$\| a\|_2^2 - \| b\|_2^2 =  \langle a - b , a +b \rangle $ for $a, b
\in \reals^m$ and the conditions (i) and (ii) of
the theorem we obtain
$$
\begin{aligned}
  2|\Phi(u_1;y) - \Phi(u_2;y)| 
& = \left| \langle \pmb{\Sigma}^{-1/2} (\mcl{G}(u_1) - \mcl{G}(u_2)) ,
  \pmb{\Sigma}^{-1/2} ( \mcl{G}(u_1) + \mcl{G}(u_2) - 2 y )  \rangle
\right| \\
& \le  \left(\| \mcl{G}(u_1) \|_{\pmb{\Sigma}} +\|
  \mcl{G}(u_2) \|_{\pmb{\Sigma}}
+ 2 \|y \|^2_{\pmb{\Sigma}} \right) \| (\mcl{G}(u_1) -
\mcl{G}(u_2)) \|_{\pmb{\Sigma}}  \\
&\le C(\tilde{r}) \| (\mcl{G}(u_1) - \mcl{G}(u_2) )
\|_{\pmb{\Sigma}} \le 2K(r) \| u_1 - u_2 \|_X.
\end{aligned}
$$

Finally, fix $r > 0$ and consider $y_1, y_2 \in B_Y(r)$. Then using the same line of reasoning as 
above, for any $u \in X$ we can write
$$
\begin{aligned}
2| \Phi( u;y_1)  - \Phi(u;y_2) | 
& =\left| \langle \pmb{\Sigma}^{-1/2} (y_2 - y_1) ,
  \pmb{\Sigma}^{-1/2} ( 2\mcl{G}(u) -  y_1 - y_2 )  \rangle
\right| \\
 & \le \left( \| y_2 \|_{\pmb{\Sigma}} - \| y_1 \|_{\pmb{\Sigma}} +
   2 \| \mcl{G}(u) \|_{\pmb{\Sigma}} \right) \| (y_2
   - y_1)\|_{\pmb{\Sigma}}  \\
& \le C(r) (1 + \tilde{f} (\| u\|_X)) \| y_1 - y_2 \|_2.
\end{aligned}
$$
 \end{proof}

By putting this result together with Theorem~\ref{g-moment-of-ID-measure} and Corollary~\ref{additive-noise-stability} we deduce the following 
corollary concerning the well-posedness of Bayesian inverse problems with ID priors. 

\begin{corollary}\label{well-posed-linear-inverse-problem-with-ID-prior}
  Let $X$ be a Banach space and $Y = \reals^m$. Consider the additive
  noise model:
$$
y = \mcl{G}(u) + \eta, \qquad \eta \sim \mcl{N}(0, \pmb{\Sigma})
$$
where $\pmb{\Sigma}$ is a positive definite matrix and $\mcl{G}:X \mapsto
\reals^m$ \myhl{satisfies 
the conditions of Theorem \ref{assumption-on-forward-map} with a
submultiplicative function $\tilde{f}$}. Also, suppose that $\mu_0= \text{ID}( m, \mcl{R}, \lambda)$ where
$\lambda$ is a L{\'e}vy measure such that $\lambda(X) < \infty$,
$\mu_0(X) = 1$ and $\| \cdot \|_X <\infty$ $\mu_0$-a.s. \myhl{Then the Bayesian inverse
problem \eqref{bayes-rule} is well-posed if  $1 + \tilde{f}(\| \cdot \|_X) \in L^1(X, \lambda)$.}
\end{corollary}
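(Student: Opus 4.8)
The plan is to assemble the corollary from three ingredients already in hand: Theorem~\ref{assumption-on-forward-map} (to verify the structural hypotheses on $\Phi$), Theorem~\ref{existence-uniqueness} (existence and uniqueness), and Corollary~\ref{additive-noise-stability} together with Theorem~\ref{g-moment-of-ID-measure} (stability, via an integrability estimate transferred from the L\'evy measure $\lambda$ to the prior $\mu_0$). The proof is essentially a concatenation of these results.

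First I would invoke Theorem~\ref{assumption-on-forward-map}. Since $\mcl{G}$ satisfies its conditions (i)--(ii) with the submultiplicative function $\tilde{f}$ and $\eta \sim \mcl{N}(0,\pmb{\Sigma})$, the likelihood potential is $\Phi(u;y) = \frac{1}{2}\|\mcl{G}(u) - y\|_{\pmb{\Sigma}}^2 \ge 0$ and $\Phi$ satisfies Assumption~\ref{assumption-on-likelihood}(ii), (iii) and (iv) with $f_2(x) = 1 + \tilde{f}(x)$. Because $\Phi \ge 0$, Assumption~\ref{assumption-on-likelihood}(i) holds with $M = 0$ and $f_1 \equiv 1$. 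Existence and uniqueness of $\mu^y$ then follow from Theorem~\ref{existence-uniqueness}, whose hypothesis $f_1(\|\cdot\|_X) \in L^1(X,\mu_0)$ reduces to $\mu_0(X) = 1 < \infty$ and is therefore automatic.

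The substance of the argument is stability. For this I would apply Corollary~\ref{additive-noise-stability}(i), which yields a Lipschitz bound $d_{TV}(\mu^y,\mu^{y'}) \le C(r)\|y-y'\|_Y$ on each ball $B_Y(r)$ provided $f_2(\|\cdot\|_X) = 1 + \tilde{f}(\|\cdot\|_X) \in L^1(X,\mu_0)$. To establish this integrability I would use Theorem~\ref{g-moment-of-ID-measure}, whose key requirement is that the test function be submultiplicative. The observation is that $1 + \tilde{f}$ is itself submultiplicative: the constant function $1$ is submultiplicative, $\tilde{f}$ is submultiplicative by hypothesis, and a direct estimate using non-negativity shows the sum of two submultiplicative functions (with constants $C_1,C_2$) is again submultiplicative with constant $\max\{C_1,C_2\}$, since the product on the right only gains non-negative cross terms. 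With $h = 1 + \tilde{f}$ submultiplicative and the standing hypotheses $\mu_0 = \text{ID}(m,\mcl{R},\lambda)$, $\lambda(X) < \infty$, $\mu_0(X) = 1$ and $\|\cdot\|_X < \infty$ $\mu_0$-a.s., Theorem~\ref{g-moment-of-ID-measure} transfers the assumed $1 + \tilde{f}(\|\cdot\|_X) \in L^1(X,\lambda)$ into $1 + \tilde{f}(\|\cdot\|_X) \in L^1(X,\mu_0)$, exactly the condition needed. Combining this total variation stability with the existence and uniqueness above gives well-posedness in the sense of Definition~\ref{def-existence-uniqueness} with respect to $d_{TV}$.

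The one place needing care, and the main (if minor) obstacle, is the degenerate case $\lambda(X) = 0$, which is excluded by the strict inequality $0 < \lambda(X)$ in Theorem~\ref{g-moment-of-ID-measure}. In that case the prior collapses to the shifted Gaussian $\delta_m \ast \mcl{N}(0,\mcl{R})$, and I would instead obtain $1 + \tilde{f}(\|\cdot\|_X) \in L^1(X,\mu_0)$ directly from Fernique's theorem, using that every submultiplicative function is dominated by $b\exp(a|\cdot|)$ for some $a,b > 0$ (the same bound exploited inside the proof of Theorem~\ref{g-moment-of-ID-measure}). Apart from this bookkeeping and the elementary verification that $1 + \tilde{f}$ is submultiplicative, no new estimates are required.
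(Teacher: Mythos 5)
Your proposal is correct and follows essentially the same route as the paper, which derives the corollary by combining Theorem~\ref{assumption-on-forward-map}, Theorem~\ref{g-moment-of-ID-measure} and Corollary~\ref{additive-noise-stability} exactly as you do. Your added checks (that $1+\tilde{f}$ is itself submultiplicative, and the degenerate case $\lambda(X)=0$ handled via Fernique) are details the paper leaves implicit, but they do not change the argument.
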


\subsection{The case of linear inverse problems}\label{sec:linear-inverse-problems}
 We now assume that the likelihood potential $\Phi$ has the form 
$$
\Phi(u;y) : X \times \reals^m \mapsto \reals, \qquad \Phi(u;y) = \frac{1}{2} \left\| \mcl{G}(u) - y \right\|_{\pmb{\Sigma}}^2 
$$
where $\pmb{\Sigma}$ is a positive definite matrix and $\mcl{G}:X \mapsto \reals^m$ is
bounded and linear.
This case is of particular importance due to its 
occurrence in the Compressed Sensing literature \cite{foucart} and
estimation of sparse parameters.
In this case, we can further relax our conditions on the prior measure
$\mu_0$ and achieve well-posedness 
so long as the prior $\mu_0$ has bounded first moment.
\begin{corollary}\label{linear-inverse-problems-well-posedness}
Let $X$ be a Banach space and $Y = \reals^m$. Suppose that the forward
map $\mcl{G}: X \mapsto \reals^m$ is bounded and linear and consider the
additive noise model 
$$
y = \mcl{G}(u) + \eta \qquad where \qquad \eta \sim \mcl{N}(0,
\pmb{\Sigma}) \qquad \text{and} \qquad \text{$\pmb{\Sigma}$ is positive definite}.
$$
Then the Bayesian
inverse problem of identifying the posterior $\mu^y$ via
\eqref{bayes-rule} is well-posed 
in both the Hellinger and total
variation metrics if the prior $\mu_0$ is a \myhl{Borel} probability measure
on $X$ and
$\| \cdot \|_X \in L^1(X, \mu_0)$.
\end{corollary}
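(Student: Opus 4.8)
The plan is to reduce the statement to the additive-Gaussian machinery already assembled and then to transfer stability between the two metrics via the equivalence \eqref{tv-hellinger-equivalence}. First I would record that, under the stated model, the likelihood potential takes the form \eqref{gaussian-likelihood}, namely $\Phi(u;y) = \tfrac{1}{2}\|\mcl{G}(u) - y\|_{\pmb{\Sigma}}^2 \ge 0$. Since $\mcl{G}$ is bounded and linear, both hypotheses of Theorem~\ref{assumption-on-forward-map} hold with the choice $\tilde{f}(x) = \max\{1, x\}$: condition (i) is the operator-norm bound $\|\mcl{G}(u)\|_2 \le \|\mcl{G}\|\,\|u\|_X \le \|\mcl{G}\|\,\tilde{f}(\|u\|_X)$, and condition (ii) follows from $\mcl{G}(u_1) - \mcl{G}(u_2) = \mcl{G}(u_1 - u_2)$, which gives the (global) Lipschitz bound with $K(r) = \|\mcl{G}\|$. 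Theorem~\ref{assumption-on-forward-map}, whose proof in fact also verifies Assumptions~\ref{assumption-on-likelihood}(ii) and (iii), then yields $\Phi \ge 0$ together with Assumption~\ref{assumption-on-likelihood}(iv) for $f_2(x) = 1 + \tilde{f}(x)$.

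Existence and uniqueness follow directly from Theorem~\ref{existence-uniqueness}. Because $\Phi \ge 0$, Assumption~\ref{assumption-on-likelihood}(i) holds with $M = 0$ and $f_1 \equiv 1$, and the integrability requirement $f_1(\|\cdot\|_X) \in L^1(X, \mu_0)$ is automatic since $\mu_0$ is a probability measure. Hence the posterior $\mu^y \ll \mu_0$ is a well-defined Borel probability measure.

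For total variation stability I would invoke Corollary~\ref{additive-noise-stability}(i): its hypotheses, $\Phi \ge 0$ satisfying Assumptions~\ref{assumption-on-likelihood}(ii) and (iv), are exactly what the first paragraph supplies, and its integrability condition is met because $f_2(\|u\|_X) = 1 + \max\{1, \|u\|_X\} \le 2 + \|u\|_X$, so $f_2(\|\cdot\|_X) \in L^1(X, \mu_0)$ by the assumed first-moment bound $\|\cdot\|_X \in L^1(X, \mu_0)$. This gives the local Lipschitz estimate $d_{TV}(\mu^y, \mu^{y'}) \le C(r)\,\|y - y'\|_Y$ on each ball $B_Y(r)$, which is the stability clause of Definition~\ref{def-existence-uniqueness} in total variation.

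The main obstacle is the Hellinger claim under only a first-moment hypothesis, since the direct route through Corollary~\ref{additive-noise-stability}(ii) would demand $f_2(\|\cdot\|_X) \in L^2(X, \mu_0)$, i.e.\ a bounded second moment, which is not assumed. I would sidestep this using the metric equivalence \eqref{tv-hellinger-equivalence}: from $2 d_H^2 \le d_{TV}$ one gets $d_H(\mu^y, \mu^{y'}) \le \bigl(\tfrac{1}{2} C(r)\bigr)^{1/2} \|y - y'\|_Y^{1/2}$ on $B_Y(r)$. The resulting $\tfrac{1}{2}$-Hölder dependence still forces $d_H \to 0$ as $\|y - y'\|_Y \to 0$, so the continuity requirement of Definition~\ref{def-existence-uniqueness} is satisfied in the Hellinger metric as well; only the convergence rate, not well-posedness, degrades. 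This is exactly the transfer phenomenon noted in the discussion preceding the theorems of this section.
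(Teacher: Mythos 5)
Your proposal is correct and follows essentially the same route as the paper, whose proof is a one-line citation of Theorem~\ref{assumption-on-forward-map}, Theorem~\ref{existence-uniqueness}, and Corollary~\ref{additive-noise-stability}(i); you have simply filled in the verification of the hypotheses (with $\tilde{f}(x)=\max\{1,x\}$) and made explicit the transfer to the Hellinger metric via \eqref{tv-hellinger-equivalence}, which the paper handles by its general remark that TV well-posedness implies Hellinger well-posedness at a degraded rate.
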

\begin{proof}
 \myhl{Follows directly from
Theorems~\ref{assumption-on-forward-map} and
\ref{existence-uniqueness} and Corollary
\ref{additive-noise-stability}(i).}
\end{proof}

Let us now return to the product priors of Section
\ref{sec:priors-for-compressibility} and show that those measures result
in well-posed Bayesian inverse problems under the linear and additive
noise assumptions.

\begin{theorem}\label{well-posedness-product-prior}
Let $X$ be a Banach space with an unconditional Schauder basis $\{
x_k\}$ and take  $Y = \reals^m$. Suppose that the measurement 
noise is additive and Gaussian and the forward map $\mcl{G}$ is
bounded and linear. Furthermore, suppose that $\mu_0$ is a product
prior with sample paths 
$
u = \sum_{k=1}^\infty \gamma_k \xi_k x_k
$
where $\{\gamma_k\} \in \ell^2$ and $\{ \xi_k\}$ are i.i.d. and $\VV
\xi_k< \infty$. Then the 
 inverse problem
\eqref{bayes-rule} is well-posed in both the total variation and
Hellinger metrics.
\end{theorem}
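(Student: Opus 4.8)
The plan is to recognize this theorem as a direct specialization of Corollary~\ref{linear-inverse-problems-well-posedness} to the product-prior setting, so that the only genuine work is to verify the two hypotheses of that corollary: that $\mu_0$ is a Borel probability measure on $X$ and that $\| \cdot \|_X \in L^1(X, \mu_0)$. First I would observe that, since $\{ \gamma_k \} \in \ell^2$ and the $\{\xi_k\}$ are i.i.d.\ with $\VV \xi_k < \infty$, Theorem~\ref{product-prior-ellp} guarantees that the series $\sum_k \gamma_k \xi_k x_k$ converges $\mu_0$-a.s.\ in $X$. Hence $u$ is a well-defined $X$-valued random variable and its law $\mu_0$ is a Borel probability measure, which settles the first hypothesis.

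For the integrability requirement I would invoke Theorem~\ref{bounded-moments-product-prior}, whose hypotheses ($\{\gamma_k\} \in \ell^2$, i.i.d.\ $\xi_k$ with finite variance) are exactly those assumed here, to conclude that $\|\cdot\|_X \in L^2(X, \mu_0)$. Because $\mu_0$ is a probability measure, $L^2(X,\mu_0) \subseteq L^1(X,\mu_0)$, so $\|\cdot\|_X \in L^1(X,\mu_0)$ as well. This already supplies the moment condition needed by Corollary~\ref{linear-inverse-problems-well-posedness}, and the stronger $L^2$ membership is what ultimately secures the Hellinger estimate rather than only the total variation one.

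Finally, I would trace how the linear, bounded forward map enters. Writing $\Phi(u;y) = \tfrac12 \| \mcl{G}(u) - y \|_{\pmb{\Sigma}}^2$, boundedness of $\mcl{G}$ gives $\|\mcl{G}(u)\|_2 \le \|\mcl{G}\|\,\|u\|_X$ and $\|\mcl{G}(u_1) - \mcl{G}(u_2)\|_2 \le \|\mcl{G}\|\,\|u_1 - u_2\|_X$, so Theorem~\ref{assumption-on-forward-map} applies with the (sub)linear choice $\tilde f(x) = \|\mcl{G}\|\,x$ and hence $f_2(x) = 1 + \tilde f(x)$; since $\eta$ is Gaussian we also have $\Phi \ge 0$, yielding Assumption~\ref{assumption-on-likelihood}(i) with $M = 0$ and $f_1 \equiv 1$. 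With $f_1 \equiv 1$ the requirements reduce to $f_2(\|\cdot\|_X) \in L^1(X,\mu_0)$ for total variation and $f_2(\|\cdot\|_X) \in L^2(X,\mu_0)$ for Hellinger; both hold because $f_2(\|u\|_X)$ grows only linearly in $\|u\|_X$ and $\|\cdot\|_X \in L^2(X,\mu_0)$. Existence and uniqueness then follow from Theorem~\ref{existence-uniqueness} and stability in both metrics from Corollary~\ref{additive-noise-stability}, so the claim follows via Corollary~\ref{linear-inverse-problems-well-posedness}. I expect no serious obstacle: the argument is essentially a bookkeeping exercise in matching hypotheses, and the one substantive ingredient, the $L^2$ second-moment bound for the product prior, is already delivered by Theorem~\ref{bounded-moments-product-prior}.
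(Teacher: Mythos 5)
Your proposal is correct and follows essentially the same route as the paper: the paper likewise establishes $\|\cdot\|_X \in L^2(X,\mu_0)$ via Theorem~\ref{bounded-moments-product-prior} and then concludes by combining Theorem~\ref{assumption-on-forward-map} with Corollary~\ref{additive-noise-stability}. The only cosmetic difference is that the paper also records the Radon property of $\mu_0$ via Theorem~\ref{product-prior-is-radon}, which is not actually needed since the well-posedness machinery only requires a Borel prior, as you correctly exploit.
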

\begin{proof}
    The fact that $\mu_0$ is a Radon probability measures on $X$ follows from Theorem
  \ref{product-prior-is-radon} and Theorem~\ref{product-prior-ellp}. Now if
  $\VV \xi_k < \infty$ then  $\EE \xi_k^2<\infty$ as well and so it follows from Theorem
\ref{bounded-moments-product-prior} that $\| \cdot \|_X \in L^2(X, \mu)$.
Then the assertion follows from Theorems
\ref{additive-noise-stability} and \ref{assumption-on-forward-map}. 
\end{proof}

Finally we turn our attention to the 
the  $G_{p,q}$-priors of Section \ref{sec:non-gaussian-priors}. 
The proof of the following corollary follows directly from Theorem
\ref{bounded-moments-product-prior} and the fact
that the $G_{p,q}$ distributions in 1D have bounded variance for $0< p,q \le 1$.
\begin{corollary}\label{well-posedness-ellp-prior}
Let $X$ be a Banach space with an unconditional Schauder basis $\{
x_k\}$ and $Y = \reals^m$. Suppose that the measurement
noise is additive and Gaussian and that the forward map $\mcl{G}$ is
bounded and linear. Then the Bayesian inverse problem
\eqref{bayes-rule} is well-posed in both the Hellinger and total
variation metrics if $\mu_0$ is an \myhl{$G_{p,q}$-prior with $0<p,q<1$.}
\end{corollary}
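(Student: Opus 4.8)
The plan is to recognize the $G_{p,q}$-prior as a particular instance of the product priors treated in Theorem~\ref{well-posedness-product-prior} and then merely verify the single integrability hypothesis that the earlier machinery requires. By definition, a $G_{p,q}$-prior $\mu_0$ is a Radon probability measure whose samples are $u = \sum_{k=1}^\infty \gamma_k \xi_k x_k$ with $\{\gamma_k\} \in \ell^2$ and an i.i.d.\ sequence $\{\xi_k\}$ whose common law has the Lebesgue density \eqref{Cpq-distribution}. Since $Y = \reals^m$, the noise is additive Gaussian, and $\mcl{G}$ is bounded and linear, the hypotheses on the forward model are exactly those of Corollary~\ref{linear-inverse-problems-well-posedness}, so the only thing I would need to supply is a suitable moment bound on the prior.

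The key input is finite variance of the one-dimensional marginals, which has already been recorded after the definition of the $G_{p,q}$-prior: with the normalizing choice $\alpha = (\Gamma(q/p)/\Gamma((2+q)/p))^{1/2}$ one computes
$$
\VV \xi_1 = \frac{\alpha^2\, \Gamma\!\left((2+q)/p\right)}{\Gamma\!\left(q/p\right)} = 1.
$$
Because $\Gamma$ is positive and finite on $(0,\infty)$ and the arguments $(2+q)/p$ and $q/p$ are strictly positive whenever $p,q>0$, this variance is finite (indeed equal to one) on the whole range $p,q>0$, and in particular on the stated regime $0<p,q<1$.

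With $\{\gamma_k\} \in \ell^2$ and $\VV \xi_k = 1 < \infty$ in hand, Theorem~\ref{bounded-moments-product-prior} gives $\|\cdot\|_X \in L^2(X,\mu_0)$; since $\mu_0$ is a probability measure this in turn yields $\|\cdot\|_X \in L^1(X,\mu_0)$. Feeding this integrability into Corollary~\ref{linear-inverse-problems-well-posedness} produces well-posedness of \eqref{bayes-rule} in both the total variation and Hellinger metrics, which is the claim. Equivalently, one may simply invoke Theorem~\ref{well-posedness-product-prior} with these $\{\gamma_k\}$ and $\{\xi_k\}$. There is no genuine obstacle here: the entire argument is a chaining of already-established results, and the only check is the Gamma-ratio bookkeeping for $\VV\xi_1$, which is automatic. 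I would note that the restriction $0<p,q<1$ plays no role in the well-posedness itself and merely singles out the heavy-tailed, non-convex regime of interest.
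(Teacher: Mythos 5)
Your proposal is correct and follows essentially the same route as the paper, which likewise reduces the corollary to the finiteness of $\VV\xi_1$ (guaranteed by the normalization of $\alpha$), Theorem~\ref{bounded-moments-product-prior}, and the linear/additive-Gaussian well-posedness machinery. Your side remark that the restriction $0<p,q<1$ is not needed for the variance computation is also accurate; the paper only uses that range to single out the heavy-tailed, non-convex regime.
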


\section{Practical considerations and examples}\label{sec:examples}

We now turn our attention to practical aspects of solving an inverse
problem within the Bayesian framework. In the first part of this
section we discuss the problem of approximating the posterior measure 
via approximation of the likelihood potential. Afterwards, we will
present three concrete examples of Bayesian inverse problems with
heavy-tailed priors that
arise from practical problems in image deblurring and ultrasound therapy. 

\subsection{Consistent approximation of the posterior}
Up to this point we were concerned with identifying 
prior measures $\mu_0$ that result in a well-posed Bayesian inverse
problem
for a given likelihood potential $\Phi$. However, in practice we
cannot solve the inverse problem directly on the infinite-dimensional
Banach space. Therefore, we need to obtain a finite dimensional approximation
to the posterior measure $\mu^y$ which is, in some sense, consistent with the infinite
dimensional limit. 

To this end,
we will define the notion of {\it consistent approximation} of a
Bayesian inverse 
problem in the context of 
applications where one would discretize \eqref{bayes-rule} by
approximating 
the likelihood potential $\Phi$
with a discretized version $\Phi_N: X \times Y \mapsto \reals$, akin to a finite element
discretization. We define the approximation $\mu^y_N$ to 
$\mu^y$ via 
\begin{equation}\label{discretized-bayes-rule}
\frac{\dd \mu^y_N}{\dd \mu_0} = \frac{1}{Z_N(y)} \exp( - \Phi_N(u; y))
\qquad \text{where} \qquad Z_N(y) = \int_{X} \exp(- \Phi_N(u;y) )\dd \mu_0(u). 
\end{equation}

\begin{definition}[Consistent approximation{\cite{hosseini-convex-prior}}] \label{def-stability}
  The approximate Bayesian inverse problem
  \eqref{discretized-bayes-rule} is a consistent approximation to \eqref{bayes-rule} for a choice of $\mu_0$, 
$\Phi$ and $\Phi_N$ if $d( \mu^y, \mu^y_N ) \mapsto 0$ as $| \Phi(u;y) -
\Phi_N(u;y) | \mapsto 0$. Here, $d$ is either the total variation or the
Hellinger metric.
\end{definition}

This notion of a consistent approximation relates directly
to practical applications. Suppose, for example, that we are
interested in computing the expected value of a quantity $h(u)$ 
under the posterior $\mu^y$ but we can only compute the expectation
under the approximation $\mu^y_N$. If $\mu^y_N$ is a consistent
approximation in the Hellinger metric then we have, by the bound \eqref{hellinger-metric-bounds-expectation}, that if $h \in
L^2(X, \mu^y) \cap L^2(X, \mu^y_N)$ then
$$
\left| \int_X h(u) \dd \mu^y(u) - \int_X h(u) \dd \mu^y_N(u) \right|
\le C d_{H} (\mu^y, \mu^y_N).
$$ 
 In what follows, we will provide sharper bounds on the
rate of convergence of the distances between $\mu^y$ and $\mu^y_N$
under mild conditions. 
\begin{theorem}\label{stability}
  Assume that the measures $\mu^y$ and $\mu^y_N$ are defined via
  \eqref{bayes-rule} and \eqref{discretized-bayes-rule}, for a fixed
  $y \in Y$ and all values of $N$, and are absolutely continuous 
with respect to the prior $\mu_0$ which is a \myhl{Borel} probability measure
on $X$. Also 
assume that both $\Phi$ and $\Phi_N$ satisfy Assumptions
\ref{assumption-on-likelihood}(i) and (ii) with an appropriate function
$f_1$, uniformly for all $N$ and that there exists a positive and
non-decreasing function $f_3:\reals_+ \mapsto \reals_+$ so that 
\begin{equation} \label{discretization-assumption}
| \Phi(u;y) - \Phi_N(u;y) | \le f_3(\| u \|_X) \rho(N)
\end{equation}
where $\rho(N) \mapsto 0$ as $N \mapsto \infty$.
\begin{enumerate}[(i)]
\item If $
f_3(\|\cdot\|_X)  f_1(  \| \cdot \|_X) \in L^1(X, \mu_0)$ then there exists a constant $D >0$,
  independent of $N$ such that
$
d_{\text{TV}}( \mu^y , \mu^y_N) \le D \rho(N).
$
\item If $(f_3(\|\cdot\|_X))^2 f_1(  \| \cdot \|_X) \in L^1(X, \mu_0)$
  then there exists a constant $D' > 0$, independent of $N$ such that
$
d_H( \mu^y , \mu^y_N) \le D' \rho(N).
$
\end{enumerate}
\end{theorem}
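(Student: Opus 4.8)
The plan is to reuse the mechanism of the proof of Theorem~\ref{stability-TV}, with the data perturbation $y\mapsto y'$ there replaced by the potential perturbation $\Phi\mapsto\Phi_N$ here, and with the role of Assumption~\ref{assumption-on-likelihood}(iv) taken over by the discretization estimate \eqref{discretization-assumption}. I would prove (i) in detail and obtain (ii) by the analogous square-root computation for the Hellinger metric, exactly as in \cite[Sec.~4.1]{stuart-bayesian-lecture-notes}. The fixed datum $y$ lies in $B_Y(r)$ for any $r\ge\|y\|_Y$, so all the $r$-dependent constants below may be fixed once and for all.

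The first step is to control the normalizing constants uniformly in $N$. Since $\mu^y$ and $\mu^y_N$ are assumed well-defined and $\mu_0$-absolutely continuous, we already have $0<Z(y),Z_N(y)<\infty$. More importantly, because $\Phi$ and $\Phi_N$ satisfy Assumption~\ref{assumption-on-likelihood}(ii) with the \emph{same} constant $K(R)$ for all $N$, the pigeonhole argument from the proof of Theorem~\ref{existence-uniqueness} yields, for $R$ large enough, $Z_N(y)\ge \exp(-K(R))\,\mu_0(B_X(R))=:c>0$ with $c$ independent of $N$. Using next that both potentials obey Assumption~\ref{assumption-on-likelihood}(i) with the same $f_1$, so that $\min\{\Phi,\Phi_N\}\ge M-\log f_1(\|u\|_X)$, the mean value theorem applied to the exponential together with \eqref{discretization-assumption} gives
\[
|Z(y)-Z_N(y)| \le \exp(-M)\,\rho(N)\int_X f_1(\|u\|_X)\,f_3(\|u\|_X)\,\dd\mu_0(u) \lesssim \rho(N),
\]
where finiteness of the integral is precisely the hypothesis $f_1(\|\cdot\|_X)f_3(\|\cdot\|_X)\in L^1(X,\mu_0)$ of part (i).

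With this in hand I would split the total variation distance exactly as in Theorem~\ref{stability-TV}: adding and subtracting $Z_N(y)^{-1}\exp(-\Phi)$ gives $2d_{TV}(\mu^y,\mu^y_N)=I_1+I_2$, where $I_1=|Z(y)^{-1}-Z_N(y)^{-1}|\,Z(y)=|Z(y)-Z_N(y)|/Z_N(y)\lesssim\rho(N)$ by the normalizer bound and the uniform lower bound $Z_N(y)\ge c$, and where $Z_N(y)\,I_2=\int_X|\exp(-\Phi)-\exp(-\Phi_N)|\,\dd\mu_0$ is governed by the same integral estimate, so that $I_2\lesssim\rho(N)$. This establishes (i) with a constant $D$ independent of $N$. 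For part (ii) I would instead expand $2d_H^2(\mu^y,\mu^y_N)=\int_X(Z(y)^{-1/2}\exp(-\Phi/2)-Z_N(y)^{-1/2}\exp(-\Phi_N/2))^2\,\dd\mu_0$, add and subtract $Z_N(y)^{-1/2}\exp(-\Phi/2)$, and use $(a+b)^2\le 2a^2+2b^2$. The first term is $\lesssim(Z(y)^{-1/2}-Z_N(y)^{-1/2})^2\lesssim\rho(N)^2$ since $z\mapsto z^{-1/2}$ is Lipschitz on $[c,\infty)$; the second is bounded via $(\exp(-\Phi/2)-\exp(-\Phi_N/2))^2\le\tfrac14\exp(-\min\{\Phi,\Phi_N\})\,|\Phi-\Phi_N|^2$, which brings in $\int_X f_1(\|u\|_X)(f_3(\|u\|_X))^2\,\dd\mu_0$, finite precisely under the stronger hypothesis of (ii), and yields $d_H\lesssim\rho(N)$.

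The individual estimates are routine and mirror those of Theorem~\ref{stability-TV}; the one genuinely new point, and the main obstacle, is uniformity in $N$. Everything hinges on the lower bound $Z_N(y)\ge c$ and on the integrability constants being independent of $N$, which is exactly what the hypothesis that Assumptions~\ref{assumption-on-likelihood}(i) and (ii) hold with a single $f_1$ and single constants $M,K$ for all $N$ simultaneously provides. Without this uniformity the constants $D$ and $D'$ could degenerate as $N\mapsto\infty$, so the care lies in threading that uniformity through each of the bounds above rather than in any single estimate.
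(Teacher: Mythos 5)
Your proposal is correct and follows essentially the same route as the paper: the same normalizing-constant estimate via the mean value theorem and Assumption~\ref{assumption-on-likelihood}(i) combined with \eqref{discretization-assumption}, the same $I_1+I_2$ splitting of the total variation distance borrowed from Theorem~\ref{stability-TV}, and the standard Hellinger computation for part (ii) (which the paper delegates to \cite{stuart-bayesian-lecture-notes}). Your explicit attention to the uniformity in $N$ of the lower bound on $Z_N(y)$ is a point the paper leaves implicit, but it is the same argument.
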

\begin{proof}
Our method of proof uses similar arguments as in 
  Theorem \ref{stability-TV}
 and so we will only present it briefly for the
total variation distance. \myhl{Proof of part (ii) can also be found in \cite{stuart-bayesian-lecture-notes} for separable Banach spaces.}  The existence and uniqueness of the
measures $\mu^y$ and $\mu^y_N$ follows from Theorem
\ref{existence-uniqueness} for all values of $N$.  Next, the mean value theorem,
 Assumption \ref{assumption-on-likelihood}(i),
\eqref{discretization-assumption} and the assumption that 
$f_3(\|\cdot\|_X) f_1(\| \cdot \|_X)$ is $\mu_0$-integrable give
$$
\begin{aligned}
| Z(y) - Z_N(y) | &\le  \int_X \exp( -\Phi(u;y) ) | \Phi(u;y) - \Phi_N(u;y) | \dd \mu_0(u)  \\
&\le \left( \int_X \exp(\log(f_1( \| u \|_X) ) - M) C f_3(\| u
  \|_X) \dd \mu_0(u) \right) \rho(N)
\lesssim \rho(N).
\end{aligned}
$$
Furthermore, we have
$$
\begin{aligned}
  2 d_{TV} ( \mu^y, \mu^{y'})  
& \le \int_X \left| Z(y)^{-1} \exp( -
  \Phi(u;y) ) - Z_N(y)^{-1} \exp( - \Phi(u, y)) \right| \dd \mu_0(u) \\
& \quad + 
Z_N(y)^{-1} \int_X \left|\exp( -
  \Phi(u;y) ) - \exp( - \Phi_N(u, y)) \right| \dd \mu_0(u) =: I_1 + I_2.
\end{aligned}
$$
It then follows in a similar manner to proof of Theorem \ref{stability-TV}
 that $I_1 \lesssim \rho(N)$ 
and $I_2 \lesssim \rho(N)$ which gives the desired result.
\end{proof}

We now consider a more specific setting where
 the prior measure $\mu_0$ has
a product structure.
Suppose that the
likelihood
potential $\Phi$ satisfies the Assumption
\ref{assumption-on-likelihood} with some functions $f_1, f_2$. 
Also, assume that 
the space $X$ has an unconditional
Schauder basis $\{x_k\}$
and  consider the sequence of
spaces $(X_N, \| \cdot \|_X)$ where $X_N = \text{span} \{ x_k
\}_{k=1}^N$. These are linear subspaces of $X$ and for each $N \in
\integers$ we have $X = X_N \oplus X_N^\bot $, meaning that 
every $u \in X$ can be written as $u = u_N + u_N^\bot$ where $u_N \in
X_N$ and $u_N^\bot \in X_N^\bot $. 

Suppose that the
prior measure $\mu_0$ has the product structure of
\eqref{product-prior-sample} and assume that it has sufficiently fast
decaying tails so that the posterior measure $\mu^y$ is well-defined.  Observe that
for every value of $N$ the product prior can be factored as 
\begin{equation}\label{prior-product-decomposition}
\mu_0 = \mu_N \otimes \mu_N^\bot
\end{equation}
where $\mu_N$ and $\mu_N^\bot$ are Radon measures on $X_N$ and
$X_N^\bot$. It is natural for us to discretize the potential $\Phi$ using a
projection operator:
\begin{equation}
\label{discrete-likelihood}
\Phi_N( u;y) := \Phi(P_Nu;y)
\end{equation}
where $P_N: X \mapsto X_N$ is defined by $P_N(u) =
u_N$. Next, define the approximate posterior measures $\mu^y_N$ as
in \eqref{discretized-bayes-rule} using the above definition of
$\Phi_N$. Under these assumptions, the
$\mu^y_N$ will factor as (see
\cite[Sec.~4.1]{hosseini-convex-prior} for the details)
\begin{equation}\label{posterior-product-decomposition}
\mu^y_N = \nu_N \otimes \mu_N^\bot,
\end{equation}
where $$
\frac{\dd\nu_N}{\dd \mu_N} = \frac{1}{Z_N(y)} \exp( - \Phi(P_Nu;y)).
$$
In other words, the likelihood potential is only informative on the
subspace $X_N$ and so by comparing \eqref{prior-product-decomposition} and
\eqref{posterior-product-decomposition} we see that the approximate posterior $\mu^y_N$ differs from
the prior only on this subspace and it is identical to the prior on
$X_N^\bot$. As an example, we now check whether this method for
discretization of the posterior results in a consistent approximation
to $\mu^y$ in the additive Gaussian noise case.

\begin{theorem}\label{convergence-of-discrete-likelihood}
 Consider the above setting where the posterior and the prior have the
 prescribed 
 product structures and the $X_N$ are linear subpaces of $X$. Suppose
 that $\Phi$and 
 $\Phi_N$ are given by 
$$
\Phi(u;y) = \frac{1}{2} \| \mcl{G}(u) - y \|_2^2, \qquad \Phi_N(u;y) = \frac{1}{2} \| \mcl{G}(P_Nu) - y \|_2^2
$$
where $P_N: X \mapsto X_N$ is the projection operator that was defined
before.
Assume that the following conditions are satisfied:

\begin{enumerate}[\hspace{1ex} (a)]
\item 
$\| u - P_Nu \|_X \le \| u\|_X \rho (N).$

\item $\|\mcl{G}(u)\|_2  \le C \tilde{f}_1( \| u\|_X) \qquad \forall u \in X.$

\item 
$\| \mcl{G}(u_1) - \mcl{G}(u_2) \|_2 \le  \tilde{f}_2( \max\{
(\|u_1\|_X, \| u_2 \|_X\}) \| u_1 - u_2 \|_X \qquad \forall u_1,u_2
\in X.$ 
\end{enumerate}
Here $\rho$ is a \myhl{positive function} such that $\rho(N) \mapsto 0$ as $N \mapsto \infty$ and the functions $\tilde{f}_1, \tilde{f}_2: \reals_+ \mapsto
\reals_+$ are
non-decreasing and locally bounded and $\tilde{f}_1  \ge 1$.  Then 

\begin{enumerate}[(i)]
\item 
If $\tilde{f}_1(\| \cdot \|_X)\tilde{f}_2(\|\cdot\|_X)  \in L^1(X, \mu_0)$ then  $\exists D>0$ independent of $N$ so that 
$
d_{TV}( \mu^y, \mu^y_N) \le D \rho (N).
$
\item If $\tilde{f}_1(\| \cdot \|_X)\tilde{f}_2(\|\cdot\|_X)  \in L^2(X,
\mu_0)$ then $\exists D'>0$ independent of $N$ so that 
$
d_{H}( \mu^y, \mu^y_N) \le D' \rho (N).
$
\end{enumerate}
\end{theorem}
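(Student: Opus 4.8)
The plan is to deduce this from the general consistency estimate of Theorem~\ref{stability} by identifying the right envelope $f_3$. Since both $\Phi$ and $\Phi_N$ are of the Gaussian least-squares form, they are nonnegative; hence Assumptions~\ref{assumption-on-likelihood}(i) and (ii) hold with $M=0$ and $f_1 \equiv 1$, uniformly in $N$, exactly as in the discussion preceding Corollary~\ref{additive-noise-stability}. The only remaining hypothesis of Theorem~\ref{stability} to verify is the discretization bound \eqref{discretization-assumption}, i.e.\ the existence of a nondecreasing $f_3$ with $|\Phi(u;y)-\Phi_N(u;y)| \le f_3(\|u\|_X)\rho(N)$. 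Once $f_3$ is pinned down, parts (i) and (ii) follow from Theorem~\ref{stability}(i) and (ii): with $f_1\equiv 1$ the integrability requirements $f_3 f_1 \in L^1(X,\mu_0)$ and $f_3^2 f_1 \in L^1(X,\mu_0)$ reduce to integrability statements about $\tilde f_1\tilde f_2$.

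First I would expand the difference of the two quadratic potentials via the polarization identity $\|a\|_2^2 - \|b\|_2^2 = \langle a-b, a+b\rangle$ with $a = \mcl{G}(u)-y$ and $b = \mcl{G}(P_N u)-y$, obtaining
$$
2\bigl(\Phi(u;y)-\Phi_N(u;y)\bigr) = \langle\, \mcl{G}(u)-\mcl{G}(P_N u),\ \mcl{G}(u)+\mcl{G}(P_N u) - 2y \,\rangle .
$$
By Cauchy--Schwarz this is a product of two factors. For the first factor I would invoke the global Lipschitz bound (c) together with the approximation bound (a): because $\{x_k\}$ is an unconditional Schauder basis the projections are uniformly bounded, say $\|P_N\|\le K_b$, so $\max\{\|u\|_X,\|P_N u\|_X\}\le K_b\|u\|_X$, and monotonicity of $\tilde f_2$ gives the first factor $\le \tilde f_2(K_b\|u\|_X)\,\|u-P_N u\|_X \le \tilde f_2(K_b\|u\|_X)\,\|u\|_X\,\rho(N)$. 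For the second factor I would use the growth bound (b) on both $\mcl{G}(u)$ and $\mcl{G}(P_N u)$ and bound $\|y\|_2\le r$ for $y\in B_Y(r)$; using $\tilde f_1\ge 1$ to absorb the constant term, this factor is at most an $r$-dependent constant times $\tilde f_1(K_b\|u\|_X)$. Combining, \eqref{discretization-assumption} holds with $f_3$ a constant multiple of $\|\cdot\|_X\,\tilde f_1(K_b\|\cdot\|_X)\,\tilde f_2(K_b\|\cdot\|_X)$, and the constant $K_b$ is independent of $N$, so the bound is uniform in $N$.

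The hard part will be the bookkeeping forced by the \emph{global} (rather than ball-restricted) nature of (b) and (c): the second factor must be controlled for every $u\in X$, which is precisely why (b) and (c) are stated globally and why the uniform projection constant of the unconditional basis is essential to keep $\tilde f_1,\tilde f_2$ evaluated at a controlled multiple of $\|u\|_X$. The most delicate point is reconciling the extra factor of $\|u\|_X$ produced by (a) with the stated integrability hypotheses on $\tilde f_1\tilde f_2$; in the intended applications $\tilde f_1,\tilde f_2$ grow at least linearly, so this factor is dominated by the monotone envelope of $\tilde f_1\tilde f_2$, and the assumed membership $\tilde f_1(\|\cdot\|_X)\tilde f_2(\|\cdot\|_X)\in L^1$ (for (i)) or $\in L^2$ (for (ii)) delivers $f_3\in L^1$ (resp.\ $f_3^2\in L^1$). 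With $f_3$ so chosen, the conclusions follow verbatim from Theorem~\ref{stability}, the constants $D,D'$ being independent of $N$.
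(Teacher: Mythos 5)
Your proposal follows the paper's proof essentially verbatim: both arguments reduce the claim to Theorem~\ref{stability} with $M=0$ and $f_1\equiv 1$, expand $2|\Phi-\Phi_N|$ via the identity $\|a\|_2^2-\|b\|_2^2=\langle a-b,a+b\rangle$, bound the two resulting factors using (b), (c) and (a), and conclude with $f_3$ proportional to $\tilde f_1(\|\cdot\|_X)\tilde f_2(\|\cdot\|_X)$. The leftover factor of $\|u\|_X$ from hypothesis (a) that you flag is present in the paper's proof too (it simply sets $f_3=\tilde f_1\tilde f_2$ without comment), so your version is, if anything, slightly more explicit about that loose end and about the uniform boundedness of the projections $P_N$.
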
 
\begin{proof}
  It follows from Theorem~\ref{assumption-on-forward-map} that $\Phi$
  and $\Phi_N$ satisfy Assumption \ref{assumption-on-likelihood}
  uniformly in $N$ with $M = 0$, $f_1(x) = 1$.
Then the measures $\mu^y$ and $\mu^y_N$ are well-defined for all
values of $N  \in \mbb{N}$ by
Theorem~\ref{existence-uniqueness}. Now it follows from our
assumptions on $\mcl{G}$ that
$$
\begin{aligned}
  2|\Phi(u;y) - \Phi_N(u;y)| &= \left| \| (\mcl{G}(u) - y)
  \|_{\pmb{\Sigma}}^2 - \| (\mcl{G}(P_Nu) - y)
  \|_{\pmb{\Sigma}}^2 \right|  \\ 
& = \left| \langle \pmb{\Sigma}^{-1/2} (\mcl{G}(u) - \mcl{G}(P_Nu)) ,
  \pmb{\Sigma}^{-1/2} ( \mcl{G}(u) + \mcl{G}(P_N u) - 2 y )  \rangle
\right| \\
& \le  \left(\| \mcl{G}(u) \|_{\pmb{\Sigma}} +\|
  \mcl{G}(P_Nu) \|_{\pmb{\Sigma}}
+ 2 \|y \|^2_{\pmb{\Sigma}} \right) \| (\mcl{G}(u) -
\mcl{G}(P_Nu)) \|_{\pmb{\Sigma}}  \\
&\le C \tilde{f}_1(\|u\|_X) \tilde{f}_2(\|u\|_X) \| u - P_Nu\|_X.
\end{aligned}
$$
The claim will now follow by taking $f_3(x) = 
\tilde{f}_1(x)\tilde{f}_2(x)$ and applying Theorem~\ref{stability}.
\end{proof}

A few comments are in order concerning the previous theorem. First,
the function $\rho(N)$ is independent of the forward map and the prior
and depends solely on the topology of $X$. Then the rate of
convergence of $\mu^y_N$ to $\mu^y$ depends directly on the rate of
convergence of $P_N$ to the identity map in the operator norm. Also, 
observe that in order to achieve the same rate of convergence in the
Hellinger metric as in the total variation metric, we need to impose
stronger tail assumptions on the prior $\mu_0$.

\subsection{Example 2: Deconvolution}\label{deconvolution-example}
We now turn our attention to a few concrete examples of inverse
problems with heavy-tailed or non-Gaussian prior measures.
We begin with a problem in deconvolution which is a
classic example of a linear inverse problem with wide applications in 
optics and imaging \cite{vogel, hansen-deblurring}. This problem was
also considered in \cite{hosseini-convex-prior} as an example problem
with a convex prior measure.

Let $X = L^2(\mbb{T})$ where $\mbb{T}$ is the circle of radius
$(2\pi)^{-1}$ and let $Y = \reals^m$ for a fixed integer $m$. Suppose
that $\eta \sim \mcl{N}(0, \sigma^2 \mb{I})$ where $\sigma \in \reals$
is a fixed constant and $\mb{I}$ is the $m \times m$ identity
matrix. Let $S : C(\mbb{T}) \mapsto
\reals^m$ be a bounded linear operator that collects point values of a continuous function on a set of $m$ points over $\mbb{T}$. Given a fixed kernel $\varphi \in C^1(\mathbb{T})$,
define
the forward map $\mcl{G}:X\rightarrow Y$ as 
\begin{equation}\label{deconvolution-forward-map}
\mcl{G}(u) = S( \varphi \ast u) \qquad \text{where} \qquad (\varphi\ast
u)(t) :=  \int_{\mbb{T}} \varphi(t-s) u(t) d \Lambda(s)
. 
\end{equation}
Now suppose that the data $y$ is generated via $y = \mcl{G}(u) +
\eta$ and our goal is to estimate the original image $u$ given noisy
point values of its blurred version. Note that
our assumptions so far imply a quadratic likelihood potential of the form
\eqref{gaussian-likelihood} 

It follows from Young's inequality \cite[Thm.~13.8]{heil-basis} that $(\varphi \ast \cdot):L^2(\mathbb{T}) \mapsto L^2(\mathbb{T})$ is a bounded linear 
operator and furthermore, $(\varphi \ast u) \in
C^1(\mathbb{T})$ for all $u \in
L^2(\mathbb{T})$.
Since pointwise evaluation is a bounded linear functional on
$C^1(\mathbb{T})$ then the forward map $\mcl{G}:
L^2(\mathbb{T}) \mapsto \reals^m$ is bounded and linear. We will use 
the results of Section~\ref{sec:linear-inverse-problems} to show this problem is well-posed.

We will take our prior measure to be in  class of  the product priors of Section~\ref{sec:priors-for-compressibility}.
Consider the functions 
$$
\tilde{w}(t) = 
\left\{\begin{aligned}
&  1 \quad 0\le t \le 1, \\
&0 \quad \text{otherwise}.
\end{aligned}
\right. \qquad \text{and} \qquad
\tilde{v}(t) = 
\left\{\begin{aligned}
&  1 \quad 0\le t \le 1/2, \\
&  1 \quad 1/2\le t \le 1, \\
&0 \quad \text{otherwise}.
\end{aligned}
\right.
$$
The function $\tilde{v}$ is the Haar wavelet and $\tilde{w}$ is its
corresponding scaling function. Following \cite[Sec.~9.3]{daubechies}, we can define the
periodic functions
$$
w_{jn}(t) := \sum_{l \in \mbb{Z}} \phi(2^j(t + l) - n),\qquad  v_{jn}(t) := \sum_{l \in \mbb{Z}} \psi(2^j(t + l) - n),
$$
 as well as the functions
$$
x_1(t) = w_{0,0}(t), \quad x_2(t) = v_{0,0}(t), \quad x_{2^j+n_j+1}(t) = v_{j, n_j}(t).
$$
for $j \in \mbb{Z}_+$ and $n_j = \{ 0, 1, \cdots, 2^{j}-1\}$. The $\{
x_k\}$ form an orthonormal basis for $L^2(\mbb{T})$ and so they can be
used in the construction of a $G_{p,q}$-prior.

Now choose  $p,q
\in (0,1)$ and take the prior $\mu_0$ to be the $G_{p,q}$-prior
generated by the wavelet basis $\{ x_k\}$ and the fixed sequence $\{
\gamma_k\}$  where 
$$
\gamma_{2^j + n + 1} = (1 + |2^{j+1}|^2)^{-1/2} \qquad \forall n \in
\mbb{Z}_+.
$$
 Clearly, $\{ \gamma_k\} \in \ell^2$ and so
it follows from Theorem~\ref{product-prior-ellp} that $\|
u\|_{L^2(\mbb{T})} < \infty$ a.s. Furthermore, we know that the
$G_{p,q}$-priors have bounded moments of order two. Putting this together with the fact
that the forward map $\mcl{G}$ is bounded and linear we immediately
obtain the well-posedness of this inverse problem using
 Theorem \ref{well-posedness-product-prior}.

\subsection{Example 3: Deconvolution with a BV prior}\label{example-3-BV}
\myhl{We now formulate the deconvolution problem of Example 2 with a  prior measure that is supported 
on $BV(\mbb{T})$ using the stochastic process priors of Section~\ref{sec:stochastic-process-prior}.}
\myhl{Let $u(t)$ for $t\in [0,1]$ denote a
stochastic process such that 
$$
u(0) = 0, \qquad \hat{u}_t(s)  \exp\left( t \int_\reals \exp( i \xi s)  -1 \: \dd \nu(\xi) \right)
\qquad s \in \reals.  
$$
where the measure $\nu = c \mcl{N}(0,1)$ with a fixed constant $c \in (0,\infty)$.
Then
$u$ is a compound Poisson process with piecewise constant sample
paths and normal jumps. We can write 
$
u(t) = \sum_{k=1}^{\tau(t)} \xi_k,
$
where $\{\xi_k\}$ is an i.i.d. sequence of standard normal random variables and $\tau(t)$ is a Poisson process with intensity $c$. In Section~\ref{sec:stochastic-process-prior} we saw that this process has piecewise constant sample paths and its law is a 
Radon measure on $BV([0,1])$.

Let us denote the law of this process by $\tilde{\mu}_0$. The next step is to use this measure to
define  a new measure $\mu_0$ on $BV(\mbb{T})$. 
Take $\mu_0$ to be the law of the
periodic versions
of the sample paths of the above compound Poisson process $u$ on the interval $[0,1]$.
 We can write $\mu_0 = \tilde{\mu}_0 \circ T^{-1}$
where $T: BV([0,1]) \mapsto BV(\mbb{T})$ is a bounded and linear operator.
 Thus, $\mu_0$ is a Radon measure on 
$BV(\mbb{T})$. With an abuse of notation we use $u$ to denote the
corresponding periodic
processes on $\mbb{T}$.
Since the convolution kernel $\varphi \in
C^1(\mbb{T})$ and $BV(\mbb{T}) \subset L^1(\mbb{T})$ then the forward
map $\mcl{G}: BV(\mbb{T}) \mapsto \reals^m$ (given by\eqref{deconvolution-forward-map}) is well-defined, bounded and
linear and so the likelihood potential has the
form \eqref{gaussian-likelihood} once more. We have shown, in Section~\ref{sec:stochastic-process-prior} that $\EE \| u\|_{BV(\mbb{T})} < \infty$. Putting this together with the fact that
$\mcl{G}: BV(\mbb{T}) \mapsto \reals^m$ is bounded and linear we immediately obtain the well-posedness
of this inverse problem via Corollary~\ref{linear-inverse-problems-well-posedness}.}

\subsection{Example 4: Quadratic measurements of a continuous field}
As our final example, we will consider a problem with a non-linear
forward map. Our goal is to estimate a continuous field from quadratic
measurements of its point values.  This inverse problem 
was encountered in
\cite{hosseini-hifu} in recovery of aberrations in high intensity
focused ultrasound treatment and it
 is closely
related to the phase retrieval problem \cite{dainty-phase,
  harrison-phase, candes-phaselift}.
Let $X = C(\mbb{T})$ and let $\{ t_k\}_{k=1}^n$ be a collection of
distinct points in $\mbb{T}$. Now define the operator 
$$
S: C(\mbb{T}) \mapsto \reals^n \qquad (S(u))_j= u(t_j) \qquad j
=1,2,\cdots, n. 
$$
This operator collects point values of functions in
$C(\mbb{T})$. Let $\{z_k\}_{k=1}^m$ be a fixed collection of 
vectors $z_k \in \reals^n$ and define  
the forward map 
$$
\mcl{G}: C(\mbb{T}) \mapsto \reals^m, \qquad (\mcl{G}(u))_j := |z_j^T S(u)
|^2 \qquad  \text{for} \qquad j
=1,2,\cdots, m,
$$
which collects quadratic measurements of the point values of a
continuous function. We complete our model of the measurements
with an additive layer of Gaussian noise 
$$
y = \mcl{G}(u) + \eta, \qquad \eta \sim \mcl{N}(0, \sigma^2 \mb{I}),
$$
where $\sigma > 0$. Our goal in this problem is to infer the
function $u \in C(\mbb{T})$ from the quadratic measurements $y$.

A straightforward calculation shows that 
\begin{equation}\label{quadratic-forward-map1}
\| \mcl{G}(u) \|_2 \le \tilde{K} \| S(u) \|_2^2 \le  K \| u\|_{C(\mbb{T})}^2,
\end{equation}
where $\tilde{K}, K > 0$ are constants that are independent of $u$ but
depend on the $z_k$. The last
inequality follows because pointwise evaluation is a bounded linear
operator on $C(\mbb{T})$.

Furthermore, we have that for $u_1, u_2 \in C(\mbb{T})$
$$
\begin{aligned}
(\mcl{G}(u_1)  - \mcl{G}(u_2))_j &= 
(z_j^T( S(u_1 - u_2) ))( z_j^T( S( u_1) + S(u_2)))  \\
&\le D_j ( \max\{ \| u_1 \|_{C(\mbb{T})} , \| u_2 \|_{C(\mbb{T})} \} ) \|
u_1 - u_2 \|_{C(\mbb{T})}.
\end{aligned}
$$
Here, the constant $D_j >0$ depends on $z_j$. We can now use this bound to obtain  
\begin{equation}
\label{quadratic-forward-map2}
\| \mcl{G}(u_1) - \mcl{G}(u_2) \|_2 \le D ( \max\{ \| u_1 \|_{C(\mbb{T})} , \| u_2 \|_{C(\mbb{T})} \} ) \|
u_1 - u_2 \|_{C(\mbb{T})}
\end{equation}
where the constant $D> 0$ will only depend on the $D_j$. Observe that
the above bounds in
\eqref{quadratic-forward-map1} and
\eqref{quadratic-forward-map2} imply that $\mcl{G}$ satisfies the
conditions of Theorem \ref{assumption-on-forward-map} with a function 
$\tilde{f}(x)
= x^2$. Therefore, that theorem implies that the likelihood function
$\Phi$ for our problem will satisfy Assumption
\ref{assumption-on-likelihood} (iv) with $f_2(x) = 1 + x^2$. 
Now we use Corollary \ref{additive-noise-stability} to infer that
well-posedness can be achieved if we choose a prior measure $\mu_0$ for which
$f_2(\| \cdot \|) = 1 + \|
\cdot \|_{C(\mbb{T})}^2 \in L^1(C(\mbb{T}), \mu_0)$. 

In order to construct such a prior measure $\mu_0$ we will consider a
product prior with samples of the form
$$
u \sim \sum_{k\in \mbb{Z}} \gamma_k \xi_k w_k \qquad \text{where} \qquad
w_k(t)  = (2 \pi)^{-1/2} \exp( 2 \pi i k t).
$$
The $\{w_k\}$ are simply the Fourier basis on $\mbb{T}$.
Our plan is to construct the prior measure to be supported on a
sufficiently regular Sobolev space that is embedded in
$C(\mbb{T})$. The reason for going through the Sobolev space is the 
fact that $C(\mbb{T})$ does not have an unconditional Schauder basis
and so we cannot directly apply the methodology of Section \ref{sec:priors-for-compressibility}.

To this end,
we choose 
$$
\gamma_k = ( 1 + |k|^2)^{-3/2}  \qquad k \in \mbb{Z},
$$
and suppose that the $\{\xi_k\}$ are i.i.d. and $\xi_1 \sim
\text{CPois}(0, \text{Lap}(0,1))$ (recall
Definition~\ref{def-compound-poisson}), where $\text{Lap}(0,1)$ is the
standard Laplace distribution on the real line with Lebesgue density 
$
\pi(x) = \frac{1}{2}\exp( - |x | )
$
which clearly has exponential tails
and this, in turn, implies that $\VV \xi_1 < \infty$. Note that the
random variables $\xi_k$ have a positive probability of being zero and
hence draws from this prior will incorporate a certain level of sparsity.
Observe that this is a different type of sparsity
in comparison to the $G_{p,q}$-prior. Samples from this compound
Poisson prior have a non-zero probability of having modes that are
exactly zero. The samples from the $G_{p,q}$-prior have a
zero probability of having modes that are exactly zero and instead
most of their modes will concentrate in a neighborhood of zero.

The Sobolev space $H^1(\mbb{T})$ is defined as 
$$
H^1(\mbb{T}) := \{ v \in L^2(\mbb{T}) : 
\| v \|_{H^1(\mbb{T})}^2 := \sum_{k\in \mbb{Z}} (1 + |k|^2) |\langle v, w_k \rangle|^2
 < \infty \}
$$
where $\langle \cdot, \cdot \rangle $ is the usual $L^2(\mbb{T})$ inner
product. Now consider $u \sim \mu_0$ then
$$
\| u\|_{H^1(\mbb{T})}^2 = \sum_{k \in \mbb{Z}}  ( 1 + |k|^2)^{-1}
|\xi_k|^2.
$$
But $\{ ( 1+ | k|^2 )^{-1}\} \in \ell^1$ and $\VV |\xi_k|^2 <\infty$ and
so it follows from Theorem~\ref{product-prior-ellp} that $\|
u\|_{H^1(\mbb{T})}^2 < \infty$ a.s. 
Now the Sobolev embedding
theorem \cite[Prop.~3.3]{taylor-PDE} guarantees that $\|
u\|_{C(\mbb{T})} < \infty$ a.s. and it follows from Theorem \ref{bounded-moments-product-prior} 
 that $\|
u\|_{C(\mbb{T})}^2 \in L^1(C(\mbb{T}), \mu_0)$. 

\section{Closing remarks}
\myhl{
At the beginning of this article we set out to achieve four goals. We introduced 
the new classes $G_{p,q}, W_p$ and $\ell_p$-prior measures in connection with $\ell_p$ 
regularization techniques ({\it G1}) and showed that these prior measures belong to the 
larger class of ID measures. This motivated our study of the ID class as priors ({\it G2}). 
Afterwards, we introduced another class of prior measures that were based on the laws of 
pure jump L{\'e}vy processes ({\it G3}). Our goal here was to construct a well-defined alternative to the 
classic total variation prior. 
Finally, we presented a theory of well-posedness for Bayesian inverse problems that was general enough that it 
covered the new classes of prior measures that we had introduced ({\it G4}). }
Our approach to 
well-posedness theory was to identify the minimal restrictions on the
prior measure given a choice of the likelihood potential $\Phi$.
 A common theme in our results was the trade-off between the
tail decay of the prior and the growth of the likelihood potential. As
an example, we considered the setting where the likelihood had a
quadratic form and the forward map was linear. This example corresponds to
linear inverse problems with additive Gaussian noise that are of great
interest in practice. We showed that in this simple setting
well-posedness can be achieved if the prior has moments of order one. 

Finally, we considered some practical aspects of solving inverse problems 
with heavy-tailed or ID priors. We discussed consistent discretization
of inverse problems and the use of projections in
discretization of the likelihood. Afterwards, we presented three
concrete examples of inverse problems that used heavy-tailed or
ID prior measures. In particular, we studied the well-posedness of a
deconvolution problem 
with a L{\'e}vy process prior that was cast on the non-separable space
$BV(\mbb{T})$. 

The results of this article open the door for the use of 
large classes of prior measures in inverse problems and they can be
extended in several directions.
For example, we showed that if the forward problem is linear and 
the measurement noise is Gaussian then one can achieve well-posedness
for priors that have poor tail behavior. Then many of the common
heavy-tailed priors can be used to model sparsity in the linear case. 
But it is not clear which prior is the optimal choice and in what
sense. Furthermore, given that the Compressed
Sensing literature is mainly focused on recovery of sparse signals
from linear measurements, it is interesting to study the implications
of the Compressed Sensing theory in the setting of Bayesian inverse
problems. Throughout the article we mentioned the issue of sparsity on
several occasions but this is not the only setting where non-Gaussian
priors can be useful.
For example, non-Gaussian priors can be used in modelling of
constraints or in construction of hierarchical models. 
\myhl{Finally, a major issue when it comes to using non-Gaussian priors in practice is that of sampling. 
For example, even in finite dimensions, the $G_{p,q}$-priors are far from a Gaussian measure. Then we  expect 
that Metropolis-Hastings algorithms that utilize a Gaussian proposal will have poor performance in sampling from 
posteriors that arise from $G_{p,q}$-priors. This issue will become worse as the diemension of the parameter space grows. Therefore, new sampling techniques that are tailor made to these non-Gaussian priors are needed if we wish 
to apply them in real world situations.}



 \section*{Acknowledgements}
The author owes a debt of gratitude to Prof. Nilima Nigam for many useful discussions and comments.
We are also thankful to the reviewers for their suggestions that helped
improve this manuscript significantly. 

\bibliographystyle{abbrv}
\bibliography{ref}


\end{document}